\documentclass{amsart}
\usepackage[utf8]{inputenc}
\usepackage{amsmath, amsthm, amssymb, amsfonts}
\usepackage{bm}
\usepackage{dsfont}
\usepackage[utf8]{inputenc}
\usepackage{rotate}
\usepackage{amsthm}

\usepackage{tikz}
\usepackage{tikz-cd}
\usepackage[arrow,matrix,curve]{xy} 	
\usepackage{xcolor}
\usepackage{todonotes}
\usepackage{alltt} 
\usepackage{amsmath,amssymb}
\usepackage{url}
\usepackage{todonotes}
\usepackage[margin=1in]{geometry}

\usepackage{cleveref}

\usepackage{comment}

\usetikzlibrary{arrows.meta,calc,positioning}

\theoremstyle{plain}
\newtheorem{theorem}{Theorem}[section]
\newtheorem{proposition}[theorem]{Proposition}
\newtheorem{lemma}[theorem]{Lemma}
\newtheorem{corollary}[theorem]{Corollary}

\theoremstyle{definition}
\newtheorem{definition}[theorem]{Definition}

\newtheorem{remark}[theorem]{Remark}
\newtheorem{example}[theorem]{Example}

\newtheorem{specialtheorem}{Theorem}

\newcommand{\er}{\mathfrak{r}}
\newcommand{\uker}{\mathrm{uker}}

\newcommand{\I}{\mathrm{Id}}

\newcommand{\Id}{{\rm{Id}}}

\newcommand{\M}{{\rm{M}}}

\newcommand{\RR}{{\mathbb{R}}}
\newcommand{\ZZ}{{\mathbb{Z}}}
\newcommand{\NN}{{\mathbb{N}}}
\newcommand{\QQ}{{\mathbb{Q}}}

\DeclareMathOperator{\Tail}{Tail}
\DeclareMathOperator{\Per}{Per}

\title{Dimensions of Orbit Closures and Discrepancy for Dynamical $p$-adic Sequences}
\author{Keivan Mallahi Karai \and Christian Wei\ss{}}
\date{\today}

\begin{document}

\begin{abstract}
Classical discrepancy quantifies the irregularity of the distribution of a sequence in the unit interval. In this paper, we study the analogous notion for sequences in the ring of $p$-adic integers with a focus on the dynamically generated sequences.  We prove that the orbits of ergodic $1$-Lipschitz self-maps of $\ZZ_p^d$ attain the optimal order of discrepancy and hence form low-discrepancy sequences. We also obtain bounds on the growth of the size of orbits of polynomial self-maps of $f: \ZZ_p^d \to \ZZ_p^d$ modulo $p^n$ for $d>1$. As a consequence, we show that orbit closures of $f$ have box dimension either zero or one. Our approach relies on the introduction of strong fixed points for such maps, together with several decomposition results for matrices over $\ZZ_p$.

\end{abstract}
\maketitle

\section{Introduction and statement of results}
In the classical setting, a sequence in the unit interval is uniformly distributed if the proportion of its terms lying in any subinterval converges to the length of that subinterval. Independent samples from the uniform distribution have this property almost surely, but many deterministic sequences are uniformly distributed as well. Discrepancy theory refines this qualitative notion by quantifying the error in these approximations and thus measuring how closely a deterministic sequence mimics uniform random sampling. Schmidt’s celebrated lower bound \cite{Sch72} determines the optimal asymptotic order of discrepancy, motivating the study of low-discrepancy sequences, whose discrepancy attains this order. 

The study of uniformly distributed sequences has a long history, beginning with the work of Kronecker and Weyl on sequences of the form
$$
\bigl({f(n)}\bigr)_{n\in\NN},
$$
where $f\in\RR[x]$ has at least one nonconstant irrational coefficient and $\{\cdot \}$ denotes the fractional part. In the linear case, these are known as Kronecker sequences. Other classical constructions—including the van der Corput sequence, Halton sequences, and more general digital sequences—yield explicit deterministic sequences that are not only uniformly distributed but also have low discrepancy; see, for example, \cite{KN74}. Real low-discrepancy sequences can also be constructed using methods from dynamical systems; see, for instance, \cite{GrabnerHellekalekLiardet2012}. 

Uniform distribution and discrepancy have natural analogues for sequences in the ring of $p$-adic integers. Recall that $\ZZ_p$ may be viewed as the space of infinite base-$p$ digit strings, equipped with its compact topology and normalized Haar measure, or equivalently as the completion of $\ZZ$ with respect to the $p$-adic metric. In this setting, discrepancy measures how evenly a sequence visits residue classes modulo $p^k$, for $k\geq 1$, or, equivalently, how uniformly its terms are distributed among finite digit prefixes.

In this paper we study such questions for sequences defined dynamically. Formally, let $X=\mathbb{Z}_p^d$ (or $[0,1]^d$), and let $f: X \to X$ be a continuous function. Given an initial point $x_0=x\in X$, the associated dynamical sequence to $(f, x_0)$ is defined recursively by
\[ x_{n+1}=f(x_n), \quad n \ge 0.\]
We are interested in studying situations in which sequences $(x_n)_{n\ge 1}$ defined as above have low-discrepancy.  

A map on $\ZZ_p$ produces at every level $k$ a finite dynamical system modulo $p^k$. The discrepancy of an orbit measures how evenly these finite quotients are covered, uniformly over the levels  $k$. In this sense, $p$-adic discrepancy may be viewed as a quantitative form of uniform distribution for profinite dynamical systems: it asks not only whether an orbit is uniformly distributed, but how efficiently it explores the increasingly fine residue towers of $\ZZ_p$.

In this article, we will show that, when $X= \ZZ_p^d$, ergodic $1$-Lipschitz maps attain the optimal ($p$-adic) discrepancy bound $O(1/N)$. We establish a direct connection in $\ZZ_p^d$ between the qualitative property of ergodicity and the quantitative property that orbits form low-discrepancy sequences. This is indeed surprising as a qualitative property (ergodicity) implies a subtler quantitative property (low-discrepancy orbits). 

\begin{specialtheorem} \label{thm:ergodic:1d}
Let $f: \mathbb{Z}_p^d \to \mathbb{Z}_p^d$ be a measure-preserving $1$-Lipschitz function. Then the following are equivalent:
\begin{enumerate}
\item $f$ is ergodic. 
\item $f$ is uniquely ergodic. 
\item Every orbit is uniformly distributed. 
\item For every $x_0 \in \mathbb{Z}_p^d$ the sequence $(x_n)$ defined by the orbit
\[
x_n:= f^{(n)}(x_0) 
\]
is a low-discrepancy sequence.
\end{enumerate}
\end{specialtheorem}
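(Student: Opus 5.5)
The plan is to reduce everything to the finite dynamical systems $f_k := f \bmod p^k$ on $(\ZZ/p^k\ZZ)^d$. Since $f$ is $1$-Lipschitz, $f$ induces well-defined maps $f_k$ with $f_{k}\circ \pi_k = \pi_k \circ f$. Since $f$ is measure-preserving, each $f_k$ is a bijection of $(\ZZ/p^k\ZZ)^d$. The key intermediate claim is the standard characterization: $f$ is ergodic if and only if $f_k$ is transitive, i.e.\ a single cycle of length $p^{kd}$, for every $k\ge 1$. I will prove this first.

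\emph{Transitive implies ergodic.} If every $f_k$ is one cycle, then any invariant set, approximated by unions of balls of radius $p^{-k}$, must have measure $0$ or $1$. \emph{Ergodic implies transitive.} If some $f_k$ had a cycle $C$ that is a proper subset, then $\pi_k^{-1}(C)$ would be an $f$-invariant union of cosets of measure $|C|p^{-kd}\in(0,1)$.

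With this in hand, the implications go as follows. (2)$\Rightarrow$(1) is trivial. For (1)$\Rightarrow$(4), fix $x_0$ and a ball $B=a+p^k\ZZ_p^d$; the discrepancy is taken over such balls, as in the $p$-adic definition. The residues $\pi_k(x_n)$ run periodically through all $p^{kd}$ classes, each exactly once per period. Hence for every $N$,
\[
\bigl|\#\{0\le n<N : x_n\in B\} - N p^{-kd}\bigr| \le 1 ,
\]
uniformly in $k$ and $a$. This gives discrepancy $D_N\le 1/N$, the optimal order. (4)$\Rightarrow$(3) is immediate, since low discrepancy forces uniform distribution because balls generate the topology.

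For (3)$\Rightarrow$(2), I would use the standard argument. If every orbit is equidistributed with respect to Haar measure $\mu$, then for continuous $g$ the averages $\frac1N\sum g(f^n x)$ converge to $\int g\,d\mu$ for all $x$. Integrating against any invariant measure $\nu$ and applying dominated convergence yields $\nu=\mu$, so $f$ is uniquely ergodic. Alternatively, (3)$\Rightarrow$(1) can be shown directly: equidistribution of the orbit of a point of a proper cycle mod $p^k$ fails at level $k$.

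The main obstacle is the careful handling of the discrepancy notion. If the paper's discrepancy ranges over more general sets, such as unions of balls or products of "intervals" built from digit prefixes, rather than single balls, the error bound must be adapted. For a product of ball-type sets with different radii in each coordinate, the set is still a union of classes mod $p^{k}$ for $k$ the finest level. However, the count of classes may be large, so bounding by $1$ per class is not enough. Instead, one should note that such a set is a single coset of a subgroup $H$, namely the preimage of a coset in $(\ZZ/p^{k_1}\times\cdots\times\ZZ/p^{k_d})$. The key point is that $f$ need not cycle through that quotient. I would handle this by using that the full-level single cycle mod $p^{k}$ with $k=\max k_i$ still visits each fiber exactly once per $p^{kd}$ steps. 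I would then decompose $N$ into complete periods plus a remainder and bound the remainder using a hierarchy over levels, yielding an error of order $\sum_i (p-1)$ per level, times the number of levels. If this only gives $O(\log N/N)$, I would revisit the paper's definition, which presumably uses balls, where the bound $1/N$ above is clean.
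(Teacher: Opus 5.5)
Your main argument is correct and follows the paper's proof. Ergodicity is equivalent to $f \bmod p^k$ being a single cycle for every $k$; the paper cites Anashin's criterion, while you sketch it. Each ball $a+p^k\ZZ_p^d$ is then hit exactly once per block of $p^{kd}$ consecutive orbit points, which gives $D_N\le 1/N$. Finally, (3)$\Rightarrow$(2) is the usual Birkhoff-average argument. Your version, which integrates the averages against an arbitrary invariant $\nu$ and uses dominated convergence, is slightly cleaner than the paper's appeal to the ergodic theorem and ergodic decomposition.

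You should drop the contingency in your last paragraph, because no hierarchical bound can work for polydiscs with unequal radii: the statement is false there. Take the paper's own ergodic map $\phi\circ(z\mapsto z+1)\circ\phi^{-1}$ on $\ZZ_2^2$ and the polydisc $\{x_1\equiv 0 \pmod{2^k}\}$, which has measure $2^{-k}$. Set $N=1+\sum_{i=0}^{k-2}2^{2i+1}\approx\tfrac23 4^{k-1}$. Among the orbit points $\phi(0),\dots,\phi(N-1)$, the polydisc contains the $2^{k-1}$ points $\phi(n)$ with $n$ a subset sum of $\{2,8,\dots,2^{2k-3}\}$. The expected count is only $N2^{-k}\approx 2^{k-1}/3$, so $ND_N\gtrsim\sqrt N$. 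Both your argument and the paper's cover only balls, and that is the only reading of the multidimensional discrepancy under which (1)$\Rightarrow$(4) holds.
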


We stress that this is in fact a feature of the $p$-adic integers which does not transfer to the Archimedean setting. To see this, we consider, for example, the Kronecker system
$f_\alpha: \mathbb{R^d}/\mathbb{Z^d} \to \mathbb{R^d}/\mathbb{Z^d}$ defined by
\[
f_\alpha(x)=x+\alpha.
\]
The resulting Kronecker sequence $x_n= \{ n \alpha \}$ is intimately connected with the continued fraction of $\alpha$. If and only if $\alpha$ has bounded partial quotients, then one obtains the optimal (real) discrepancy behavior $O(\log N/N)$. For the remaining Lebesgue almost all $\alpha$, it is known that the slightly weaker bound $O(\log N \log \log N/N)$ holds. We refer the reader to \cite[Section 1.4]{Tichy} for a concise discussion of the discrepancy of Kronecker sequences for general $d \ge 1$. 

Theorem~\ref{thm:ergodic:1d} implies that when $f: \ZZ_p^d \to \ZZ_p^d$ is ergodic, one automatically obtains the quantitative fact that its orbits form low-discrepancy sequences. Conversely, in the absence of ergodicity, one may seek to quantify the failure of uniform distribution. Our next theorem shows that when $d>1$, polynomial maps are not only non-ergodic, but their orbits are extremely far from being uniformly distributed. Recall that a polynomial map $f: \ZZ_p^d \to \ZZ_p^d$ is a function of the form
\[
f(x) = \bigl(f_1(x), \dots, f_d(x)\bigr),
\]
where each $f_j(x)$ is a polynomial in the coordinates $x_1, \dots, x_d$ of $x$, with coefficients in $\ZZ_p$.

This observation, formulated in Theorem~\ref{thm:Hausdorff}, is in stark contrast to the real case, where even multi-dimensional Kronecker sequences $\{ n\alpha_1, n\alpha_2,\ldots, n\alpha_d)\}$ are uniformly distributed, whenever $1,\alpha_1,\ldots,\alpha_d$ are linearly independent, see again \cite{Tichy}.

\begin{specialtheorem}[Box dimension alternative] \label{thm:Hausdorff}
Let $f: \mathbb{Z}_p^d \to \mathbb{Z}_p^d$ be a polynomial map, and let
$x_0 \in \mathbb{Z}_p^d$ be arbitrary. Let
\[
E=\overline{\{f^{(j)}(x_0): j \geq 0\}}
\]
be the closure of the orbit of $x_0$. Then
\[
\dim_B(E) \in \{0,1\}.
\]
\end{specialtheorem}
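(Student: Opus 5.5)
The plan is to reduce the box dimension of $E$ to the growth rate of the number of residue classes modulo $p^k$ met by the orbit. Cover $\ZZ_p^d$ at scale $p^{-k}$ by the $p^{kd}$ cosets of $p^k\ZZ_p^d$. Then $N_k(E)$, the number of such balls meeting $E$, equals the number of classes $x_n \bmod p^k$, since $E$ is the closure of the orbit. Modulo $p^k$ the orbit is eventually periodic, with preperiod $t_k$ and period $\ell_k$, so $N_k(E) = t_k + \ell_k$. Hence
\[
\dim_B(E) = \lim_{k\to\infty} \frac{\log (t_k+\ell_k)}{k\log p},
\]
provided the limit exists, and the theorem reduces to two statements. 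First, either $t_k+\ell_k$ is $p^{o(k)}$, or it is $p^{k+O(1)}$. Second, $t_k+\ell_k$ never grows faster than $C p^k$. Since $f$ has coefficients in $\ZZ_p$, it is $1$-Lipschitz, so reduction modulo $p^{k+1}$ maps the cycle at level $k+1$ onto the cycle at level $k$. Thus $\ell_k \mid \ell_{k+1}$, and the ratio $r_k=\ell_{k+1}/\ell_k$ is the length of a cycle of a fibre map.

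Next I linearize. Set $g=f^{\ell_k}$. It fixes the class $\bar a$ of a periodic point modulo $p^k$. Writing $x=a+p^k y$, Taylor expansion of the polynomial map gives
\[
g(a+p^k y) \equiv g(a) + p^k\, Dg(a)\, y \pmod{p^{k+1}}.
\]
So on the fibre $\cong \mathbb F_p^d$ the map is affine, $y\mapsto Ay+c$, with $A = Dg(a) \bmod p$. By the chain rule, $A$ is a product of derivatives of $f$ along the cycle. Passing to a fixed power $f^{M}$, with $M$ a multiple of the orders of the semisimple parts of all possible reductions $A$ (a finite set of matrices over $\mathbb F_p$, so $M$ exists), I can assume that $A$ is unipotent. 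For the fibre dynamics the relevant iterate of $A$ is then unipotent up to this bounded change. Iterating this at every level shows that $r_k$ is bounded, and that after a bounded number of levels $r_k \in\{1,p\}$ up to the fixed factor $M$. This uses the decomposition results for matrices over $\ZZ_p$ and the strong fixed points mentioned in the abstract. In particular $\ell_k \le C p^k$.

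The key step, and the main obstacle, is the dichotomy: either $r_k=1$ for all large $k$, or $r_k=p$ for all large $k$. This is the $d$-dimensional analogue of the classical one-variable fact that a $1$-Lipschitz polynomial cycle that lifts by a factor $p$ at a ``stable'' level (level $\ge 1$ for $p$ odd, $\ge 2$ for $p=2$) continues to do so. I would introduce a \emph{strong fixed point}: a point $a$ with $f^{L}(a)=a$ exactly in $\ZZ_p^d$, where $L$ is the eventual period. I would argue that if the ratios stabilize at $1$ then, by compactness and a Hensel-type argument applied to $f^{L}-\I$, the nested periodic classes shrink to a genuine periodic point of $\ZZ_p^d$. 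If instead $r_k=p$ infinitely often, I would compare $g^{p}$ with $g$ via the expansion $g^p(a+p^k y)=a+p^{k}(\text{unipotent }A)^p y + \dots$. The valuation of the translation part increases by exactly one per level, as in the one-dimensional computation with $(1+pu)^p$. This forces $r_{k+1}=p$ as well. The danger is that a Jordan block of $A$ could make the displacement degenerate in some directions; I would handle this by projecting onto the direction in which the translation $c$ is not in the image of $A-\I$.

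Finally I must control the preperiods $t_k$. In the periodic-point case, near a strong fixed point the map $f^{L}$ either contracts or is isometric in each direction after the decomposition. I expect the classes of the preperiod to be governed by the valuation of $x_0-a$, giving $t_k = O(k)$, hence $N_k = p^{o(k)}$ and $\dim_B E=0$. In the other case I would bound $t_k \le C p^k$ by the same fibre argument applied along the tail, since the fibre over a preperiodic class contains at most one orbit point. Together with $\ell_k \asymp p^k$ this gives $\dim_B E = 1$. So the limit exists and lies in $\{0,1\}$.
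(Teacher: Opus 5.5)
\textbf{Verdict: genuine gaps.} Your reduction to $N_k(E)=t_k+\ell_k$ and the overall architecture match the paper: linearize on fibres, prove a period dichotomy, and bound the preperiods. However, both load-bearing steps are only asserted.

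\textbf{The preperiod bound is not proved.} In the bounded-period case you need $t_k=p^{o(k)}$ to get $\dim_B E=0$, and you only ``expect'' $t_k=O(k)$ from the valuation of $x_0-a$. In the other case, the reason you give is that the fibre over a preperiodic class mod $p^k$ carries at most one orbit point mod $p^{k+1}$. That only shows the old preperiodic classes contribute $t_k$ at level $k+1$. It says nothing about $t_{k+1}-t_k$, the new preperiodic steps occurring \emph{above periodic classes}, which is the whole difficulty.

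The missing ingredient is a bound on how long an orbit that is already periodic mod $p^n$ takes to become periodic mod $p^{n+1}$. The paper proves $t_{n+1}\le t_n+dr$ with $r=\Per_{f_1}(x)$:
\begin{itemize}
\item It tracks $u_j=(x_{j+T}-x_j)/p^n \bmod p$, which satisfies $u_{j+1}=A_ju_j$ with $A_j=Df(x_j)\bmod p$ periodic of period $r$.
\item After $rd$ steps, $u$ lies in $W=\operatorname{Im}(B^d)$, on which the one-period product $B$ is invertible.
\item So the fibre map $t\mapsto B^qt+u_k$ of $f^T$ permutes $W$, and $x_{m+rd}$ is already periodic mod $p^{n+1}$.
\end{itemize}
A cruder argument would also do: affine self-maps of $\mathbb F_p^d$ have tails of length at most $d$, giving $t_{k+1}\le t_k+d\ell_k$. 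But some such argument is required.

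\textbf{The period dichotomy is not proved, and its stated normalization is false.} You propose to pass to $f^M$ so that $A$ is unipotent. This fails whenever $Df$ is singular mod $p$ along the cycle, since no power of a singular matrix is unipotent. The correct normal form is $A^m\equiv S\pmod p$ with $S^2=S$.

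More seriously, persistence of the factor $p$ is a computation modulo $p^{k+2}$. With $h=g^p$ and $k\ge 2$, one has $h(a)-a\equiv p^k\bigl(\sum_{i<p}A^i\bigr)c \pmod{p^{2k}}$. You must know $\sum_{i<p}A^i$ modulo $p^2$, and show it acts as $p$ times a unit on $\operatorname{Im}S$ \emph{without} the $\ker S$-component of $c$ leaking into $\operatorname{Im}S$.

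If $A\equiv S+pM\pmod{p^2}$ with $S^2=S$ but $SM\neq MS$, a direct computation gives
$S\bigl(\sum_{i<p}A^i\bigr)(\I-S)\equiv -pSM(\I-S)\pmod{p^2}$.
This term can cancel $Sc$ modulo $p$. Your plan to ``project onto the direction where $c\notin\operatorname{Im}(A-\I)$'' names the right subspace, but it gives no control over this cross term.

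This is why the paper needs three pieces you do not supply:
\begin{itemize}
\item Lemma~\ref{lem:decom}: a Hensel-type lift of the idempotent through rational functions of $A^m$, so that $A^m=S+pM$ with $SM=MS$.
\item Strong fixed points defined by the mod-$p^2$ condition $Df(x)\equiv S+pM$. This condition is stable under mod-$p^2$ perturbation and along orbits. Your genuine periodic points, by contrast, do not exist in the unbounded case.
\item The identity $\sum_{i<p}(S+pM)^i\equiv(\I-S)+p(S+M-SM)$, which equals $p\,\I$ on $\operatorname{Im}S$ for odd $p$.
\end{itemize}
For $p=2$ the relevant sum is $\I+A$, and one needs $Dh\equiv S\pmod 4$, obtained by squaring. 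Your sketch does not address this case.
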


Here and throughout, $\dim_B$ denotes the box dimension; see Section~\ref{boxdim} for its definition. Theorem~\ref{thm:Hausdorff} shows that, in any ambient dimension, polynomial dynamical sequences are asymptotically at most one-dimensional. In fact, Theorem~\ref{thm:Hausdorff} follows from a more precise, non-asymptotic result, which we now state.

For each $n\geq 1$, let
$$
f_n:(\ZZ/p^n\ZZ)^d\longrightarrow(\ZZ/p^n\ZZ)^d
$$
denote the map induced by $f$. We write $\Per_{f_n}(x)$ for the period of the eventual periodic part of the $f_n$-orbit of $x$, and $\Tail_{f_n}(x)$ for the length of its preperiodic part; see Section~\ref{sec:prelim} for the precise definitions. We have the following theorem.

\begin{specialtheorem}[Effective one-dimensionality of orbits of polynomial maps]\label{them:finitary}
Let $f: \mathbb{Z}_p^d \to \mathbb{Z}_p^d$ be a polynomial map with coefficients in $\ZZ_p$, and let
$x \in \mathbb{Z}_p^d$ be arbitrary. Then we have 

\begin{enumerate}
\item Either there exists a constant $C_0$ such that $ \Per_{f_n}(x) \le C_0$ for all $n \ge 1$, or there exist  $C_1,C_2>1$ such that for all $n \ge 1$ we have 
\[ C_1 p^n \le \Per_{f_n}(x) \le C_2 p^n. \]

\item There exists $C_3 \ge 0$ such that for all $n \ge 1$, we have 
\[ \Tail_{f_n}(x) \le nC_3. \]
\end{enumerate}
In particular, the size of the orbit of $x$ under $f_n$ is at most $O(p^n)$. 

\end{specialtheorem}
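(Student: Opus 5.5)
We need to prove Theorem~\ref{them:finitary}. The plan is to reduce to the behaviour of $f$ near a periodic point of a suitable reduction, and then analyse the linearization there.

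\emph{Step 1 (reduction to a strong fixed point).} Since $(\ZZ/p\ZZ)^d$ is finite, the $f_1$-orbit of $x$ enters a cycle after a tail of length $t_1$ and has period $m_1\le p^d$. Replacing $x$ by $f^{(t_1)}(x)$ and $f$ by $g=f^{(m_1)}$ changes $\Per_{f_n}$ and $\Tail_{f_n}$ by at most bounded multiplicative and additive amounts. Thus we may assume $g(x)\equiv x \pmod p$, so that $g$ maps the ball $B=x+p\ZZ_p^d$ to itself. On $B$ we write $g(x+py)=x+p\,h(y)$, where $h$ is again a polynomial map, since $g$ has $\ZZ_p$-coefficients. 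Iterating this rescaling and passing to further iterates, we aim to reach a point where, after a bounded number of steps, either:
\begin{enumerate}
\item the orbit converges $p$-adically to an actual periodic point of $f$ (a strong fixed point of some iterate), or
\item on a ball $a+p^k\ZZ_p^d$ the iterate is close to a translation-type map $y\mapsto y+c+O(p)$ with $c\not\equiv 0$ modulo $p$ in the rescaled coordinates.
\end{enumerate}

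\emph{Step 2 (linear analysis).} Near a fixed point $a$ of $g$, write $g(a+z)=a+Az+O(z^2)$ with $A=Dg(a)\in M_d(\ZZ_p)$. We then use the matrix decomposition results for $\ZZ_p$ (Fitting-type splitting $\ZZ_p^d=V_{\mathrm{unit}}\oplus V_{\mathrm{nil}}$, on which $A$ acts invertibly, respectively topologically nilpotently) to separate directions. Along $V_{\mathrm{nil}}$ the orbit contracts, and reaching precision $p^{-n}$ takes $O(n)$ steps; this gives the tail bound $\Tail_{f_n}(x)\le C_3 n$. Along $V_{\mathrm{unit}}$, some power $A^{N}$ is $\equiv I \pmod p$, so $A^N=\exp(pL)$ lies in a pro-$p$ group, and the nonlinear map is $1$-Lipschitz and invertible there.

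For an invertible $1$-Lipschitz map $\phi$ with $\phi\equiv \mathrm{id}\pmod{p^k}$, the order of $\phi$ modulo $p^n$ grows by at most a factor $p$ per level. Hence we aim to show: either the relevant point is periodic, and the period is bounded by $C_0$; or the period grows by exactly a factor $p$ per level from some level on. The latter uses that the orbit of such an analytic map lies on a one-parameter flow $t\mapsto \Phi_t(z)$, $t\in\ZZ_p$ (the interpolation of iterates by a $p$-adic analytic flow, obtained after taking a suitable power). The orbit closure is then the image of $\ZZ_p$ under an analytic map. The period modulo $p^n$ is then $p^{\,n-v}$ up to constants, where $v$ is the valuation of the derivative $\partial_t\Phi$, provided that derivative is nonzero. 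If the derivative vanishes identically, the flow is trivial and the orbit is periodic. This yields $C_1p^n\le \Per_{f_n}(x)\le C_2p^n$.

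\emph{Main obstacle.} The main difficulty is the dichotomy in Step 2 for the analytic flow. One must show that the valuation of $\Phi_t(z)-z$ behaves like $v_p(t)+\mathrm{const}$ uniformly, so that the period is comparable to $p^n$ and not to some intermediate growth. My first attempt is to write $\Phi_t(z)-z=t\,\psi(t)$ with $\psi$ analytic, use Weierstrass preparation in $t$, and observe that $\psi(0)\neq 0$ unless $z$ is a fixed point of the flow. If $z$ is a fixed point of the flow, the orbit is eventually periodic, giving the bounded case. The interaction between the contracting part $V_{\mathrm{nil}}$ and the unit part must also be handled. For this I would use a stable-manifold-style conjugacy, or simply note that the orbit converges exponentially fast to the unit-part invariant set, which costs only an $O(n)$ tail.
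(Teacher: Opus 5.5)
\paragraph{A gap in the reduction.} Your proposal has a genuine gap, and it is not where you locate it. Suppose $g(a)\equiv a \pmod{p^k}$. Then the map induced on the fiber over $a \bmod p^k$ is the affine map $t\mapsto \overline{Dg(a)}\,t+b$ on $(\ZZ/p\ZZ)^d$. After replacing $g$ by a power, its linear part becomes an idempotent $S$ modulo $p$ (Lemma~\ref{lem:decom}), not the identity.

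Your Step~1 dichotomy (convergence to a periodic point, or a near-translation $y\mapsto y+c+O(p)$) omits the generic case $S\neq \I$. In that case there need not be any periodic point of any iterate of $f$ to linearize at. Then Step~2, the Fitting splitting near a fixed point $a$, and the stable-manifold conjugacy have nothing to attach to.

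A concrete example is $f(x_1,x_2)=(x_1+1,\,px_2+x_1^2)$.
\begin{itemize}
\item It has no periodic points at all, since its first coordinate is a translation.
\item Its periods mod $p^n$ are at least $p^n$, so they are unbounded.
\item $Df\equiv\left(\begin{smallmatrix}1&0\\ 2x_1&0\end{smallmatrix}\right) \pmod p$. Products of such matrices have the same shape, so $Df^N\not\equiv \I \pmod p$ for every $N$.
\end{itemize}
Hence no iterate of $f$ is congruent to the identity on any full-dimensional polydisc. That congruence is exactly the hypothesis needed to interpolate iterates $p$-adically by an analytic flow.

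To make your route work, you would have to do two things, without a fixed point:
\begin{itemize}
\item construct an invariant \emph{analytic} submanifold along the orbit on which the dynamics is invertible and, after rescaling, congruent to the identity;
\item prove that the orbit comes within $p^{-n}$ of it after $O(n)$ steps.
\end{itemize}
The proposal only gestures at this. Your tail bound rests on the same missing object, since contraction along $V_{\mathrm{nil}}$ only makes sense near a fixed point. Note also that, after passing to the power that makes $\overline{Dg}$ idempotent, Hensel's lemma produces a genuine fixed point from $g(x)\equiv x\pmod p$ only when there are no unit directions at all.

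\paragraph{The step you flag is the easy one, and how the paper proceeds.} The step you call the main obstacle is routine once a flow exists on the orbit closure. Write $\Phi_t(z)-z=t\psi(t)$.
\begin{itemize}
\item If $\psi(0)=0$, then $z$ is a zero of the generating vector field. By uniqueness of analytic solutions it is flow-fixed, so $\psi\equiv 0$.
\item If $\psi(t_0)=0$ for some $t_0\neq 0$, then $\Phi_t(z)=z$ on all of $t_0\ZZ_p$. So $t\psi(t)$ has infinitely many zeros and vanishes identically by Strassmann's theorem.
\end{itemize}
Thus either $\psi\equiv 0$, or $\psi$ has no zero on $\ZZ_p$ and $|\psi|$ is bounded below by compactness.

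The paper avoids this analytic machinery entirely. It finds an orbit point $y$ that is a strong fixed point of an iterate $g=f^m$: that is, $g(y)\equiv y\pmod{p^2}$ and $Dg(y)\equiv S+pM \pmod{p^2}$ with $S^2=S$ and $SM=MS$. It then argues as follows.
\begin{itemize}
\item The fiber map is $t\mapsto St+b_n$. So at each level the period multiplies by exactly $1$ or $p$, according to whether $Sb_n\neq 0$.
\item The condition $Sb\neq 0$ is propagated upward by a second-order computation of $\sum_{i<p}Df(y)^i \bmod p^2$, with a separate argument for $p=2$.
\item The tail bound comes from the mod-$p$ recursion $u_{j+1}=A_ju_j$ for $(x_{j+T}-x_j)/p^n$, together with $\operatorname{Im}B^d=\operatorname{Im}B^{d+1}$. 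This gives $\Tail_{f_{n+1}}(x)\le \Tail_{f_n}(x)+rd$ with no invariant manifold needed.
\end{itemize}
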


Part (1) of this theorem (growth of periods) will be proven in Theorem~\ref{lem:period_fixed_points}. A more precise version of part (2) is proven in 
Theorem~\ref{thm:preperiod-growth}.

Theorem~\ref{thm:Hausdorff} concerns a particular class of self-maps of $\ZZ_p^d$, namely, the $1$-Lipschitz maps. Such maps have been studied extensively in the literature; see, for example, \cite{Ana94, Ana06, Jeo13, Jeo22}. Some restriction on the class of maps under consideration is necessary for any result of this kind that distinguishes between the cases $d=1$ and $d>1$. Indeed, there exists a homeomorphism
$\phi:\ZZ_p\longrightarrow \ZZ_p^d $ such that the push-forward under $\phi$ of the Haar measure on $\ZZ_p$ is the Haar measure on $\ZZ_p^d$. Consequently, every continuous self-map $f$ of $\ZZ_p$ is topologically conjugate to the continuous self-map $\phi\circ f\circ\phi^{-1}$ of $\ZZ_p^d$.

\medskip
The paper is organized as follows. In Section~\ref{sec:prelim}, we introduce the algebraic notions used throughout the paper, including the ring of $p$-adic integers $\ZZ_p$, its geometric structure, and $1$-Lipschitz maps. In Section~\ref{sec:ud_and_discrepancy}, we discuss uniform distribution and discrepancy in $\ZZ_p^d$ and prove Theorem~\ref{thm:ergodic:1d}.
Section~\ref{sec:linalg} establishes several results concerning linear algebra over $\ZZ_p$. Among them is a matrix decomposition result that may be of independent interest. These results are then applied in Section~\ref{sec:dynamics} to analyze the dynamics of multidimensional polynomial maps on $\ZZ_p^d$. One key notion introduced and studied in this section is the notion of strong fixed point.  This section also contains the proofs of Theorems~\ref{thm:Hausdorff} and~\ref{them:finitary}.
Finally, in Section~\ref{sec:examples}, we construct ergodic maps on $\ZZ_p^d$ for every $d\geq 1$, thereby obtaining explicit examples of multidimensional $p$-adic low-discrepancy sequences.

\medskip
{\it Acknowledgments}. The authors would like to thank Anke Pohl for drawing our attention to our shared research interests, which initiated this collaboration.

\section{Preliminaries from Algebra and Dynamical Systems}\label{sec:prelim}
In this section, we introduce some notation and define several basic notions on the $p$-adic numbers and their geometry as well as on dynamical systems that will be used throughout the sequel. Readers interested in further details may consult \cite{Neu99}.

\subsection{The ring of $p$-adic integers and its metric} We will start with a quick review of the construction and some basic properties of the ring of $p$-adic integers. Let $p$ be a prime number.  The ring $\ZZ_p$ of $p$-adic integers may be viewed both algebraically and
metrically. Algebraically, it is constructed as the inverse limit
\[ \ZZ_p=\varprojlim_{n} \ZZ/p^n\ZZ \]
with respect to the natural reduction maps $\theta_{m,n}: \ZZ/p^{m}\ZZ \to \ZZ/p^n\ZZ$ for $ m >n$. Thus, an element of $\ZZ_p$ can be regarded as  a compatible
system of residue classes
\[ (x_n)_{n \in \mathbb{N}}, \qquad x_n \in \ZZ/p^n\ZZ, \qquad x_{m} \equiv x_n \pmod{p^{\min(m,n)} }. \]
Equivalently, every $p$-adic integer has a unique $p$-adic representation as 
\begin{equation}\label{$p$-adic}
 x=a_0+a_1p+a_2p^2+\cdots, \qquad a_i \in \{0,1,\ldots,p-1\}. 
\end{equation}
In this case, we will define the \textit{reduction mod}  $p^n$ map $\pi_n: \ZZ_p \to \ZZ/p^n \ZZ$ by 
\[
\pi_n(x)=x_n:=a_0+a_1p+ \cdots + a_{n-1}p^{n-1}.
\]
When $ \theta_{m,n}(y)=x$, we call $y$ a lift of $x$. If $ y$ is represented by $(a_0+a_1p+ \cdots + a_{n-1}p^{m-1})$, then $x$ is represented by $a_0+a_1p+ \cdots + a_{n-1}p^{n-1}$ and we can write
\[ y = x+ p^n z, \qquad z= a_{n}+ \cdots + a_{m}p^{m-n-1}. \]
Often we abuse notation and use the shorthand $ y= x+ p^n z$ with $z \in \ZZ/p^{m-n} \ZZ$. 

\medskip 

\noindent
For a non-zero $x \in \ZZ_p$,  we write $v_p(x)$ for the $p$-adic valuation of $x$ which is defined as the least $n$ such that $a_n \neq 0$ in \eqref{$p$-adic}. The metric on $\ZZ_p$ is induced by the $p$-adic absolute value. Namely, for
$x,y \in \ZZ_p$ one sets
\[
d_p(x,y)=|x-y|_p=p^{-v_p(x-y)}.
\]
Similarly, $\mathbb Z_p^d$ for $d \geq 1$ is equipped with the natural $p$-adic metric
\[
d_p(x,y)=\max_{1\leq i\leq d}|x_i-y_i|_p.
\]
By a slight abuse of notation, we will write $d_p$ without specifying the dimension. In each application, the relevant dimension $d$ will either be clear from the context or the statement under consideration will hold for all $d$.

The following properties of $d_p$ will be used in the sequel. First, $d_p$ is an ultrametric, i.e., it satisfies $d_p(x,z) \le \max ( d_p(x, y), d_p(y, z))$ for all $x, y, z \in \ZZ_p$. Second, for each $k \geq 1$, the ball of radius $p^{-k}$ centered at $z \in \ZZ_p$ is defined by
\[
\textrm{Disc}_p(z,p^{-k}) := \left\{ x \in \mathbb{Z}_p \, : \, \left| x - z \right|_p \leq p^{-k} \right\} =
z+p^n \ZZ_p. 
\]

Note that $\textrm{Disc}_p(z,p^{-k})$ is a coset of the subgroup $p^k \ZZ_p$ in $\ZZ_p$.  More generally,  for $z=(z_1,\ldots,z_d)\in \mathbb{Z}_p^d$ and $k=(k_1,\ldots,k_d)\in \mathbb{Z}_{\ge 0}^d$, we define the $p$-adic polydisc centered at $z$ with resolution vector $k$ by
 
\[
\textrm{Disc}_p(z,p^{-k}) := \left\{ x \in \mathbb{Z}_p^d \, : \, \left| x_i - z_i \right|_p \leq p^{-k_i} \; \mathrm{for \ all} \; i=1,\ldots,d \right\} 
\]

It is easy to see that, for every $d \geq 1$, the space $\mathbb Z_p^d$ is compact, totally disconnected, and has no isolated points. Hence $\mathbb Z_p^d$ is homeomorphic to the Cantor set, which may be identified with $\{0,1\}^{\mathbb N}$.

\medskip

Suppose $x \in \ZZ_p^d$ and let $n \ge 1$.  One can see that  $\textrm{Disc}_p(z,p^{-n})$ decomposes into a union of $p^{d}$ balls of radius 
$p^{-(n+1)}$. These balls are in bijection with the cosets of $p^{n+1}\ZZ_p^d$ in $p^n \ZZ_p^d$.  Let $X$ denote the set of these balls. Note that every element of $X$ is of the form $B=\textrm{Disc}_p(y,p^{-(n+1)})$ for some $y \equiv x \pmod{ p^n }$. Writing 
$y=x+p^nt$, one observes that $B$ depends only on the congruence class of $t \in \ZZ_p^d$ modulo $p$.  Hence, we can define a bijection 
\begin{equation}\label{tau}
 \tau: X \to (\ZZ/p\ZZ)^d, \quad \tau (y)= [t] \in p^n\ZZ_p^d/p^{n+1}\ZZ_p^d \simeq
(\ZZ/p\ZZ)^d. 
\end{equation}  
Throughout the remainder of this article, this bijection will be used to shift between these two viewpoints. 

\subsection{Lipschitz maps on $\ZZ_p^d$}
It follows from the definition of the $p$-adic metric that a function $f: \mathbb{Z}_p^d \to \mathbb{Z}_p^d$ is continuous at $x_0 \in \mathbb{Z}_p$ if and only if for every $m \in \mathbb{N}$, there exists an integer $R  \in \mathbb{N}$ such that for every $x \in \ZZ_p^d$ satisfying $x \equiv x_0 \mod p^R$ it holds that $f(x) \equiv f(x_0) \mod p^m$. If one can choose $R(m) = m$ for all $m \in \mathbb{N}$ and all values of $x_0$, then $f$ is called a $1$-Lipschitz function. Equivalently, $f$ is $1$-Lipschitz if for all $x, y \in \ZZ_p^d$ we have 
\[
\left| f(x)-f(y) \right|_p \leq \left| x-y\right|_p.
\]
The class of $1$-Lipschitz functions has been extensively studied in the literature, see e.g. \cite{Ana94,Jeo13,Jeo22}. Important examples of $1$-Lipschitz functions are polynomials $f: \mathbb{Z}_p \to \mathbb{Z}_p$. Note that when $f$ is $1$-Lipschitz, then the image 
of $\textrm{Disc}_p(z,p^{-k})$ under $f$ is included in a unique ball of radius $p^{-k}$. 

\medskip

\subsection{Measures and the box dimension}\label{boxdim} One can also equip $\ZZ_p^d$ with its normalized Haar probability measure. This measure is uniquely characterized by the property that each residue class $a+p^k \ZZ_p^d$ has measure $p^{-dk}$ for $k \ge 0$ and $a \in \ZZ_p^d$. We denote this measure by $\mu_d$. One can show that the measure of a polydisc is given by 
\[
\mu_d(\textrm{Disc}_p(z,p^{-k})) = \prod_{i=1}^d p^{-k_i}.
\]

Let $E \subseteq \ZZ_p^d$. 
For each $n \geq 0$, let $N_n(E)$ denote the number of residue classes modulo $p^n$ which intersect $E$. In other words, set 
$N_n(E)=\lvert\pi_n(E)\rvert.$ 
The {\it box, or Minkowski, dimension} of $E$ is, whenever the limit defined by
\[
\dim_B(E)=\lim_{n\to\infty}\frac{\log N_n(E)}{n \log p}= \lim_{n\to\infty}\frac{\log |
\pi_n(E)|}{n \log p}
\]
exists. More generally, replacing the limit by $\limsup$ and $\liminf$ gives the upper and lower box dimensions of $E$, respectively. It is immediate from the definition that for $ E \subseteq \ZZ_p^d$, we have $ 0 \le \dim_B(E) \le d$. 

\medskip

\subsection{Orbits for mappings of finite sets} Let $S$ be a set, and  $f: S \to S$ be an arbitrary function. The orbit of $s$ under $f$ defined by
\[
\mathcal{O}(s):= \{ s, f(s), f^2(s), \dots\ \}. 
\]
Here and in the sequel, $f^k$ stands for the $k$-th iterate of $f$. An element $s \in S$ is called periodic if there exists $T \ge 1$ such that $f^T(s)=s$.  The least value of $T$ is called the period of $s$. 
When $T=1$, we say that $s$ is a fixed point for $f$. Note that this point is the unique $f$-fixed point in $\mathcal{O}(s)$.

When $S$ is finite, not all elements $f^j(s), j \ge 0$ can be distinct. Let $D$ be the least non-negative integer with the property that $f^D(s)$ is periodic, and let $T$ denote its period. More precisely, set $T$ to be the least positive integer such that $f^D(s)= f^{D+T}(s)$. Hence, the orbit of $s$ can be written as a disjoint union 
\[ 
\mathcal{O}(s)= \{ f^j(s): \ 0 \le j \le D-1 \} \cup \{ f^j(s): \ D \le j \le D+T-1 \}. 
\]
The first $D$ elements are referred to as the {\it pre-periodic part of the orbit} and the remaining $T$ elements are the {\it periodic} part of the orbit. We denote the value of $D$ by $\Tail_f(s)$ and the value of $T$ by $\Per_f(s)$. 
It is clear from the description above that the period $T$ is an invariant of the orbit, and that every element $f^j(s)$ with $j \ge D$ is periodic and has the same period $T$. More generally, one can see that if $\mathcal{O}(s_1) \cap \mathcal{O}(s_2) \neq \emptyset$, then $\Per_{f}(s_1)= \Per_f(s_2)$.

\medskip

For the sake of completion, we provide the proof of the following elementary lemma.

\begin{lemma}\label{sizeofperiod}
Let $S$ be a finite set and let $f:S \to S$ be a map and $s \in S$ have period $T \ge 1$. For every point $t$ in the $f$-orbit of $s$, and every $m \ge 1$ we have
\[
\frac{T}{\min(m,T)} \leq \frac{T}{\gcd(m,T)} = \Per_{f^m}(t) \leq T.
\]
\end{lemma}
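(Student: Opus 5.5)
The plan is to reduce everything to the periodic cycle and then do elementary arithmetic in $\ZZ/T\ZZ$.

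\emph{Reduction to the cycle.} By hypothesis $s$ is periodic with period $T$, so $f^T(s)=s$ and the orbit $\mathcal{O}(s)=\{s,f(s),\dots,f^{T-1}(s)\}$ consists of exactly $T$ distinct points, cyclically permuted by $f$. Any $t\in\mathcal{O}(s)$ has the form $t=f^j(s)$, and $f^i(t)=t$ holds if and only if $f^{i}(s)=s$ (apply $f^{T-j}$ in one direction and $f^{j}$ in the other). Thus every $t$ in the orbit is periodic of period exactly $T$, and
\[
f^i(t)=t \iff T\mid i .
\]
In particular the $f^m$-orbit of $t$ is purely periodic, so $\Per_{f^m}(t)$ is the least $k\ge 1$ with $(f^m)^k(t)=t$.

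\emph{Computing the period of $f^m$.} By the displayed equivalence, $\Per_{f^m}(t)$ is the least $k\ge1$ with $T\mid mk$. Writing $g=\gcd(m,T)$, the condition $T\mid mk$ is equivalent to $(T/g)\mid (m/g)k$. Since $\gcd(T/g,m/g)=1$, this is in turn equivalent to $(T/g)\mid k$. Hence
\[
\Per_{f^m}(t)=\frac{T}{\gcd(m,T)}.
\]

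\emph{The inequalities.} These are immediate:
\begin{itemize}
\item $\gcd(m,T)\ge 1$ gives $T/\gcd(m,T)\le T$;
\item $\gcd(m,T)$ divides both $m$ and $T$, so $\gcd(m,T)\le\min(m,T)$, which gives $T/\min(m,T)\le T/\gcd(m,T)$.
\end{itemize}

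The only step needing any care is the reduction to the cycle, namely the observation that every point of $\mathcal{O}(s)$ has exact period $T$ and returns precisely at multiples of $T$. This is exactly what allows the statement to hold uniformly for all $t$ in the orbit. Everything after that is divisibility arithmetic.
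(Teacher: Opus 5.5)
Your proof is correct and follows the same route as the paper. Both reduce to the cycle, note that a point on an $f$-cycle of length $T$ returns under $f^m$ after $L$ steps exactly when $T\mid mL$, and conclude $\Per_{f^m}(t)=T/\gcd(m,T)$. You actually justify the final divisibility step via $\gcd(T/g,m/g)=1$, which the paper only asserts.

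The one difference is scope. You read ``period $T$'' as $f^T(s)=s$. The paper's proof instead takes $T=\Per_f(s)$ to be the eventual period, so $s$ and $t$ may lie in the preperiodic part. It first notes that the $f^m$-orbit of $t$ eventually enters the $f$-cycle $C$, and only then computes periods on $C$.

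That more general version is the one used later. In the proof of Theorem~\ref{thm:period}, $y=f^{n_1}(x)$ is only known to be periodic modulo $p^2$, not modulo $p^n$. Your argument covers this case with one extra line. Eventual periods are constant along an orbit, so $\Per_{f^m}(t)$ equals the $f^m$-period of any point of $C$ on the $f^m$-orbit of $t$. Every point of $C$ has exact $f$-period $T$, so your computation then applies verbatim.
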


\begin{proof}
Let $T=\Per_f(s)$. By definition, the forward $f$-orbit of $s$ eventually enters a cycle of length $T$. Since $t$ lies on the same forward orbit, the forward $f$-orbit of $t$ also eventually enters this same cycle.

Let $C$ be this $f$-cycle. Thus, for every $x \in C$, we have $f^T(x)=x$ and $T$ is the smallest positive integer with this property. Now consider the map $f^m$. Since the $f$-orbit of $t$ eventually enters $C$, the $f^m$-orbit of $t$ also eventually enters $C$. It remains to compute the period of a point of $C$ under $f^m$. Fix $x \in C$. Its period under $f^m$ is the smallest positive integer $L$ such that $(f^m)^L(x)=x$, which is equivalent to $f^{mL}(x)=x$. Since $x$ has period $T$ under $f$, this happens if and only if  $T \mid mL.$ Therefore, $\Per_{f^m}(t)=\frac{T}{\gcd(T,m)}.$ The claim follows from $ 1 \leq \gcd(m, T) \le \min(m, T)$. 
\end{proof}

\section{Uniform Distribution, Discrepancy and Proof of Theorem~\ref{thm:ergodic:1d}} \label{sec:ud_and_discrepancy}
The goal of this section is to introduce the discrepancy of $p$-adic sequences, with a particular emphasis on dynamically generated sequences. Readers interested in more details are referred to \cite{KN74, Mei68, Som22, Wei25}. Moreover, we prove Theorem~\ref{thm:ergodic:1d} at the end of the section.

\subsection{The $p$-adic discrepancy}
The $p$-adic discrepancy of a sequence $(x_n)_{n \in \mathbb{N}}$ in the $p$-adic integers $\mathbb{Z}_p$ is the sequence $(D_N(x_n))_{N \in \NN}$ defined by
\[
D_N(x_n) := \sup_{z \in \mathbb{Z}_p, k \in \mathbb{N}_0} \left| \frac{\#\left( \textrm{Disc}_p(z,p^{-k}) \cap \{ x_1,\ldots,x_N \}\right)}{N} -  p^{-k} \right|, \qquad N \ge 1.
\]
This definition generalizes to the $p$-adic setting the more widely studied notion of discrepancy for sequences in the real unit interval, for the definition and basic properties see \cite{Tichy}. Recall that a sequence $(x_n)_{n \in \mathbb{N}}$ in $\mathbb{Z}_p$ is {\it uniformly distributed} if 
\[
\lim_{N \to \infty} D_N(x_n) = 0.
\]
As in the classical setting of sequences in the unit interval, discrepancy theory may be viewed as a quantitative refinement of the qualitative notion of uniform distribution.

A fundamental problem in real discrepancy theory is to establish optimal lower bounds for the discrepancy of sequences and to construct sequences for which these bounds are attained. For sequences in $[0,1]$, it is known that the optimal order of magnitude for the discrepancy $D_N(x_n)$ is $\frac{\log N}{N}$, see \cite{Sch72}. This is a highly non-trivial result, improving on the elementary lower bound $D_N(x_n)\gg 1/N$. In contrast with the real setting, it is well-known that there exist sequences in $\ZZ_p$ for which the trivial lower bound is sharp $D_N(x_n) \leq \frac{C}{N}$.

\begin{proposition}
Let $(x_n)_{n \in \NN}$ be a sequence in $\ZZ_p$. Then $D_N(x_n) \ge 1/N$ for all $N \in \mathbb{N}$. Moreover, there are sequences with 
\begin{equation}\label{low}
D_N(x_n) \le \frac{C}{N}
\end{equation}
for all $N \ge 1$ and some $C \geq 1$. 
\end{proposition}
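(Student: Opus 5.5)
For the lower bound, I will take a single, very small disc around one point of the sequence. Fix $N$, set $z=x_1$, and let $k$ be large enough that $p^{-k}\le \frac{1}{2N}$. The point $x_1$ lies in $\textrm{Disc}_p(x_1,p^{-k})$, so the counting term is at least $1/N$. This gives
\[
D_N(x_n)\ \ge\ \frac{1}{N}-p^{-k},
\]
which tends to $1/N$ as $k\to\infty$. Since $D_N$ is a supremum over all $k$, we get $D_N(x_n)\ge 1/N$. This step needs no case distinction.

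For the upper bound, I will use the van der Corput (Monna) sequence $x_n=n-1\in\ZZ\subset\ZZ_p$, i.e. $x_n=n-1$ for $n\ge1$. Equivalently, this is the orbit of $0$ under the ergodic translation $x\mapsto x+1$, which fits the dynamical theme.

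Fix $k$ and a disc $\textrm{Disc}_p(z,p^{-k})=z+p^k\ZZ_p$. The terms $x_1,\dots,x_N$ are the integers $0,\dots,N-1$. Those lying in this disc are exactly the integers in $[0,N-1]$ that are congruent to a fixed residue modulo $p^k$. Write $N=qp^k+r$ with $0\le r<p^k$. Then the count is either $q$ or $q+1$, and hence
\[
\left|\#\bigl(\textrm{Disc}_p(z,p^{-k})\cap\{x_1,\dots,x_N\}\bigr)-Np^{-k}\right|\ \le\ |r p^{-k}-\varepsilon|\ \le\ 1,
\]
where $\varepsilon\in\{0,1\}$ records whether the residue is among the first $r$ classes. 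Dividing by $N$ and taking the supremum over $z$ and $k$ gives $D_N(x_n)\le 1/N$, so \eqref{low} holds with $C=1$.

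The only point requiring care is to check that this bound is uniform in $k$, including when $p^k>N$. In that case $q=0$ and the count is $0$ or $1$, and the deviation is still at most $\max(Np^{-k},\,1-Np^{-k})\le 1$. The argument is therefore elementary throughout, and I expect no real obstacle.
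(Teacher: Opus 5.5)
Your proof is correct, and it follows the paper's reasoning. The paper gives no separate proof of this proposition. Your upper bound is exactly the block-counting argument the paper uses for (1)$\Rightarrow$(4) in Theorem~\ref{thm:ergodic:1d}, specialized to the orbit of $0$ under the ergodic translation $x\mapsto x+1$: the count is $\lfloor N/p^k\rfloor$ or one more, so the deviation is at most $1/N$. Your lower bound, using a shrinking disc around $x_1$, is the elementary argument the paper calls trivial.
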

Any sequence $(x_n)_{n \in \NN}$ in $\ZZ_p$ with $ D_N(x_n) \le C/N$ for some $C>0$ will be referred to as a  $p$-adic \textit{low-discrepancy} sequence. Note that the term $p^{-k}$ in the definition of the discrepancy equals the Haar measure $\mu(\textrm{Disc}_p(z,p^{-k}))$. For the simplest case of affine linear sequences $x_n=a+bn$ with $a,b \in \mathbb{Z}_p$ it is easy to show that it is uniformly distributed if and only if $b$ is a unit, see e.g. \cite{Ana94,Cug62}. This criterion has been generalized to arbitrary $1$-Lipschitz functions in \cite[Proposition~6.26]{Jeo22} which constitutes a good reason to study this class of maps. It is also natural to analyze which sequences defined via polynomial functions $f: \mathbb{Z}_p \to \mathbb{Z}_p$, which are a subclass of $1$-Lipschitz functions, satisfy \eqref{low} for $x_n = f^{(n)}(x_0)$. This question has been studied in \cite{Wei25}. 
 
\begin{theorem} \label{thm:permutation} Let $f(x) \in \ZZ_p[x]$ be a polynomial and $x_0 \in \ZZ_p$. Then $(x_n)_{n \in \NN} = (f^{(n)}(x_0))_{n \in \mathbb{N}}$ satisfies \eqref{low} if and only if $f$ is a permutation polynomial $\bmod\ p^2$, i.e. it induces a bijection on $\mathbb{Z}/p^2\mathbb{Z}$.
\end{theorem}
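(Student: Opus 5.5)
The plan is to pass through the single-cycle structure of the reductions $f_k$ on $\ZZ/p^k\ZZ$. The statement is cited from \cite{Wei25}, and I read its phrase ``permutation polynomial $\bmod\ p^2$'' in the sense needed there: the induced bijection of $\ZZ/p^2\ZZ$ is a single $p^2$-cycle. The literal bijection condition cannot suffice, since $f(x)=x$ induces a bijection of $\ZZ/p^2\ZZ$ but has constant orbits. The argument has two halves.

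\emph{First half: discrepancy versus single cycles.} I will show that for a $1$-Lipschitz $f$, the orbit $(x_n)$ satisfies \eqref{low} if and only if $f_k$ is a single $p^k$-cycle for every $k$.

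For sufficiency, suppose every $f_k$ is a $p^k$-cycle. Fix a ball $\textrm{Disc}_p(z,p^{-k})$. Then $x_n$ lies in this ball exactly when $n$ runs through one residue class modulo $p^k$. Hence among $x_1,\dots,x_N$ the number of hits is $N p^{-k}+\theta$ with $|\theta|\le 1$. This gives $D_N\le 1/N$ uniformly in $z$ and $k$.

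For necessity, suppose the residues $\pi_k(x_n)$ do not form a single $p^k$-cycle for some $k$. The first sub-case is that the orbit misses a residue class mod $p^k$; then $D_N\ge p^{-k}$ for all $N$, so \eqref{low} fails. The second sub-case is that the orbit visits every class but with a nontrivial tail or cycle structure. Since a polynomial is $1$-Lipschitz, the residues are eventually periodic with some period $T$. If $T\neq p^k$, the visit frequencies over one period cannot all equal $p^{-k}$, so again $D_N\not\to 0$.

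\emph{Second half: lifting from level $2$ to all levels.} This is the key step and I expect it to be the main obstacle. I will show that a polynomial $f\in\ZZ_p[x]$ which is a $p^2$-cycle mod $p^2$ is a $p^k$-cycle mod $p^k$ for all $k$. The plan is induction on $k\ge 2$.

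Assume $f_k$ is a $p^k$-cycle and put $g=f^{p^k}$. Then $g(x)\equiv x \pmod{p^k}$, so on each fibre $x+p^k\ZZ_p$ we can write
\[
g(x+p^k t)\equiv x + p^k\bigl(a(x)+b(x)t\bigr)\pmod{p^{k+1}}.
\]
Here $b(x)=\prod_{j<p^k}f'(f^j(x))$ is computed by the chain rule. Moreover $f_{k+1}$ is a $p^{k+1}$-cycle exactly when $b\equiv 1$ and $a\not\equiv 0 \pmod p$. Both $b \bmod p$ and $a \bmod p$ will be related to the level-$2$ data by Taylor expansion of the polynomial $f$: the conditions at level $k=1$, $2$ propagate because $p^k$ iterates group into $p$ blocks of $p^{k-1}$ iterates. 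For $p\ge 5$ this yields $a_{k+1}\equiv a_k\pmod p$.

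For $p=2,3$ the Taylor remainder terms $p^{2k}f''/2$ do not vanish at the first steps, and the level-$2$ hypothesis may need a direct check at level $3$. I will first try the explicit second-order computation of $a(x)$ for these primes. If it fails, I will verify level $3$ by hand.

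\emph{Conclusion.} Once both halves are in place, the equivalence is obtained by combining them. Necessity is immediate, since the single-cycle property at every level contains in particular level $2$. Sufficiency follows from the lifting step, which upgrades the level-$2$ cycle to every level, together with the counting bound from the first half.
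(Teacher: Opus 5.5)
Your first half is fine: for the orbit, low discrepancy is equivalent to $f_k$ being a single $p^k$-cycle for every $k$. You are also right that the statement cannot be meant literally, since $f(x)=x$ permutes $\ZZ/p^2\ZZ$.

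The genuine gap is the second half. The lifting claim ``a single $p^2$-cycle modulo $p^2$ is a single $p^k$-cycle for all $k$'' holds for $p\ge 5$ but is false for $p=2$ and $p=3$. So your fallback of checking level $3$ by hand cannot succeed, and under your reading the theorem itself is false for these primes.
\begin{itemize}
\item For $p=2$, take $f(x)=2x^3-x+1$. Modulo $4$ it is the cycle $0\to1\to2\to3\to0$. Modulo $8$ it splits into $0\to1\to2\to7\to0$ and $3\to4\to5\to6\to3$. Hence the orbit of $x_0=0$ never meets $3+8\ZZ_2$, and $D_N\ge 1/8$ for all $N$.
\item For $p=3$, take $f(x)=x-1+x^2(x^3-x)^2$. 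Since $3\mid x^3-x$ on $\ZZ_3$, we have $f\equiv x-1 \pmod 9$, a single $9$-cycle. Yet modulo $27$ the orbit of $0$ is $0,26,25,6,5,13,21,20,1,0$, of length $9$.
\end{itemize}

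The mechanism is already visible at the first lift. Let $x_0$ lie on the level-$1$ cycle and set $g=f^p$, with $g(x_0)=x_0+pb_1$, $g'(x_0)=1+p\alpha$ and $c=g''(x_0)/2\in\ZZ_p$. Then $g(x_0+pt)\equiv x_0+p\bigl(b_1+(1+p\alpha)t\bigr)+p^2ct^2 \pmod{p^3}$. Iterating $p$ times gives the level-$2$ translation part
\[
b_2\equiv b_1+\alpha b_1\sum_{i=0}^{p-1}i+c\,b_1^2\sum_{i=0}^{p-1}i^2 \pmod p .
\]
Transitivity modulo $p^2$ only gives $g'(x_0)\equiv 1$ and $b_1\not\equiv 0 \pmod p$. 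Both power sums vanish modulo $p$ precisely when $p\ge 5$. For $p=3$ the quadratic term, and for $p=2$ the quadratic and $\alpha$ terms, can cancel $b_1$; this is exactly what happens in the two examples.

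The paper itself gives no proof of this statement; it only quotes \cite{Wei25}. Its surrounding remarks point to the sequence $x_n=f(n)$ rather than the orbit. These are the remark that the permutation property modulo $p^2$ already implies it modulo every $p^n$ (citing \cite{Noe65,Wei25}), and the sentence that $x_n=f(n)$ is then low-discrepancy. For that sequence the literal hypothesis, a bijection of $\ZZ/p^2\ZZ$, is exactly right for every $p$.
\begin{itemize}
\item Your first half carries over with $n \bmod p^k$ in place of the position on the cycle: $(f(n))$ is low-discrepancy iff $f$ permutes every $\ZZ/p^k\ZZ$.
\item The second half becomes a two-line Hensel argument with no exceptional primes. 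For $k\ge 1$ we have $f(x+p^kt)\equiv f(x)+p^kf'(x)t \pmod{p^{k+1}}$. So $f$ permutes $\ZZ/p^2\ZZ$ iff it permutes $\ZZ/p\ZZ$ and $f'\not\equiv 0$ on $\mathbb{F}_p$, and these two conditions give bijectivity at every level.
\end{itemize}
If you instead insist on orbits, the hypothesis must be transitivity modulo $p^2$ for $p\ge 5$ and modulo $p^3$ for $p\in\{2,3\}$. For those two primes your induction then has to start at level $3$.
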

To the best of the authors' knowledge, Theorem~\ref{thm:permutation} covers all explicit examples of $p$-adic low-discrepancy sequences besides the rather abstract result in \cite{Bee69} which have been previously found. 

More generally, the concepts of uniform distribution and low-discrepancy also transfer to the multi-dimensional case $\mathbb{Z}_p^d$.  Given a sequence $(x_n)_{n \in \NN}$ in $\ZZ_p^d$, its $d$-dimensional $p$-adic discrepancy is defined by
\[
D_N(x_n) := \sup_{z \in \mathbb{Z}_p^d, k \in \mathbb{N}_0^d} \left| \frac{\#\left( \textrm{Disc}_p(z,p^{-k}) \cap \{ x_1,\ldots,x_N \}\right)}{N} - p^{-dk} \right|.
\]
At first glance, it may come as a surprise that also in the multi-dimensional setting there are sequences which satisfy $D_N(x_n) \leq \frac{C}{N}$ and that the notion of low-discrepancy thus remains valid also for $\ZZ_p^d$ without any dependence on the dimension.

\subsection{Dynamical systems of $p$-adic origin} 
We will now set up the basic notation and definitions for $p$-adic dynamical systems.  A \textit{$p$-adic dynamical system} is a triple $(\mathbb{Z}_p^d,\mu_p^d,f)$, where $f: \mathbb{Z}_p^d \to \mathbb{Z}_p^d$ is a measurable function with respect to the Borel $\sigma$-algebra and the Haar measure $\mu_p^d$. We say that $f$ is \textit{measure-preserving} if $\mu(f^{-1}(S)) = \mu(S)$ holds for each measurable subset $S \subset \mathbb{Z}_p^d$. A measure-preserving function is said to be \textit{ergodic} if it has no proper invariant subsets, i.e., if either $\mu(S)=1$ or $\mu(S)=0$ holds for any measurable subset $S \subset \mathbb{Z}_p$ such that $f^{-1}(S)=S$.
We say that $f$ is transitive $\bmod\ p^n$ if the above sequence forms a single cycle in $(\mathbb{Z}_p/p^n\mathbb{Z}_p)^d$. Much of what follows will be based on \cite[Proposition~4.1]{Ana06}.
\begin{theorem}[Anashin] \label{thm:Anashin}
A compatible function $f: \mathbb{Z}_p^d \to \mathbb{Z}_p^d$ is ergodic if and only if it is transitive $\bmod\ p^n$ for all $n \in \mathbb{N}$.    
\end{theorem}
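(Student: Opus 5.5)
The plan is to prove both directions by reducing everything to the finite maps $f_n:(\ZZ/p^n\ZZ)^d\to(\ZZ/p^n\ZZ)^d$ induced by $f$. Here "compatible" means $1$-Lipschitz, so $f$ maps every ball $a+p^n\ZZ_p^d$ into a single ball of the same radius and $f_n$ is well defined. The basic tool is the following observation. For a level-$n$ ball $B$, the preimage $f^{-1}(B)$ is exactly the union of the level-$n$ balls $B'$ with $f_n(\pi_n(B'))=\pi_n(B)$. Consequently, for any set $C\subseteq(\ZZ/p^n\ZZ)^d$ with $f_n^{-1}(C)=C$, the clopen set $U_C=\pi_n^{-1}(C)$ satisfies $f^{-1}(U_C)=U_C$.

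\emph{Ergodic $\Rightarrow$ transitive.} Assume $f$ is measure-preserving and ergodic.
\begin{enumerate}
\item First I show that $f_n$ is a bijection. For a level-$n$ ball $B$, the set $f^{-1}(B)$ is a union of level-$n$ balls and has measure $\mu(B)=p^{-dn}$. Hence it consists of exactly one ball, so every residue class has exactly one $f_n$-preimage.
\item A permutation of a finite set decomposes into cycles. Let $C$ be one cycle of $f_n$. Then $f_n^{-1}(C)=C$, so $U_C$ is $f$-invariant by the observation above.
\item The measure of $U_C$ is $|C|p^{-dn}$. Ergodicity forces this to be $0$ or $1$, so $|C|=p^{dn}$. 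Thus $f_n$ is a single cycle, i.e.\ $f$ is transitive mod $p^n$.
\end{enumerate}

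\emph{Transitive $\Rightarrow$ ergodic.} Assume $f_n$ is a single cycle for every $n$.
\begin{enumerate}
\item In particular each $f_n$ is a bijection, so $f^{-1}(B)$ is a single level-$n$ ball for every level-$n$ ball $B$. Hence $\mu(f^{-1}B)=\mu(B)$ for all balls. Since the balls form a $\pi$-system generating the Borel $\sigma$-algebra, $f$ is measure-preserving.
\item Let $S$ be measurable with $f^{-1}(S)=S$. For every level-$n$ ball $B$,
\[ \mu(S\cap f^{-1}B)=\mu\bigl(f^{-1}(S\cap B)\bigr)=\mu(S\cap B). \]
Here $f^{-1}B$ is the level-$n$ ball preceding $B$ in the cycle of $f_n$.
\item Since $f_n$ is one cycle through all $p^{dn}$ classes, iterating step 2 shows that $\mu(S\cap B)$ is the same for every level-$n$ ball. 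Summing over the balls gives
\[ \mu(S\cap B)=\mu(S)\,\mu(B) \]
for every ball $B$ of every level.
\item By additivity this identity extends to finite disjoint unions of balls, i.e.\ to all clopen sets. By outer regularity of Haar measure, $S$ can be approximated in measure by clopen sets $U_k$ with $\mu(S\triangle U_k)\to0$.
\item Then $\mu(S)=\lim_k\mu(S\cap U_k)=\lim_k\mu(S)\mu(U_k)=\mu(S)^2$, so $\mu(S)\in\{0,1\}$ and $f$ is ergodic.
\end{enumerate}

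I expect the main obstacle to be step 4 of the second direction: passing from the identity on balls to arbitrary invariant measurable sets. I would handle it with the standard fact that clopen sets are dense in the measure algebra of $\ZZ_p^d$. Alternatively, I would apply a monotone class argument to the family of sets $A$ satisfying $\mu(S\cap A)=\mu(S)\mu(A)$, and then take $A=S$. The rest of the argument is combinatorics of the finite maps $f_n$.
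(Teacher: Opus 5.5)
Your proof is correct. The paper does not prove this statement: it imports it from Anashin \cite[Proposition~4.1]{Ana06} and uses it as a black box. Its main use is in the proof of Theorem~\ref{thm:ergodic:1d}, to know that every residue class modulo $p^k$ occurs exactly once in each block of $p^{dk}$ consecutive orbit points. So your argument is not an alternative to a proof in the paper but a self-contained replacement for the citation.

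Your structure is the natural one.
\begin{itemize}
\item In one direction, measure preservation, together with the fact that $f^{-1}$ of a level-$n$ ball is a union of level-$n$ balls, forces each $f_n$ to be bijective. Any cycle $C$ of $f_n$ then yields the clopen invariant set $\pi_n^{-1}(C)$ of measure $|C|p^{-dn}$.
\item In the other direction, transitivity makes $\mu(S\cap B)$ constant over level-$n$ balls. Density of clopen sets in the measure algebra then upgrades $\mu(S\cap B)=\mu(S)\mu(B)$ to $\mu(S)=\mu(S)^2$.
\end{itemize}
Two routine details should be stated. For the density step, outer regularity gives an open $O\supseteq S$ with $\mu(O\setminus S)$ small, and you then truncate the countable disjoint ball decomposition of $O$ to finitely many balls. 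Also, your $\pi$-system should include $\emptyset$.

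Besides making the paper self-contained, your argument yields two by-products.
\begin{itemize}
\item Your two ``step 1''s together show that a compatible map is measure-preserving if and only if every $f_n$ is bijective.
\item Run your step 3 with an arbitrary $f$-invariant Borel probability measure $\nu$ in place of $\mu(S\cap\cdot)$. Transitivity then forces $\nu(B)=p^{-dn}$ for every level-$n$ ball $B$, hence $\nu=\mu$.
\end{itemize}
The second point gives unique ergodicity directly from transitivity. That would shortcut the implication (1)$\Rightarrow$(2) of Theorem~\ref{thm:ergodic:1d}, avoiding Birkhoff averages and the ergodic decomposition.
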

 Here, compatible means that $f$ descends to a function on the quotient, $f_n: (\mathbb{Z}_p / p^n \mathbb{Z}_p)^d \to (\mathbb{Z}_p / p^n \mathbb{Z}_p)^d$, which is natural multi-dimensional analogue of being $1$-Lipschitz. 

\subsection{Proof of Theorem~\ref{thm:ergodic:1d}}
Having established the necessary preliminaries, we now proceed to the proof of Theorem~\ref{thm:ergodic:1d}.
\begin{proof}[Proof of Theorem~\ref{thm:ergodic:1d}]
It is clear that (4) implies (3) and (2) implies (1). That (3) implies (2) is a general fact which can be proven in the following way: In view of the ergodic decomposition of measures, it suffices to show that there exists a unique $f$-invariant measure $\mu$. In fact, if $\mu$ is an $f$-invariant measure, then for $\mu$-almost every point $x \in \ZZ_p^d$ and every continuous function $g: \ZZ_p^d \to \RR$ the sequence of Birkhoff averages $ \frac{1}{n} \sum_{i=1}^{n}g(x_n)$ converges to $ \int g \ d\mu$. On the other hand, (3) implies that the same Birkhoff sum must converge to $\int g d\nu$, where $\nu$ is the Haar measure on $\ZZ_p^d$. It follows that $\nu=\mu$, proving the claim. 

Thus, it remains to prove that (1) implies (4). Let $N \in \mathbb{N}$ and consider an arbitrary $p$-adic disc $\mathrm{Disc}_p(z,p^{-k})$. Within each block of $p^{dk}$ consecutive elements of the sequence, there is exactly one element contained in $\mathrm{Disc}_p(z,p^{-k})$. This follows from the one-to-one correspondence between such discs and the residue classes in $\mathbb{Z}_p^d/p^k\mathbb{Z}_p^d$ together with the ergodicity of $f$, which ensures that each residue class in $\mathbb{Z}_p^d /p^k\mathbb{Z}_p^d$ is represented exactly once within every block of length $p^{dk}$ according to Theorem~\ref{thm:Anashin}. If we set $r= \lfloor N/p^{dk} \rfloor$, there are thus either $r$ or $r+1$ elements contained in $\mathrm{Disc}_p(z,p^{-k})$. Hence, it follows that
\begin{align*}
\left| \frac{\#\left( \textrm{Disc}_p(z,p^{-k}) \cap \{ x_1,\ldots,x_N \}\right)}{N} - \frac{1}{p^{dk}} \right| & \leq \frac{1}{N}.
\end{align*}
It follows that the trajectory is a low-discrepancy sequence. 
\end{proof}

 Theorem~\ref{thm:ergodic:1d} reduces the question of constructing dynamical low-discrepancy sequences to that of finding ergodic transformations of $\ZZ_p^d$. 
A complete characterization of {\it linear} ergodic functions can be given as follows. For $a, b \in \ZZ_p$, the linear function $f_{a,b}(x)=ax+b$ is ergodic if and only if $b$ and $p$ are relatively prime and $a \equiv 1 \pmod{p}$ for $p>2$ and $a \equiv 1 \pmod{4}$, $b$ odd if $p=2$ respectively, see \cite[Proposition~1.5]{Ana06}. The ergodicity of $1$-dimensional affine maps is discussed in detail in \cite{FLYZ07} and for polynomials in \cite{FL11}.

Furthermore, note that $f(x)=ax+b$ is a permutation polynomial $\bmod\ p$ (but not necesserily $\bmod\ p^2$) if $a$ and $p$ are relatively prime, but it is only ergodic under the above mentioned conditions. Hence, for linear transformations, being ergodic is strictly stronger than being a permutation polynomial $\bmod\ p$, compare Theorem~\ref{thm:permutation}. At the same time, if $f$ is an ergodic polynomial, then it is necessarily also a permutation polynomial $\bmod\ p^2$: As $f$ is ergodic, it is also transitive $\bmod\ p^2$, see Theorem~\ref{thm:Anashin}. Hence $f^i(x)$ for $i=1,\ldots,p^2$ lie in all residue classes $\bmod\ p^2$. Since $f$ is $1$-Lipschitz, it satisfies $f(i) \equiv f(i + p^2) \bmod\ p^2$. Hence the values $f(1),\ldots,f(p^2)$ also occupy all residue classes $\bmod\ p^2$. In other words, $f$ is a permutation polynomial $\bmod\ p^2$. The stronger claim that $x_n = f(n)$ is a low-discrepancy sequence can be shown by proving that $f$ is then a permutation polynomial $\bmod\ p^k$ for all $k \in \mathbb{N}$, see \cite{Noe65,Wei25}.

\section{Idempotent decomposition over $\ZZ_p$} \label{sec:linalg}
The main goal of this section is to prove Theorem \ref{lem:decom}, which is a statement about decomposition of large power of matrices over $\ZZ_p$. Because $\ZZ_p$ is not a field and its field of fractions $\QQ_p$ is not algebraically closed, standard arguments from linear algebra over algebraically closed fields do not always apply and must be replaced by more careful considerations. The results obtained here are also of independent interest.

\begin{lemma}\label{lem:inv}
Let $C \in \M_d(\ZZ_p)$ be such that the reduction modulo $p$ $\pi_1(C)$ of $C$ has non-zero determinant. Then $C$ is invertible in 
$\M_d(\ZZ_p)$, that is, there exists $C' \in \M_d(\ZZ_p)$ with $CC'=C'C=I$. 
\end{lemma}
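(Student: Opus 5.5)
The plan is to reduce the question to invertibility of the scalar $\det C$ in $\ZZ_p$ and then use the adjugate matrix. Since reduction modulo $p$ is a ring homomorphism $\ZZ_p \to \ZZ/p\ZZ$, and the determinant is a polynomial with integer coefficients in the entries, we have $\pi_1(\det C) = \det \pi_1(C)$. By hypothesis this is non-zero in $\ZZ/p\ZZ$, so $v_p(\det C) = 0$. In other words, the first $p$-adic digit $a_0$ of $\det C$ is non-zero.

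The next step is to check that every $u \in \ZZ_p$ with $u \not\equiv 0 \pmod p$ is a unit of $\ZZ_p$. I would argue through the inverse limit description $\ZZ_p=\varprojlim \ZZ/p^n\ZZ$. For each $n$, the residue $\pi_n(u)$ is coprime to $p$, so it has a unique inverse $w_n$ in $\ZZ/p^n\ZZ$. By uniqueness, the reductions satisfy $\theta_{m,n}(w_m)=w_n$, so $(w_n)$ is a compatible system and defines $w\in\ZZ_p$ with $uw=1$. An alternative is to write $u=a_0(1-py)$ and invert $1-py$ by the geometric series $\sum_k (py)^k$, which converges $p$-adically. This step is the only place where a little care is needed, since $\ZZ_p$ is not a field. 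Even here the difficulty is minor, because compatibility follows directly from uniqueness of inverses modulo $p^n$.

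Finally, let $\operatorname{adj}(C)\in \M_d(\ZZ_p)$ be the adjugate matrix. Its entries are signed $(d-1)\times(d-1)$ minors of $C$, so they lie in $\ZZ_p$. The identity $C\,\operatorname{adj}(C)=\operatorname{adj}(C)\,C=\det(C)\, I$ holds over any commutative ring. Setting $C' = (\det C)^{-1}\operatorname{adj}(C)$ therefore gives $CC'=C'C=I$ with $C'\in\M_d(\ZZ_p)$, which proves the lemma.
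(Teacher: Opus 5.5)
Your proof is correct and follows the paper's argument essentially verbatim: $\pi_1(\det C)=\det\pi_1(C)\neq 0$, so $\det C$ is a unit of $\ZZ_p$, and $C'=(\det C)^{-1}\operatorname{adj}(C)$ then has entries in $\ZZ_p$. The only addition is your justification that elements of $\ZZ_p$ not divisible by $p$ are units, which the paper takes for granted; your inverse-limit argument handles this cleanly, whereas the geometric-series variant as written tacitly presupposes $a_0^{-1}\in\ZZ_p$ and would first need multiplication by an integer inverse of $a_0$ modulo $p$.
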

\begin{proof}
Since $\det$ is defined by an integral polynomial in the coefficients of a matrix and the reduction map $ \pi_1: \ZZ_p \to\ZZ/p\ZZ$ is a ring homomorphism, we have that $ \pi_1(\det C)= \det ( \pi_1(C))$. Hence, the assumption $\pi_1(\det C) \neq 0$ implies that $\det C \not\in p \ZZ_p$, and in particular $C$ has an inverse in 
$\M_d(\QQ_p)$ given by 
\[ C^{-1}= \frac{1}{\det C } \textrm{adj}(C) \]
Note that all entries of $\textrm{adj}(C)$ are polynomials with integer coefficients in entries of $C$, and hence $\textrm{adj}(C) \in \M_d(\ZZ_p)$. Since $\det C$ is not divisible by $p$, we deduce $(\det C)^{-1} \in \ZZ_p$. In conclusion,  $C^{-1}\in \M_d(\ZZ_p)$. 
\end{proof}

\noindent
{\it Rational functions of a matrix with entries in  $\ZZ_p^d$}. Let $B \in \M_d(\ZZ_p)$ and let $f(X)$ be a polynomial with coefficients in $\ZZ_p$. Then we say that $C:=f(B) \in \M_d(\ZZ_p)$ is a polynomial function of $B$. It is clear that if $C_1$ and $C_2$ are polynomial functions of $B$ then $C_1C_2=C_2C_1$. Suppose $f(X)$ and $g(X)$ are polynomial functions with coefficient in $\ZZ_p$ and $\det g(B) \not\equiv 0 \pmod{p}$. We say that $C:= f(B) g(B)^{-1}$ is a rational function of $B$.  Note that that $g(B)^{-1} \in \M_d(\ZZ_p)$ by Lemma~\ref{lem:inv}, and hence $C$ is an elements of $\M_d(\ZZ_p)$. If $C_1$ and $C_2$ are rational functions of $B$, then it is easy to see that $C_1$ and $C_2$ commute. Moreover, $C_1+C_2$ and  $C_1C_2$ are both rational functions of $B$. In particular, rational functions in $B$ form a 
$\ZZ_p$-algebra. 

\medskip

\noindent
\subsection{A decomoposition for matrices in $\M_d(\ZZ_p)$}
Recall that, by the Ellis–Numakura lemma, every compact semigroup contains an idempotent element. Given a matrix $A \in \M_d(\ZZ/p\ZZ)$, applying this to the semigroup defined as the closure of $ \{ A^n: \, n \ge 1\}$, we deduce that there exist a subsequence $n_i$ with $A^{n_i} \to S$ and $S^2=S$. In the following lemma, we will prove a finitary version of this fact, and apply it to introduce invariants of $A$ that will be used in the remainder of this paper.

\begin{lemma}\label{lem:decom}
Let $A\in \M_d(\mathbb Z_p)$. Then there exist an integer $m\ge 1$,
an idempotent matrix
\[
S\in \M_d(\mathbb Z_p), \qquad S^2=S,
\]
and a matrix $M \in \M_d(\mathbb Z_p)$ such that
\[
A^m=S+pM
\]
and
\[
SM=MS.
\]
In particular, the reduction $ \overline{S}$ of $S$ modulo $p$ is diagonalizable and its only possible eigenvalues are $0$ and $1$. The rank of $\overline{S}$ and 
$\ker \overline{S}$  only depend on $A$ and are independent of the choice of $m$ and the decomposition above. They will be denoted by $\er(A)$ and  $\uker(A)$. 
\end{lemma}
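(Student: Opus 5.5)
The plan is to build $S$ as a rational function of $A$, and more precisely as a $p$-adic limit of powers of $A$; then it commutes with $A$ and with every polynomial in $A$.

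First, the residue. The reduction $\overline{A}\in \M_d(\ZZ/p\ZZ)$ lies in a finite semigroup, so there is $m_0\ge 1$ with $\overline{A}^{m_0}$ idempotent. Indeed, the powers are eventually periodic, say $\overline{A}^{k+T}=\overline{A}^k$ for $k\ge D$. Taking $m_0$ to be a multiple of $T$ with $m_0\ge D$ gives $\overline{A}^{2m_0}=\overline{A}^{m_0}$. Set $B=A^{m_0}$. Then $B^2\equiv B \pmod p$.

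Second, lift the idempotent. I would use the sequence $S=\lim_{k\to\infty} B^{p^k\cdot c}$ for suitable $c$, or equivalently the standard Newton-type lift of idempotents $e\mapsto 3e^2-2e^3$. Since this iteration is a polynomial in $B$, everything stays in the commutative $\ZZ_p$-algebra $\ZZ_p[B]$. If $e^2-e\equiv 0 \pmod{p^j}$, then $e'=3e^2-2e^3$ satisfies $e'^2-e'\equiv 0\pmod{p^{2j}}$ and $e'\equiv e\pmod{p^j}$. This is the usual check, using $e'^2-e'=(e^2-e)^2(4e^2-4e-3)$. So the iterates converge in the closed subring $\overline{\ZZ_p[B]}$ to an idempotent $S$ with $S\equiv B\pmod p$.

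Now take $m=m_0$ and $M=p^{-1}(A^{m}-S)\in \M_d(\ZZ_p)$. Both $A^m$ and $S$ lie in the commutative closed algebra $\overline{\ZZ_p[A]}$, so $SM=MS$. This is the main obstacle: one must ensure commutation, and working inside the closure of $\ZZ_p[A]$ handles it cleanly. The closure is commutative because multiplication is continuous.

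Third, the remaining claims. Since $\overline{S}^2=\overline{S}$, the minimal polynomial of $\overline S$ divides $x^2-x=x(x-1)$, which has distinct roots in $\mathbb{F}_p$. Hence $\overline{S}$ is diagonalizable with eigenvalues in $\{0,1\}$. For independence, note that $\overline{S}=\overline{A}^m$ is forced: it is the reduction of $A^m$. So it suffices to show that any two exponents $m,m'$ with $\overline A^m,\overline A^{m'}$ idempotent give the same idempotent. In a finite cyclic semigroup $\{\overline A^n\}$ there is a unique idempotent. The elements $\overline A^n$ with $n\ge D$ form a cyclic group of order $T$, whose identity is the only idempotent, and any idempotent power $\overline A^n$ must lie in this group since $\overline A^n=\overline A^{nk}$ for large $k$. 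Therefore $\overline S$, and with it its rank $\er(A)$ and kernel $\uker(A)$, depends only on $A$.
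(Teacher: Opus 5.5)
Your proof is correct. Its skeleton matches the paper's: find $m$ with $\overline{A}^m$ idempotent, Hensel-lift inside the commutative algebra generated by $B=A^m$ with a quadratically convergent iteration, get $SM=MS$ by passing to the limit, and deduce independence from the reduction modulo $p$. The ingredients differ, though.

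For the residue step, the paper puts $\overline{A}$ into Jordan form over $\overline{\mathbb{F}}_p$ and takes $m=m_0p^r$ with $p^r\ge d$, so that $\binom{m}{j}\equiv 0 \pmod p$ kills the nilpotent parts. Your pigeonhole argument in the finite semigroup $\{\overline{A}^n\}$ is shorter and needs no extension fields; it is exactly the finitary Ellis--Numakura statement the paper alludes to. The paper's version, in exchange, gives an explicit exponent in terms of $d$ and the multiplicative orders of the eigenvalues.

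For the lift, the paper uses the true Newton map $X\mapsto X^2(2X-I)^{-1}$. That is why it needs Lemma~\ref{lem:inv}, the notion of rational functions of $B$, and the extra inductive hypothesis that $2X_r-I$ stays invertible. Your map $e\mapsto 3e^2-2e^3=e-(e^2-e)(2e-I)$ is the same Newton step with $(2e-I)^{-1}$ replaced by $2e-I$. This is harmless because $(2e-I)^2=I+4(e^2-e)$. So you keep quadratic convergence, avoid all inversions, and see at once that $S\in\mathbb{Z}_p[A^m]$; this is a finitely generated, hence already closed, $\mathbb{Z}_p$-module, so the closure is not even needed.

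Your uniqueness-of-idempotent argument in the cyclic semigroup is equivalent to the paper's slightly quicker observation that $\overline{A}^{m_1m_2}$ equals both $\overline{S}_1$ and $\overline{S}_2$.

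The aside about $\lim_k B^{p^k c}$ is never used and you do not justify it. It does hold with $c=1$, a posteriori from $B^n=S(I+pM)^n+(I-S)(pM)^n$, but nothing in your argument depends on it.
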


\begin{proof}
We start the proof by establishing an auxiliary statement for matrices over finite fields. 
Let $k$ be a finite field of characteristic $p$, and let $T\in \M_d(k)$. We claim that there exists $m\ge 1$ such that $T^m$
is diagonalizable over $k$ with eigenvalues only $0$ and $1$. Indeed, after extending scalars to an algebraic closure $\overline{k}$, we can put $T$ into Jordan normal form. Each Jordan block has the form $J=\lambda I+N$,  where $N$ is nilpotent and $N^d=0$. Since all eigenvalues of $T$ lie in some finite extension of $k$, every nonzero eigenvalue $\lambda$
has finite multiplicative order. Choose $m_0\ge 1$ such that $\lambda^{m_0}=1$ for every nonzero eigenvalue $\lambda$ of $T$.  
Choose $r$ large enough so that  $p^r\ge n$ and set $m= m_0 p^r$. In order to determine $T^m$, we compute $J^m$ for each Jordan block $J$ corresponding 
to an eigenvalue $ \lambda$. For $\lambda=0$, we have 
$J^m=N^m=0,$ because $m\ge d$. If $\lambda\neq 0$, then
\[
J^m=(\lambda I+N)^m
=\sum_{j=0}^{d-1}\binom{m}{j}\lambda^{m-j}N^j.
\]
Since $m$ is divisible by $p^r$ with $p^r\ge d$, we have
\[
\binom{m}{j}\equiv 0 \pmod p
\qquad
\text{for } 1\le j<d.
\]
Hence all the terms involving $N^j$ with $j\ge 1$ vanish, and so $J^m=\lambda^m I=I.$ Thus every Jordan block of $T^m$ is either $0$ or $I$. Therefore,
$T^m$ is  diagonalizable with eigenvalues only $0$ and $1$. By applying this claim to the reduction $\overline{A}\in M_d(\mathbb F_p)$, it follows that there exists $m\ge 1$ such that $\overline{B}:=\overline{A}^{\,m}$
is diagonalizable with eigenvalues only $0$ and $1$. In particular,
$\overline{B}^2=\overline{B}.$
Set $ B:=A^m.$ It follows that  $B^2-B\in pM_d(\mathbb Z_p).$ 
We now aim to construct an idempotent lift $S \in M_d(\mathbb Z_p)$ of $\overline{B}$ such that $B-S$ commutes with $S$. If one disregards this additional commutativity requirement, the existence of such an idempotent lift follows directly from Hensel's lemma. In the present setting, however, we will revisit the proof of Hensel's lemma and verify that the lifting procedure can be carried out in such a way that the desired commutativity condition is preserved. First note that $\overline{B}^2=\overline{B}$ implies 
\[
(2\overline{B}-I)^2
=
4\overline{B}^2-4\overline{B}+I
=
I.
\]
In particular, we know that  $2\overline{B}-I$ is invertible in $\M_d(\mathbb F_p)$. Lemma~\ref{lem:inv} implies that 
$2B-I$ is invertible over $\ZZ_p$. 

We now recursively define a sequence $(X_r)_{r \ge 0}$ of matrices in $\M_d(\ZZ_p)$ with the following properties:

\begin{enumerate}
\item For every $r\ge 0$, the matrix $X_r$ is a rational function in $B$.
\item For all $r \ge 0$, we have $ X_{r}^2-X_r \equiv 0 \pmod{ p^{2^r} }$.
\item For all $r \ge 0$, we have $X_{r+1} \equiv X_r \pmod{ p^{2^r}}$.
\item  $2X_r-I$ is invertible in $\M_d(\ZZ_p)$ for all $r \ge 0$. 
\end{enumerate}

Set $X_0=B$, and note that $X_0$ satisfies (1)--(4) above. Suppose for some value of $r \ge 0$, the matrix
$X_r \in \M_d(\ZZ_p)$ is constructed so that the above properties are satisfied. Let $ \overline{X_r}$ denote the reduction of $X_r$ modulo $p$. We note that 
\[
(2\overline{X_r}-I)^2
=
4\overline{X_r}^2-4\overline{X_r}+I
=
I.
\]
Lemma~\ref{lem:inv} implies that $2\overline{X_r}-I$ is invertible in $\M_d(\ZZ_p)$. 
It follows that $X_{r+1}$ defined by 
\begin{equation}\label{defx}
X_{r+1}= {X_r^2}{(2X_r-I)^{-1}}.  
\end{equation}
is an element of $\M_d(\ZZ_p)$. 
It is also clear that $X_{r+1}$ is given by a rational function in $B$.  We have thus established (1) for $r+1$.  Note also that 
  
\begin{equation}\label{x-1}
 X_{r+1}- I = (X_r^2- (2X_r- I ) )
(2X_r- I)^{-1}= (X_r- I)^2(2X_r-I)^{-1}.
\end{equation}

Multiplying \eqref{defx} and \eqref{x-1} yields
\[
X_{r+1}^2-X_{r+1}= (X_r^2-X_r)^2(2X_r-I)^{-2}.
\]

By the induction hypothesis we have that $X_r^2-X_r\in p^{2^r} \M_d(\mathbb Z_p)$, and that $2X_r-I$ is invertible in $\M_d(\mathbb Z_p)$. These imply that 
\[
X_{r+1}^2-X_{r+1}
\in
p^{2^{r+1}}\M_d(\mathbb Z_p),
\]
i.e., (2). A similar computation shows that,
\[
X_{r+1}-X_r
=
-(X_r^2-X_r)(2X_r-I)^{-1},
\]
which, in turn, implies that
\[
X_{r+1}-X_r
\in
p^{2^r}\M_d(\mathbb Z_p).
\]
Finally, claim (4) follows immediately from (3). 

Property (3) shows that $(X_r)_{r \in \mathbb{N}}$ is a Cauchy sequence in $\M_d(\mathbb Z_p)$. Since $\M_d(\ZZ_p)$ is a complete metric space, this sequence has a limit $S \in \M_d(\ZZ_p)$. Using property (2) and passing to the limit we deduce that $S^2=S$. In particular,  $\overline{S}$ is diagonalizable with eigenvalues only $0$ and $1$. Write $B=S+pM$ for some $M\in \M_d(\mathbb Z_p)$. It follows from (1) in the construction  that for every $r \ge 1$, we have $X_rB=BX_r$. Passing to the limit again we deduce that $SB=SB$, and hence $SM=MS$. 

Let us now show that the kernel and rank of $S$ are independent of the decomposition. Suppose 
$A^{m_1}=S_1+ pM_1$ and $A^{m_2}= S_2+ pM_2$ are decompositions as above. First, note that for every $k_1 \ge 1$, we have 
\[ A^{m_1k_1} = (S_1+ pM_1)^{k_1} = S_1^{k_1}+ pN_1= S_1+ pN_1,\]
for some matrix $N_1 \in \M_d(\ZZ_p)$. This implies that $A^{m_1k_1} \equiv S_1 \pmod{ p }$. 
By a similar reasoning, we have that for every 
$k_2 \ge 1$, we have $A^{m_2k_2} \equiv S_2  \pmod{ p }$. Now, set $k_1=m_2$ and $k_2=m_1$. We deduce that $A^{m_1m_2}$ is congruent to both $S_1$ and $S_2$ modulo $p$. This implies that 
$S_1 \equiv S_2 \pmod{ p }$. Especially, this shows that $S_1$ mod $p$ and $S_2$ mod $p$ have the same rank and the same kernel. This proves the last claim and finishes the proof of the theorem. 
\end{proof}

\begin{remark}\label{rem:con}
Note that, for $S \in M_d(\ZZ_p)$, the property of being a projection modulo $p$ depends only on the congruence class of $S$ modulo $p$. In other words, if $S \equiv T \pmod p$ and $S$ is a projection modulo $p$, then $T$ is also a projection modulo $p$. We will use this fact without further comment in the sequel.

\end{remark}

\subsection{Decomposition for idempotents}
The next lemma will be used in the proof of Proposition \ref{thm:intrinsic-kernel-idempotent-part}. Although standard over fields, we include a proof that remains valid in our setting.

\begin{lemma} \label{lem:submodules}
Let $S \in \M_d(\ZZ_p)$ satisfy $S^2=S$. Then, there exists a decomposition
$\ZZ_p^d= V^1 \oplus V^0$ into $\ZZ_p$-submodules such that $Sv=v$ for all $v \in V^1$ and $Sv=0$ for all $v \in V^0$.
\end{lemma}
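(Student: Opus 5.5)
We take the standard decomposition $v = Sv + (v - Sv)$ and check that it works over $\ZZ_p$ exactly as over a field. Set
\[
V^1 := S\ZZ_p^d = \operatorname{im} S, \qquad V^0 := (I-S)\ZZ_p^d = \ker S .
\]
Both are $\ZZ_p$-submodules of $\ZZ_p^d$, being images, respectively kernels, of $\ZZ_p$-linear maps $\ZZ_p^d\to\ZZ_p^d$ given by matrices in $\M_d(\ZZ_p)$.

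First we verify the action of $S$ on each piece. If $v = Sw$ with $w\in\ZZ_p^d$, then $Sv = S^2w = Sw = v$, so $S$ is the identity on $V^1$. Likewise, if $v=(I-S)w$, then $Sv = (S-S^2)w = 0$. This also shows that $(I-S)\ZZ_p^d\subseteq \ker S$. For the reverse inclusion, $Sv=0$ gives $v=(I-S)v$, so the two descriptions of $V^0$ agree.

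Next we show the sum is direct. For any $v\in\ZZ_p^d$ we have $v = Sv + (I-S)v$, and here $Sv\in V^1$ and $(I-S)v\in V^0$ have entries in $\ZZ_p$ because $S\in\M_d(\ZZ_p)$. Hence $\ZZ_p^d = V^1 + V^0$. If $v\in V^1\cap V^0$, then $v = Sv = 0$, so the intersection is trivial and $\ZZ_p^d = V^1\oplus V^0$.

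There is no real obstacle. The only place where working over $\ZZ_p$ rather than a field could matter is integrality of the projections. This is automatic, since $S$ and $I-S$ have coefficients in $\ZZ_p$ and no division is ever needed.

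If freeness of $V^1$ and $V^0$ is wanted later, it follows because each is a direct summand of the free module $\ZZ_p^d$ over the PID $\ZZ_p$. Moreover, their ranks equal the rank of $S$ and of $I-S$ respectively, and also the ranks of their reductions modulo $p$.
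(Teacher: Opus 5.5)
Your proof is correct and is essentially the paper's argument. It uses the same choice $V^1=\operatorname{Im} S$ and $V^0=(I-S)\ZZ_p^d$, the same splitting $v=Sv+(I-S)v$, and the same check that $S$ acts as the identity on $V^1$ and as zero on $V^0$, which gives a trivial intersection. Your additional remarks (that $V^0=\ker S$, that both summands are free, and that their ranks match those of $S$, $I-S$ and their reductions mod $p$) are correct but not needed for the lemma.
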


\begin{proof}\label{lem:akb}
Let $V^1= \textrm{Im} (S)= \{ Sw: \, w \in \ZZ_p^d \}$ and $V^0:= \{ (I-S) w: \, w \in \ZZ_p^d \}$. It is easy to see that both $V^0$ and $V^1$ are $\ZZ_p$-submodules. Note that for every 
$v \in \ZZ_p^d$ we have the decomposition
\[ v= Sv+ (I- S) v \in V^1 + V^0.\]
We now show that $V^1 \cap V^0 = \{ 0 \}$. Suppose $v \in V^1 \cap V^0$. Then  $v= Sw_1= (I-S)w_2$. Hence, $$v= Sw_1= S^2w_1= S(I-S)w_2= 0.$$
Finally, for every $v \in V^1$ we have 
$Sv= S(Sw)= S^2w= Sw= v$ and for every $v \in V^0$, we have
$Sv= S( I- S)w= (S-S^2)w=0.$
This establishes the claim. 
\end{proof}

The next proposition characterizes $\ker S$ entirely in terms of $A$.

\begin{proposition}\label{thm:intrinsic-kernel-idempotent-part} 
Let $A\in \M_d(\ZZ_p)$, and suppose $m\geq 1$ is such that  $A^m$ is idempotent mod $p$ and has a decomposition $A^m=S+pM $ with $S,M\in M_d(\ZZ_p)$ satisfying $S^2=S$ and $SM=MS.$ Then 
 \[ \ker S=\{v\in \ZZ_p^d: A^n v\to 0 \text{ as } n\to\infty\}. \]
\end{proposition}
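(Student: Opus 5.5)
We prove the two inclusions separately, using the decomposition $\ZZ_p^d=V^1\oplus V^0$ of Lemma~\ref{lem:submodules}, where $V^1=\mathrm{Im}(S)$ and $V^0=\ker S=\mathrm{Im}(I-S)$. The basic tool is a formula for powers of $A^m$. Since $S$ and $M$ commute and $S^2=S$, the binomial expansion gives
\[
A^{mk}=(S+pM)^k=(I-S)(pM)^k+S(S+pM)^k .
\]
To see this, write $S+pM=S(S+pM)+(I-S)pM$. Because $S$ and $I-S$ are complementary commuting idempotents that both commute with $M$, powers of this sum split along the decomposition.

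For the inclusion $\ker S\subseteq\{v: A^nv\to 0\}$, take $v\in\ker S$. Then $Sv=0$, and since $(pM)^k$ commutes with $S$, we get
\[
A^{mk}v=(pM)^k v=p^kM^kv\in p^k\ZZ_p^d .
\]
For a general exponent $n=mk+j$ with $0\le j<m$, we have $A^nv=A^j A^{mk}v$. The matrix $A^j$ has entries in $\ZZ_p$, so it does not decrease valuations, and hence $A^nv\in p^{\lfloor n/m\rfloor}\ZZ_p^d$. This tends to $0$.

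For the reverse inclusion, suppose $A^nv\to 0$ and decompose $v=v_1+v_0$ with $v_1=Sv\in V^1$ and $v_0\in V^0$. By the first inclusion $A^nv_0\to0$, so $A^nv_1\to 0$ as well, and it suffices to show that $v_1=0$. The key point is that the restriction of $A^m$ to $V^1$ is invertible. On $V^1$ the operator $A^m$ acts as $S(S+pM)=S+pSM$. Since $V^1$ is $M$-invariant ($MS=SM$), this restriction equals $\mathrm{Id}_{V^1}+pM|_{V^1}$.

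The main technical obstacle is making this invertibility argument rigorous on the submodule $V^1$. We handle it as follows. The series $\sum_{i\ge0}(-pM)^i$ converges in $\M_d(\ZZ_p)$ to an inverse $C$ of $I+pM$. The matrix $C$ commutes with $S$, because it is a limit of polynomials in $M$. Now suppose $v_1\neq 0$ and let $e=v_p(v_1)$ be the minimal valuation of its coordinates. Then
\[
A^{mk}v_1=S(I+pM)^kv_1=(I+pM)^kv_1 ,
\]
using $Sv_1=v_1$ and the fact that $S$ commutes with $I+pM$. Applying $C^k\in\M_d(\ZZ_p)$ recovers $v_1=C^kA^{mk}v_1$. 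Since matrices over $\ZZ_p$ cannot decrease valuation, $A^{mk}v_1$ has valuation at most $e$ for every $k$. This contradicts $A^{mk}v_1\to0$, so $v_1=0$ and $v=v_0\in\ker S$.

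Note that the argument only uses the relation $A^m=S+pM$ with $S^2=S$ and $SM=MS$; it does not use the assumption that $A^m$ is idempotent mod $p$.
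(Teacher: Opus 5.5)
Your proof is correct and follows essentially the paper's argument. You use the same splitting $\ZZ_p^d=\operatorname{Im}(S)\oplus\ker S$, the same contraction $A^{mk}v=p^kM^kv$ on $\ker S$ combined with $A^n=A^jA^{mk}$, and the same contradiction from $A^{mk}$ not raising the valuation of the $\operatorname{Im}(S)$-component. The only difference is minor: you obtain that non-contraction by inverting $I+pM$ with a Neumann series, whereas the paper writes $w=p^kw_0$ and uses $B^nw_0\equiv w_0 \pmod p$, which tacitly requires $w_0\in\operatorname{Im}(S)$ (true by torsion-freeness), a point your version avoids.
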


\begin{proof} 
Put $B=A^m$. Since $B=S+pM$ and $SM=MS$, both $\ker S$ and $\operatorname{Im} S$ are invariant under $B$.
Let $v\in \ker S$. Then \[Bv=(S+pM)v=pMv. \] Since $MS=SM$, the submodule $\ker S$ is also invariant under $M$. Hence, by induction, $ B^n v=p^n M^n v. $
Because $M^n v\in \ZZ_p^d$ for all $n$, we get $ B^n v\to 0. $ Now every sufficiently large power of $A$ has the form $ A^{qm+r}=A^rB^q$ with $ 0\leq r<m$. 
From here it follows that $ A^n v\to 0$ as $n \to \infty$.  This proves the inclusion 
\[ \ker S\subseteq \{v\in \ZZ_p^d: A^n v\to 0\}. \]

In order to prove the reverse inclusion, let $w\in \operatorname{Im} S$. Since $S$ is idempotent, we have $Sw=w$, which implies that $Bw=(S+pM)w=w+pMw.$ From here we deduce 
$Bw\equiv w \pmod p.$

By induction, $ B^n w\equiv w \pmod p $ for all $n\geq 1$. More generally, if $w\neq 0$, choose $k\geq 0$ maximal such that 
$ w\in p^k\ZZ_p^d. $ Then $w=p^k w_0$ with $w_0\notin p\ZZ_p^d$, and $ B^n w=p^kB^n w_0\equiv p^k w_0 \pmod {p^{k+1}}. $ 
Hence $B^n w$ has the same $p$-adic order as $w$ for every $n$. In particular, $ B^n w\nrightarrow 0.$ 
Now suppose $v \in \ZZ_p^d$ satisfies $A^nv \to 0$. Then in particular $B^n v=A^{mn}v\to 0.$
Write $v=v^1+ v^0$ as in Lemma~\ref{lem:submodules}.  
Both summands are $B$-invariant. By the above argument we have $v^1=0$. This implies that  $ v\in \ker S$, and completes the proof. 
\end{proof}

\section{Dynamics of  Multi-Variable Polynomial Maps} \label{sec:dynamics}
By a {\it polynomial map on} $\ZZ_p^d$, we refer to a function $g:\ZZ_p^d\to \ZZ_p^d$ that is given  coordinate-wise as $g=(g_1,\dots,g_d)$, where each $g_i\in \ZZ_p[X_1,\dots,X_d]$. Note that every  polynomial over $\ZZ_p$ can be expressed as a finite sum of monomials
\[
\sum_{\alpha} c_{\alpha}X^\alpha,
\]
where $\alpha=(\alpha_1,\dots,\alpha_d)\in \mathbb N^d$, $c_\alpha\in \ZZ_p$, and
$ X^\alpha=X_1^{\alpha_1}\cdots X_d^{\alpha_d}.$

\medskip

In this section, we are interested in studying the growth of the orbits of polynomial maps.  The study of the orbit of a polynomial map $g$ modulo $p^n$ reduces to studying the dynamics of $g$ at the $p$-adic scale $p^{-n}$. 
We start with the standard Taylor linear approximation. The proof is included for the sake of completeness.  

\begin{lemma}\label{taylor}
Let $g:\ZZ_p^d\to \ZZ_p^d$ be a polynomial map and $x \in \ZZ_p^d$. For every $n \ge 1$, and $t \in \ZZ_p^d$ we have
\[ g(x+p^n t)\equiv g(x)+p^n Dg(x)t\pmod {p^{2n}}. \]
\end{lemma}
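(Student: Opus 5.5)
The plan is to reduce to the case of a single monomial by linearity, and then to use the binomial expansion in each variable. Both sides of the claimed congruence are additive in $g$: if $g = g^{(1)} + g^{(2)}$, then $Dg = Dg^{(1)} + Dg^{(2)}$, and congruences modulo $p^{2n}$ add. It also suffices to work one coordinate $g_i$ at a time. So it is enough to treat a single scalar polynomial $h \in \ZZ_p[X_1,\dots,X_d]$, and then a single monomial $c_\alpha X^\alpha$. The constant $c_\alpha \in \ZZ_p$ factors out harmlessly.

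For the monomial, expand
\[
(x+p^n t)^\alpha=\prod_{j=1}^d (x_j+p^n t_j)^{\alpha_j}
\]
with the binomial theorem in each factor. Each factor equals
\[
x_j^{\alpha_j}+\alpha_j x_j^{\alpha_j-1}p^n t_j + p^{2n}(\cdots),
\]
where $(\cdots)$ lies in $\ZZ_p$ because the binomial coefficients are integers and $x_j,t_j\in\ZZ_p$. Multiplying these $d$ expressions, every product that uses two or more of the $p^n$-terms, or any $p^{2n}$-term, lies in $p^{2n}\ZZ_p$. What survives modulo $p^{2n}$ is
\[
x^\alpha + p^n\sum_j \alpha_j x_j^{\alpha_j-1}t_j\prod_{k\ne j}x_k^{\alpha_k}
= x^\alpha + p^n\,\nabla(X^\alpha)(x)\cdot t .
\]
Summing over monomials and over coordinates gives $g(x+p^n t)\equiv g(x)+p^n Dg(x)t \pmod{p^{2n}}$.

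An alternative, equivalent route is Taylor's formula over $\ZZ$: $h(x+y)=h(x)+\nabla h(x)\cdot y+\sum_{|\beta|\ge2} (\Delta_\beta h)(x)\, y^\beta$. Here $\Delta_\beta h=\partial^\beta h/\beta!$ has integer coefficients, since these are the coefficients of the polynomial $h(X+Y)$. Substituting $y=p^n t$ then makes every higher term divisible by $p^{2n}$.

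I do not expect a real obstacle. The one point needing care is integrality: dividing by $\beta!$ could introduce denominators divisible by $p$. I will avoid this by using the binomial expansion, or equivalently by noting that $\partial^\beta h/\beta!$ is exactly the coefficient polynomial of $Y^\beta$ in $h(X+Y)\in\ZZ_p[X,Y]$. With either argument all remainder terms lie in $p^{2n}\ZZ_p$.
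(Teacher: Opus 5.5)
Your proposal is correct and follows essentially the same route as the paper. You reduce to a single coordinate and a single monomial, expand each factor $(x_j+p^n t_j)^{\alpha_j}$ binomially, discard the terms with $r\ge 2$ and the cross terms of order $p^{2n}$, and then sum over monomials. The alternative you sketch via the integral coefficients of $Y^\beta$ in $h(X+Y)$ is also valid but not needed.
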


\begin{proof}
Write $g$ coordinate-wise as $g=(g_1,\dots,g_d)$, where each $g_i\in \ZZ_p[X_1,\dots,X_d]$. Let $n\geq 1$.  
Fix $1\leq i\leq d$.  We will show that
\[ g_i(x+p^n t)\equiv g_i(x)+p^n \sum_{j=1}^d \frac{\partial g_i}{\partial X_j}(x)t_j \pmod {p^{2n}}. \]
To prove the coordinate-wise congruence, consider first a single monomial
\[ X^\alpha=X_1^{\alpha_1}\cdots X_d^{\alpha_d}. \]
We have
\[ (x+p^n t)^\alpha = \prod_{j=1}^d (x_j+p^n t_j)^{\alpha_j}.\]
By the binomial theorem,
\[
(x_j+p^n t_j)^{\alpha_j} = x_j^{\alpha_j} + \alpha_j x_j^{\alpha_j-1}p^n t_j +
\sum_{r=2}^{\alpha_j} \binom{\alpha_j}{r}x_j^{\alpha_j-r}p^{nr}t_j^r.
\]
Since $n\geq 1$, every term with $r\geq 2$ is divisible by $p^{2n}$. Therefore,
\[ (x_j+p^n t_j)^{\alpha_j} \equiv x_j^{\alpha_j} + p^n \alpha_j x_j^{\alpha_j-1}t_j \pmod {p^{2n}}.
\]
Multiplying these congruences for $j=1,\dots,d$, and keeping only the terms up to first order in $p^n$, we obtain
\[
(x+p^n t)^\alpha \equiv x^\alpha + p^n \sum_{j=1}^d \alpha_j x_1^{\alpha_1}\cdots x_j^{\alpha_j-1}\cdots x_d^{\alpha_d}t_j \pmod {p^{2n}}.
\]
The terms involving products of two or more first-order factors are divisible by $p^{2n}$. Thus
\[
(x+p^n t)^\alpha \equiv x^\alpha + p^n \sum_{j=1}^d \frac{\partial X^\alpha}{\partial X_j}(x)t_j \pmod {p^{2n}}.
\]
As each $g_i$ is a sum of  finitely many multi-indices $\alpha$ appearing in $g_i$, the claim follows. 
\end{proof}

The following special case allows us to linearize the local dynamics and understand the dynamics of $g$ in terms of the dynamics of an affine map. 

\begin{corollary}\label{linearization}
Let $g:\ZZ_p^d\to \ZZ_p^d$ be a polynomial map and $x \in \ZZ_p^d$ such that $g(x)\equiv x \pmod {p^n}$. Write
\[
A_n:=Dg(x), \qquad b_n:=\frac{g(x)-x}{p^n}
\]
Then one has
\begin{equation} \label{eq:Taylor_mod_pn+1}
g(x+p^n t)\equiv x+p^n(A_n t+b_n)\pmod {p^{n+1}},
\end{equation}
for $t\in \ZZ_p^d$. 
 Write $X_n$ for the set of all congruence classes modulo $p^{n+1}$ of points $y \in \ZZ_p^d$ with $y \equiv x \pmod{ p^n }$, or, equivalently, all balls of radius $p^{-(n+1)}$ included in the ball centered at $x$ of radius $p^{-n}$. Let $\tau: X_n \to (\ZZ/p\ZZ)^d $ be the bijection defined in 
Section~\ref{sec:prelim}. Then for every $x \in X_n$, we have $ g(x)= \tau^{-1} \circ \Phi \circ \tau(x)$, where
\[ 
\Phi: (\ZZ/p\ZZ)^d \to (\ZZ/p\ZZ)^d, \qquad  \Phi(t) := A_nt + b_n.
\]
\end{corollary}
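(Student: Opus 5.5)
The plan is to apply Lemma~\ref{taylor} with the point $x$ and to reduce the resulting congruence from modulus $p^{2n}$ down to $p^{n+1}$. Since $n \ge 1$, we have $2n \ge n+1$, so the lemma gives
\[
g(x+p^n t) \equiv g(x) + p^n Dg(x)t \pmod{p^{n+1}}.
\]
By hypothesis $g(x) \equiv x \pmod{p^n}$, so $b_n = (g(x)-x)/p^n$ lies in $\ZZ_p^d$ and $g(x) = x + p^n b_n$ holds exactly. Substituting this into the previous congruence yields $g(x+p^n t) \equiv x + p^n(A_n t + b_n) \pmod{p^{n+1}}$, which is \eqref{eq:Taylor_mod_pn+1}.

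For the conjugacy statement, take a class in $X_n$ and represent it as $y = x + p^n t$; under the bijection \eqref{tau}, $\tau(y) = [t] \in (\ZZ/p\ZZ)^d$. We must check three things about the congruence just proved.

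\begin{itemize}
\item \emph{$g$ maps $X_n$ into itself.} The right-hand side is congruent to $x$ modulo $p^n$, so $g(y) \equiv x \pmod{p^n}$.
\item \emph{$g$ is well defined on classes.} Because $g$ is $1$-Lipschitz, the class of $g(y)$ modulo $p^{n+1}$ depends only on the class of $y$ modulo $p^{n+1}$.
\item \emph{The class of $g(y)$ is the one predicted by $\Phi$.} Modulo $p^{n+1}$, the term $p^n(A_n t + b_n)$ depends only on $A_n t + b_n \bmod p$, and hence only on $t \bmod p$. Therefore $\tau(g(y)) = [A_n t + b_n] = \Phi(\tau(y))$, where $A_n$ and $b_n$ are read modulo $p$.
\end{itemize}

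Together these give $g = \tau^{-1} \circ \Phi \circ \tau$ on $X_n$.

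There is no real obstacle in this argument. The only points needing care are the inequality $2n \ge n+1$, which is what allows the quadratic remainder from Lemma~\ref{taylor} to be discarded, and the integrality of $b_n$. One should also interpret the statement's "$g(x)$ for $x \in X_n$" as $g$ applied to any representative of the class, with the base point fixed as the original $x$ from the hypothesis.
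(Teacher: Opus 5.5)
Your proof is correct and follows the paper's argument: apply Lemma~\ref{taylor}, substitute $g(x)=x+p^n b_n$, and reduce from modulus $p^{2n}$ to $p^{n+1}$ using $2n\ge n+1$. Your three-point check of the conjugacy with $\Phi$ just spells out what the paper says follows immediately from the discussion of $\tau$ in Section~\ref{sec:prelim}.
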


\begin{proof}
We know from Lemma~\ref{taylor} that 
\[ g(x+p^n t)\equiv g(x)+p^n Dg(x)t\pmod {p^{2n}}. \]
The definition of $b_n$ implies that $g(x)=x+p^n b_n$. Hence, this becomes
\[ g(x+p^n t)\equiv x+p^n b_n+p^n Dg(x)t\pmod {p^{2n}}. \]
This proves the first claim.  The additional statement about conjugacy with the affine map follows immediately from the discussion in Section~\ref{sec:prelim}.
\end{proof}

Note that since $A_n \in \M_d(\ZZ_p)$, the congruence class modulo $p^{n+1}$ of the expression on the right-hand side of \eqref{eq:Taylor_mod_pn+1} depends {\it only} on the congruence class of $t \in \ZZ_p^d/p \ZZ_p^d$, which can be naturally identified with $(\ZZ/p\ZZ)^d$. Thus, the dynamics of $g$ modulo $p^{n+1}$  can be naturally identified with that of the affine map from $ (\ZZ/p\ZZ)^d$ to itself defined by $ \overline{t} \mapsto A_t \overline{t}+ b_n$, where $ \overline{t}$ is the class of $t \in \ZZ_p^d$ modulo $p \ZZ_p^d$. In the rest of this section, this identification will be frequently used without further remarks. 

\begin{lemma}\label{trivial}
Suppose $F: \ZZ_p^d \to \M_k(\ZZ_p)$ is a polynomial map with coefficients in $\ZZ_p$. Then $F$ is $1$-Lipschitz. In particular, if $x \equiv y  \pmod{ p^n }$ then $ F(x)  \equiv F(y) \pmod{ p^n }$. Furthermore, for every polynomial map $f: \ZZ_p^d \to \ZZ_p^d$, if 
 $x \equiv y  \pmod{ p^n }$, then $ Df(x)  \equiv Df(y) \pmod{ p^n }$.
\end{lemma}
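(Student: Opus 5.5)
The proof reduces everything to a single-monomial computation, using that reduction modulo $p^n$ is a ring homomorphism $\ZZ_p \to \ZZ/p^n\ZZ$. Each entry $F_{ij}$ of $F$ is a polynomial in $\ZZ_p[X_1,\dots,X_d]$. It therefore suffices to show that for a single polynomial $h\in \ZZ_p[X_1,\dots,X_d]$ and $x\equiv y \pmod{p^n}$ we have $h(x)\equiv h(y)\pmod{p^n}$. The matrix statement then follows entrywise, since congruence of matrices modulo $p^n$ means congruence of every entry.

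For a single $h$ the argument is immediate. Write $\pi_n$ for reduction modulo $p^n$, extended coordinatewise. Since $\pi_n$ is a ring homomorphism and the coefficients $c_\alpha$ are fixed, we get
\[
\pi_n(h(x)) = \sum_\alpha \pi_n(c_\alpha)\,\pi_n(x)^\alpha .
\]
This depends only on $\pi_n(x)$. Hence $\pi_n(x)=\pi_n(y)$ gives $\pi_n(h(x))=\pi_n(h(y))$. This is exactly the congruence statement for every $n\ge 1$, and it is equivalent to $|h(x)-h(y)|_p\le |x-y|_p$ by the definition of the metric.

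For $1$-Lipschitz, take $x\neq y$ and let $n=v_p(x-y)$ in the max-norm sense, so $x\equiv y \pmod{p^n}$. Then each entry satisfies $|F_{ij}(x)-F_{ij}(y)|_p\le p^{-n}=|x-y|_p$. The case $x=y$ is trivial.

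For the last claim, observe that $Df$ is the $d\times d$ matrix with entries $\partial f_i/\partial X_j$. Formal partial derivatives of polynomials in $\ZZ_p[X_1,\dots,X_d]$ again lie in $\ZZ_p[X_1,\dots,X_d]$, because $\partial X^\alpha/\partial X_j=\alpha_j X^{\alpha-e_j}$ and $\alpha_j\in\ZZ\subset\ZZ_p$. So $Df:\ZZ_p^d\to \M_d(\ZZ_p)$ is itself a polynomial map with $\ZZ_p$ coefficients, and the first part applies with $k=d$.

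There is no real obstacle here. The only point needing care is that the coefficients of the derivative remain $p$-adic integers, which holds since the multipliers $\alpha_j$ are ordinary integers; no division occurs.
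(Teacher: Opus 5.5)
Your proof is correct and is essentially the argument the paper intends. The paper only remarks that the proof is identical to the $\ZZ_p^d\to\ZZ_p^d$ case, namely the entrywise fact that polynomials with $\ZZ_p$-coefficients respect congruences modulo $p^n$ (because reduction is a ring homomorphism), together with the observation that $Df$ again has entries in $\ZZ_p[X_1,\dots,X_d]$, which you verify explicitly. One small gap: your congruence argument is stated for $n\ge 1$, so the Lipschitz step also needs the case $v_p(x-y)=0$. That case is trivial, since all values lie in $\ZZ_p$ and the required bound is just $\le 1$.
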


\begin{proof}
The proof is identical to the statement for maps from $\ZZ_p^d$ to $\ZZ_p^d$. 
\end{proof}

\begin{corollary}\label{dlip}
Suppose $f: \ZZ_p^d \to \ZZ_p^d$ is a polynomial map and $x \in \ZZ_p^d$ is a fixed point for $f$ mod $p^\ell$ for some $\ell \ge 1$. If $y=f^m(x)$ for some $m \ge 1$, then  $Df(y) \equiv Df(x) \pmod{ p^\ell }$. 
\end{corollary}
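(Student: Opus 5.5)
The plan is to first show that every point of the orbit of $x$ is congruent to $x$ modulo $p^\ell$, and then apply Lemma~\ref{trivial} to the Jacobian.

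\textbf{Step 1: the orbit stays in the residue class of $x$.} Since $x$ is a fixed point of $f$ modulo $p^\ell$, we have $f(x)\equiv x \pmod{p^\ell}$. We argue by induction on $m\ge 1$ that $f^m(x)\equiv x\pmod{p^\ell}$. The case $m=1$ is the hypothesis. For the inductive step, assume $f^{m}(x)\equiv x \pmod{p^\ell}$. Because $f$ is a polynomial map, Lemma~\ref{trivial} (applied with $k=1$ coordinatewise, or directly to $f$) shows that $f$ is $1$-Lipschitz. Hence
\[
f^{m+1}(x)=f(f^m(x))\equiv f(x)\equiv x \pmod{p^\ell}.
\]

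\textbf{Step 2: compare the Jacobians.} Set $y=f^m(x)$, so that $y\equiv x\pmod{p^\ell}$ by Step 1. The entries of $Df$ are the partial derivatives $\partial f_i/\partial X_j$. These are again polynomials with coefficients in $\ZZ_p$, because formal differentiation preserves integrality of the coefficients. Thus $Df:\ZZ_p^d\to \M_d(\ZZ_p)$ is a polynomial map, and the last assertion of Lemma~\ref{trivial} gives $Df(y)\equiv Df(x)\pmod{p^\ell}$.

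There is no real obstacle here. The only point to check is that $Df$ has $\ZZ_p$-coefficients, so that Lemma~\ref{trivial} applies, and that $1$-Lipschitzness of $f$ propagates the congruence $f(x)\equiv x$ along the whole orbit.
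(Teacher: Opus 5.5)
Your proof is correct and follows essentially the paper's argument: repeatedly apply the $1$-Lipschitz property from Lemma~\ref{trivial} to get $f^m(x)\equiv x \pmod{p^\ell}$, then apply the last part of that lemma to the polynomial map $Df$. The paper's proof also records, via the chain rule, that $Df^n(y)\equiv Df(x)^n \pmod{p^\ell}$; this is not part of the stated corollary, but it is the form used later.
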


\begin{proof}
Since $f(x) \equiv x  \pmod{ p^\ell }$, from a repeated application of Lemma~\ref{trivial} it follows that $y=f^m(x) \equiv x  \pmod{ p^\ell }$ for all $m \ge 1$. Applying the second part of Lemma~\ref{trivial} gives $Df(y) \equiv  Df(x) \pmod{ p^\ell }$.  
Hence by the chain rule,
\[ Df^n(y)= \prod_{i=0}^{n-1} Df( f^{i}(y)) \equiv   \prod_{i=0}^{n-1} Df( f^{i+m}(x)) \equiv \prod_{i=0}^{n-1} Df(x)= (Df(x))^n \pmod{p^\ell}. \]
\end{proof}

We now combine this simple observation with Lemma~\ref{lem:decom} to obtain the following.

\begin{lemma}\label{improvement}
Suppose $g$ is a polynomial map of $\ZZ_p^d$ and $x_0 \in \ZZ_p^d$ such that $g(x_0) \equiv x_0 \pmod{p} $. Then there exists $r \ge 1$ and $S, M \in \M_n(\ZZ_p)$ such that 
\begin{enumerate}
\item $ (Dg(x_0))^r= S+pM$, where $S^2=S$ and $SM=MS$. 
\item $Dg^r(x_0)$ is a projection modulo $p$, i.e. it satisfies $(Dg^r(x_0))^2 \equiv Dg^r(x_0) \pmod{p}$ . 
\end{enumerate}
\end{lemma}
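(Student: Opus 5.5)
Apply Lemma~\ref{lem:decom} to the matrix $A:=Dg(x_0)\in \M_d(\ZZ_p)$. This gives an integer $r\ge 1$, an idempotent $S$ with $S^2=S$, and a matrix $M$ with $SM=MS$ such that $A^r=S+pM$. That is exactly item (1), with $\M_n$ read as $\M_d$. The substance of the lemma is item (2): the same $r$ makes the derivative of the iterate $g^r$ at $x_0$ a projection modulo $p$.

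For (2), I would first relate $Dg^r(x_0)$ to $A^r$ modulo $p$. By the chain rule,
\[
Dg^r(x_0)=\prod_{i=0}^{r-1} Dg\bigl(g^{i}(x_0)\bigr),
\]
with the factors ordered as the chain rule dictates. The hypothesis $g(x_0)\equiv x_0 \pmod p$ says that $x_0$ is a fixed point of $g$ modulo $p^\ell$ with $\ell=1$. Since $g$ is $1$-Lipschitz (Lemma~\ref{trivial}), induction gives $g^i(x_0)\equiv x_0 \pmod p$ for all $i\ge 0$. The second part of Lemma~\ref{trivial} then gives $Dg(g^i(x_0))\equiv Dg(x_0) \pmod p$ for every $i$. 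This is precisely the computation carried out in the proof of Corollary~\ref{dlip}, with $m=0$ and $\ell=1$ (the case $i=0$ is trivial). Because all factors are congruent to the same matrix $A$, the ordering issue from noncommutativity disappears, and we get
\[
Dg^r(x_0)\equiv A^r = S+pM\equiv S \pmod p.
\]

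Finally, $S^2=S$ gives $\bigl(Dg^r(x_0)\bigr)^2\equiv S^2=S\equiv Dg^r(x_0)\pmod p$, since reduction modulo $p$ is a ring homomorphism on $\M_d(\ZZ_p)$. This is the invariance observation of Remark~\ref{rem:con}, and it proves (2).

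I expect no real obstacle. The only points needing care are that the chain rule produces an ordered product of Jacobians at different orbit points, and that those points agree with $x_0$ only modulo $p$. Both are handled by Lemma~\ref{trivial}, because the congruence modulo $p$ of every factor to $A$ makes the product congruent to $A^r$ regardless of order.
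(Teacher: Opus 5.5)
Your proposal is correct and follows the paper's route. You apply Lemma~\ref{lem:decom} to $A=Dg(x_0)$, use Lemma~\ref{trivial} with the chain rule to get $Dg^r(x_0)\equiv A^r\equiv S \pmod p$, and conclude from $S^2=S$ as in Remark~\ref{rem:con}; your explicit note that the order of the chain-rule product is irrelevant, because every factor is congruent to $A$ modulo $p$, makes precise a point the paper leaves implicit.
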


\begin{proof}
Set $A=Dg(x_0) \in \M_d(\ZZ_p)$. By Lemma~\ref{lem:decom} there exists $r \ge 1$ such that $A^r$ can be written as
$A^r= S+pM$ with $SM=MS$ and $S^2=S$. In particular, $S$ is a projection mod $p$. For $j \ge 1$, set $ x_j:= g^j(x_0)$. Since $x_1  \equiv x_0 \pmod{ p }$ and $g$ is $1$-Lipschitz, it follows that $x_j \equiv x_0 \pmod{ p }$ for all $j \ge 1$. As the derivative map $Dg: \ZZ_p^d \to \M_d( \ZZ_p)$ is a polynomial map with coefficients in $\ZZ_p$,  it follows from Lemma~\ref{trivial} that $ Dg(x_j)  \equiv Dg(x_0) \pmod{ p }. $ By applying the chain rule we deduce 
\[ D g^r( x_0)\equiv \prod_{j=0}^{r-1} Dg(x_j) \equiv  \prod_{j=0} ^r Dg(x_0) \equiv  A^r \equiv S \pmod{p}. \]
In view of Remark~\ref{rem:con} the claim follows.
\end{proof}

\subsection{Strong fixed points and period growth}
To bound the size of the orbit of a polynomial map modulo $p^n$, we must control both $\Per_{f_n}(x)$ and $\Tail_{f_n}(x)$. In this section, we focus on the former. The following definition will play a crucial role in the proof.

\begin{definition}\label{strong}
Suppose $f: \ZZ_p^d \to \ZZ_p^d$ is a polynomial map and let $n \ge 2$.  We say that a point $x \in \ZZ_p^d$ is a strong fixed point of $f$ if 
\begin{enumerate}
\item $f(x)  \equiv x \pmod{ p^2 }$, or, equivalently, if $x$ is fixed by $f_{2}$. 
\item There exists $S, M \in \M_d(\ZZ_p)$ satisfying $S^2=S$ and 
$SM=MS$ such that $Df(x) \equiv S+pM \pmod{ p^2 }$.
\end{enumerate}
\end{definition}

The next lemma shows that the property of being a strong fixed point is stable under perturbations and passing to $f$-orbit points. 

\begin{lemma}\label{simplestrongfixedpoin}
Suppose $f: \ZZ_p^d \to \ZZ_p^d$ is a polynomial map and let $x \in \ZZ_p^d$ be a strong fixed point of $f$. Then
\begin{enumerate}
\item If $y \equiv x \pmod{p^2}$ then $y$ is a strong fixed point of $f$. 
\item For every $k \ge 1$, also $f^k(x)$ is a strong fixed point of $f$. 
\end{enumerate}
\end{lemma}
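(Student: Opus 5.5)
The plan is to verify the two conditions of Definition~\ref{strong} directly, using only that $f$ and $Df$ are $1$-Lipschitz (Lemma~\ref{trivial}) and that condition (2) of the definition depends only on $Df(x)$ modulo $p^2$.

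\emph{Part (1).} Let $y \equiv x \pmod{p^2}$.
\begin{itemize}
\item[] For condition (1): by Lemma~\ref{trivial}, $f(y)\equiv f(x) \pmod{p^2}$. Combining with $f(x)\equiv x\equiv y \pmod{p^2}$ gives $f(y)\equiv y \pmod{p^2}$.
\item[] For condition (2): the derivative $Df:\ZZ_p^d\to \M_d(\ZZ_p)$ is a polynomial map with $\ZZ_p$-coefficients. The second part of Lemma~\ref{trivial} therefore gives $Df(y)\equiv Df(x)\equiv S+pM \pmod{p^2}$. The same pair $S,M$ witnesses condition (2) for $y$.
\end{itemize}

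\emph{Part (2).} By condition (1) for $x$, we have $f(x)\equiv x\pmod{p^2}$. Since $f$ is $1$-Lipschitz, induction gives
\[
f^k(x)=f\bigl(f^{k-1}(x)\bigr)\equiv f(x)\equiv x \pmod{p^2}
\]
for every $k\ge1$. This is exactly the content of the first line of the proof of Corollary~\ref{dlip} with $\ell=2$. Part (2) then follows by applying part (1) to $y=f^k(x)$.

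There is no genuine obstacle in this argument. The only point requiring care is that condition (2) of Definition~\ref{strong} is a congruence modulo $p^2$, not merely modulo $p$. It is therefore essential to use the modulo-$p^2$ Lipschitz property of $Df$ from Lemma~\ref{trivial}, rather than the weaker mod-$p$ projection property of Remark~\ref{rem:con}.
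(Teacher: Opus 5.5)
Your proposal is correct and follows essentially the same route as the paper: part (1) via the mod-$p^2$ congruence-preserving property of $f$ and of $Df$ (so the same pair $S,M$ witnesses condition (2) for $y$), and part (2) by showing inductively that $f^k(x)\equiv x \pmod{p^2}$ and then applying part (1). Your remark that condition (2) is a congruence modulo $p^2$, and so needs the mod-$p^2$ Lipschitz property of $Df$, is exactly the point the paper's argument relies on as well.
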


\begin{proof}
We first prove (1) in Definition \ref{strong}. Let $y \equiv x \pmod{p^2}$. Since $f$ is a polynomial map with coefficients in $\ZZ_p$, it preserves congruences modulo $p^2$. Hence $f(y) \equiv f(x) \pmod{p^2}$. Since $x$ is a strong fixed point, we have $f(x) \equiv x \pmod{p^2}$. Combining this with $x \equiv y \pmod{p^2}$, we obtain $f(y) \equiv y \pmod{p^2}$.

It remains to check condition (2) in Definition \ref{strong}. The entries of $Df$ are again polynomial functions with coefficients in $\ZZ_p$. Therefore, $y \equiv x \pmod{p^2}$ implies $Df(y) \equiv Df(x) \pmod{p^2}$. By assumption, there exist $S,M \in \M_d(\ZZ_p)$ such that $S^2=S$, $SM=MS$, and $Df(x) \equiv S+pM \pmod{p^2}$. Thus $Df(y) \equiv S+pM \pmod{p^2}$ as well. Hence $y$ is a strong fixed point of $f$.

We now move to the proof of part (2). Since $f(x) \equiv x \pmod{p^2}$ and $f$ is 1-Lipschitz, a simple induction shows that  $f^k(x) \equiv x \pmod{p^2}$ for all $k \ge 1$. Applying the first part with $y=f^k(x)$ shows that $f^k(x)$ is a strong fixed point of $f$
\end{proof}

Our first goal is to obtain a precise description of the growth of $\Per_{f_n}(x)$ when $x$ is a strong fixed point. This result will later be used to derive a weaker statement in the general case. Throughout this subsection, we use the notation introduced in Lemma~\ref{lem:decom}.

\begin{lemma}\label{lem:goup}
Suppose $h: \ZZ_p^d \to \ZZ_p^d$ is a polynomial map and let $x$ be a strong fixed point for $h$ satisfying $h(x) \equiv x \pmod{ p^n }$ for some $n \ge 2$. Write
$b_n(h,x)= \frac{h(x)-x}{p^n}$.  
Then the following dichotomy holds: 
\begin{enumerate}

\item If  
\begin{equation*}\label{CC}
\er(Dh(x))>0 \quad \textrm{and} \quad b_n(h, x) \not\in  \uker (Dh(x))
\end{equation*}
then $\Per_{h_{n+1}}(x)=p$ 
\item Otherwise, we have  $\Per_{h_{n+1}}(x)=1$. 
\end{enumerate}

\end{lemma}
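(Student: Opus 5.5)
The plan is to reduce the computation of $\Per_{h_{n+1}}(x)$ to the dynamics of an affine map on $(\ZZ/p\ZZ)^d$ via Corollary~\ref{linearization}, and then compute the orbit of $0$ under that affine map explicitly, using that its linear part is an idempotent modulo $p$.

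\textbf{Step 1: localization.} Since $h(x)\equiv x \pmod{p^n}$ and $h$ is $1$-Lipschitz, induction gives $h^k(x)\equiv x \pmod{p^n}$ for all $k\ge 0$. Hence the whole $h_{n+1}$-orbit of $x$ lies in the set $X_n$ of residue classes modulo $p^{n+1}$ inside the ball $x+p^n\ZZ_p^d$. By Corollary~\ref{linearization}, on $X_n$ the map $h_{n+1}$ is conjugate via $\tau$ to
\[
\Phi(t)=\overline{A}\,t+\overline{b}, \qquad A=Dh(x),\quad b=b_n(h,x),
\]
where bars denote reduction modulo $p$. Since $\tau(x)=0$, we get $\Per_{h_{n+1}}(x)=\Per_\Phi(0)$.

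\textbf{Step 2: identify $\overline{A}$ and the invariants.} Because $x$ is a strong fixed point, $A\equiv S+pM \pmod{p^2}$ with $S^2=S$, so $\overline{A}=\overline{S}$ is an idempotent. We need $\uker(A)=\ker\overline{S}$ and $\er(A)=\operatorname{rank}\overline{S}$; this is the step needing the most care.
\begin{itemize}
\item Lemma~\ref{lem:decom} gives some $m$ with $A^m=S'+pM'$ and $S'^2=S'$.
\item Then $\overline{S'}=\overline{A}^{\,m}=\overline{S}^{\,m}=\overline{S}$.
\end{itemize}
So $\uker(A)=\ker\overline S$ and $\er(A)=\operatorname{rank}\overline S$ by definition. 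The fact that these are well defined is exactly the independence argument at the end of the proof of Lemma~\ref{lem:decom}. Membership $b\in\uker(A)$ is understood via $\overline b$.

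\textbf{Step 3: compute the orbit.} Using $\overline S^{\,j}=\overline S$ for $j\ge1$, for $k\ge1$ we have
\[
\Phi^k(0)=\sum_{j=0}^{k-1}\overline S^{\,j}\,\overline b=\overline b+(k-1)\overline S\,\overline b .
\]
Write $\overline b=\overline b^{\,1}+\overline b^{\,0}$ with $\overline b^{\,1}=\overline S\,\overline b$ and $\overline b^{\,0}=(I-\overline S)\overline b$, as in Lemma~\ref{lem:submodules} over $\mathbb F_p$. Then
\[
\Phi^k(0)=k\,\overline b^{\,1}+\overline b^{\,0}, \qquad k\ge1 .
\]
\begin{itemize}
\item If $\overline S\,\overline b\neq0$, the points $k\overline b^{\,1}+\overline b^{\,0}$ depend only on $k \bmod p$ and are pairwise distinct for $k=1,\dots,p$. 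So the orbit is eventually a cycle of length exactly $p$, and in fact $0$ itself lies on it, since $\Phi^p(0)=0$.
\item If $\overline S\,\overline b=0$, then $\Phi^k(0)=\overline b$ for all $k\ge1$, so the eventual period is $1$.
\end{itemize}

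\textbf{Step 4: match with the dichotomy.} The condition $\overline S\,\overline b\ne0$ means $\overline b\notin\ker\overline S=\uker(A)$. This forces $\overline S\ne0$, i.e.\ $\er(A)>0$. Conversely, if $\er(A)=0$ then $\overline S=0$, so every $\overline b$ lies in $\uker(A)$. Thus case (1) of the statement is exactly $\overline S\,\overline b\neq 0$, giving $\Per=p$, and otherwise $\Per=1$.

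I expect the main subtlety to be Step 2, namely making sure $\uker$ and $\er$ of $Dh(x)$ agree with the kernel and rank of $\overline S$ coming from the strong fixed point condition, rather than from an arbitrary decomposition of some power. Everything else is a direct computation.
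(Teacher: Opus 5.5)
Your proof is correct and is essentially the paper's argument: via Corollary~\ref{linearization} you conjugate $h_{n+1}$ on the fibre over $x \bmod p^n$ to $\Phi(t)=\overline S\,t+\overline b$, split $\overline b$ along $\operatorname{Im}\overline S\oplus\ker\overline S$, and read off $\Phi^k(0)=k\,\overline b^{\,1}+\overline b^{\,0}$; your Step~2 (showing $\overline{S'}=\overline A^{\,m}=\overline S$) makes explicit what the paper leaves implicit when it invokes Lemma~\ref{lem:decom}, which is needed because $Dh(x)$ is only congruent to $S+pM$ modulo $p^2$. One harmless slip is that $\Phi^p(0)=\overline b^{\,0}$, not $0$, so $0$ lies on the cycle only when $(I-\overline S)\,\overline b=0$ (for instance $\overline S=\mathrm{diag}(1,0)$, $\overline b=(1,1)^T$ gives $\Phi^k(0)=(k,1)$ and a tail of length one), but this does not affect your conclusion, since $\Per$ denotes the eventual period.
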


\begin{proof} Let $X_n$ and the map $\tau$ be as in Corollary \ref{linearization}. Recall that $\tau (x)=0$. Hence, using Lemma~\ref{linearization}, it suffices to study the period of the point $0$ for the map $\Phi: (\ZZ/p\ZZ)^d \to (\ZZ/p\ZZ)^d$. 
Since $x$ is a strong fixed point of $h$, there exist matrices $S, M \in \M_d(\ZZ_p)$ where $S^2=S, SM=MS$ and $Dh(x) \equiv S+pM \pmod{ p^2 }$.
Let $\ZZ_p^d= V^0 \oplus V^1$ be the decomposition of $\ZZ_p^d$ into $\ZZ_p$-submodules provided by Lemma~\ref{lem:akb} and Lemma~\ref{lem:submodules}.
We write $b_n$ for $b_n(h,x)$.  Decompose $b_n=b_n^1+ b_n^0$ with $b^i_n \in V^i$, for $i=0,1$. For $t= t^1+ t^0$ with $t^i \in V^i$ we have
\begin{equation}\label{canonical}
 \Phi(t)= St + b_n = (t^1+b_n^1)+b_n^0.
\end{equation}
By Lemma~\ref{lem:decom} it holds that $\er(Dh(x))= \dim V^1$ and $b^1 =0$ if and only if $b \in  \uker (Dh(x))$. We now distinguish two cases:

In case (1), we know that $b^1_n \neq 0$ and $\dim V_1 >0$. It is easy to see that 
$\Phi^k(t)=t^1+kb_n^1+ b^0_n$. In particular, we have $\Phi^{p+1}(t)= \Phi(t)$. This proves the first part of the claim. 

Otherwise, either $\dim V^1=0$, in which case $\Phi(t)=b_n^0$, or $\dim V^1>0$ and $b_n^1=0$, in which case $\Phi(t)=t^1+b_n^0$. In either case, a direct computation shows that $\Phi^2=\Phi$, proving the second part of the claim.

\end{proof}

When calculating the period length of strong fixed points, we need to distinguish between odd primes and $p=2$ here for a technical reason which will become.

\begin{theorem} \label{lem:period_fixed_points}
Let $p$ be odd and suppose $f: \ZZ_p^d \to \ZZ_p^d$ is a polynomial map and $x \in \ZZ_p^d$ is a strong fixed point for $f$. Then either there exists $C$ such that $\Per_{f_n}(x) \le C$ for all $n \ge 1$, or there exists
$n_0$ such that for all $n \ge 1$ we have 
\[\Per_{f_{n}}(x)= p^{\max(n-n_0, 0)}.\]
\end{theorem}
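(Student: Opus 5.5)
We argue by induction on the level $n$, tracking the periodic cycle of $x$ through the tower $f_n$ and applying Lemma~\ref{lem:goup} to a suitable iterate $h=f^{T_n}$, where $T_n:=\Per_{f_n}(x)$. Since $x$ is a strong fixed point, $f(x)\equiv x \pmod{p^2}$, so $T_1=T_2=1$. Periods are monotone, and $T_n \mid T_{n+1}$ because reduction mod $p^n$ intertwines $f_{n+1}$ and $f_n$.

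\textbf{Step 1: $h$ inherits the strong fixed point structure.} We may first replace $x$ by a point on its periodic cycle. This is harmless: all points of the orbit are strong fixed points by Lemma~\ref{simplestrongfixedpoin}, and they share the same periods at every level. With $T_n=p^{k}$, set $h=f^{T_n}$. Then $h(x)\equiv x\pmod{p^n}$. By the chain rule and Corollary~\ref{dlip} (applied with $\ell=2$), $Dh(x)\equiv Df(x)^{T_n}\equiv (S+pM)^{p^k}\pmod{p^2}$.

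\textbf{Step 2: the derivative invariants of $h$ are independent of $k$.} Since $SM=MS$, the binomial theorem gives
\[(S+pM)^{p^k}\equiv S+p^{k+1}(\cdots)+\cdots\equiv S \pmod{p^2}\]
for $k\ge1$. The key point is that $p\binom{p^k}{1}SM$ is divisible by $p^{k+1}$, and the higher terms carry $p^j\binom{p^k}{j}$. For $k=0$ the matrix is just $S+pM$. In all cases $Dh(x)\equiv S'+pM'$ with $S'=S$, so $x$ is a strong fixed point of $h$ with the same $\er$ and $\uker$ as $Df(x)$, namely $\er=\mathrm{rank}\,\overline S$ and $\uker=\ker\overline S$.

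\textbf{Step 3: apply the dichotomy.} Lemma~\ref{lem:goup} then gives $\Per_{h_{n+1}}(x)\in\{1,p\}$. Lifting the $T_n$-cycle shows $T_{n+1}=T_n\cdot\Per_{h_{n+1}}(x)$. Indeed, $T_n\mid T_{n+1}$, and the $f_{n+1}$-period equals $T_n$ times the return period under $f^{T_n}$; this uses Lemma~\ref{sizeofperiod}. Hence each step multiplies the period by $1$ or $p$. Whether it multiplies by $p$ depends on whether
\[\er(Df(x))>0 \quad\text{and}\quad b_n:=\frac{f^{T_n}(x)-x}{p^n}\notin \uker(Df(x)),\]
and this condition can be read mod $p$ as $\overline S\,\overline{b_n}\neq0$.

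\textbf{Step 4: once the period grows, it keeps growing (the main obstacle).} It remains to show that if $T_{n+1}=pT_n$ for some $n$, then $T_{m+1}=pT_m$ for all $m\ge n$. This yields $T_n=p^{\max(n-n_0,0)}$, with $n_0$ the last level before growth; otherwise the period stays bounded. Assume $\overline S b_n\neq 0$. The new displacement is
\[b_{n+1}=\frac{f^{pT_n}(x)-x}{p^{n+1}}=\frac{h^p(x)-x}{p^{n+1}}.\]
Iterating the linearization of Corollary~\ref{linearization} one level further, now modulo $p^{n+2}$, with $h(x)=x+p^n b_n$, we expect
\[h^p(x)-x\equiv p^n\bigl(I+A+\cdots+A^{p-1}\bigr)b_n \pmod{p^{n+2}},\]
where $A\equiv S+pM'$. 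The error terms must be controlled; they arise from second-order Taylor terms of order $p^{2n}$, and these vanish mod $p^{n+2}$ when $n\ge2$. On $V^1$, writing $A=I+pN$, we get $\sum_{j<p}A^j\equiv pI+p\binom p2 N\equiv pI \pmod{p^2}$ for odd $p$; this is exactly where $p$ odd is needed, since $p\mid\binom p2$. On $V^0$, $A\equiv pM'$, so the sum is $\equiv I+pM' \pmod{p^2}$. Projecting with $S$ therefore gives $S b_{n+1}\equiv S b_n\not\equiv0\pmod p$. The $V^0$-component also contributes an order-$p^n$ term, and the real difficulty is bookkeeping here: I would handle it by first conjugating, that is, moving $x$ within its class mod $p^n$ to kill $b_n^0$ using the contraction on $V^0$. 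Lemma~\ref{simplestrongfixedpoin} guarantees that this move preserves the strong fixed point property. Induction then concludes.
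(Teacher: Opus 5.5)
Your proposal is correct and is essentially the paper's argument. Lemma~\ref{lem:goup} applied to $h=f^{T_n}$ shows that each level multiplies the period by $1$ or $p$, and the computation $S\sum_{i<p}Dh(x)^i\equiv pS \pmod{p^2}$ (where oddness of $p$ enters through $p\mid\binom{p}{2}$) gives $Sb_{n+1}\equiv Sb_n\not\equiv 0 \pmod p$, so once the period grows it keeps growing. The only organizational difference is that you run the induction level by level, whereas the paper treats the first growth level and then restarts with $f^{p^{K-1}}$.

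The one step to state precisely is your ``conjugation'': the new base point should be $f^{T_n}(x)$. This point lies in the class of $x$ mod $p^n$, is periodic mod $p^{n+1}$, and satisfies $b_n\bigl(h,f^{T_n}(x)\bigr)\equiv \overline{S}\,b_n \pmod p$, so the $V^0$-part is killed while $\overline{S}b_n$ is unchanged. Being on the forward orbit of $x$, it also has the same periods as $x$ at every level, which an arbitrary point of the class of $x$ mod $p^n$ need not have.
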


\begin{proof}
Note that as the statement is about $\Per_{f_n}(x)$, by passing to a point in the orbit of $x$, we can always assume that $x$ is itself periodic mod $p^m$ for a given $m$. 
Since $x$ is a strong fixed point for $f$ we know that $f(x) \equiv x \pmod{ p^2 }$
and that $Df(x)=S+pM$ with $S^2=S$ and $SM=MS$. In the course of the proof we will use the fact that $Df$ along the $f$-orbit of $x$ is congruent to $S$ mod $p$. Note that $\Per_{f_2}(x)=1$. Let $m > 2$ be the least integer with the property that $ \Per_{f_m}(x)>1$. Hence $\Per_{f_{m-1}}(x)=1$. Since $x$ is a strong fixed point for $f$, it follows from Lemma~\ref{simplestrongfixedpoin} that $\Per_{f_{m}}(x)=p$. We will show that 

\medskip

\noindent
{\bf Claim.} For every $k \ge 1$, we have $\Per_{f_{m+k}}(x)=p^{k+1}$.

\medskip
We will prove the claim by induction on $k$. We will first prove it for $k=1$. Choose $y:=f^{p}(x)$  in the orbit of $x$ such that 
\[f(y) \equiv y \pmod{ p^{m-1} } \quad  \textrm{and} \quad  f^p(y) \equiv y \pmod{ p^m }.\]  

Note that $y$ is also a strong fixed point for $f$, and  $\Per_{f_{m}}(y)=p$. We also know by Lemma~\ref{lem:goup} that the following hold:
\[\er(Df(y))>0 \quad \textrm{and} \quad b_{m-1}(f,y) \not\in  \uker (Df(y)).
\]

Set $h=f^p$. Note that $h(y)  \equiv y \pmod{ p^m }$, that is, $\Per_{h_m}(y)=1$,  and our goal is to show that
$\Per_{h_{m+1}}(y)=p$.

In order to be able to apply Lemma~\ref{lem:goup}, we need to compute $Dh(y)$ and $b_m(h,y)$.  Since $S$ is an idempotent mod $p$, we have 
\[
    Dh(y)= D(f^p)(y) = \prod_{i=0}^{p-1} Df(f^i(y)) \equiv S^p \equiv S \pmod{ p }
\]

Set $v_j = f^j(y) - y$ for $j \geq 0$, so $v_0 =0$.  First note that since $ f(y) \equiv y \pmod{ p^{m-1} }$ and $f$ is 
$1$-Lipschitz, we deduce that $f^2(y) \equiv f(y) \pmod{ p^{m-1} }$. In particular, $f^2(y) \equiv  y \pmod{ p^ {m-1}}$. 
A simple induction shows that all $v_j$ are divisible by $p^{m-1}$. 
We claim 
\begin{equation}\label{eq:rec}
    v_j \equiv p^{m-1} \sum_{i=0}^{j-1} Dh(y)^i\, b_{m-1}(f,y) \pmod{p^{2m-2}}
    \quad \textrm{for all } 0 \leq j \leq p.
\end{equation}
We prove this equality by induction. For $j=1$, we have
\[v_1= f(y)-y = b_{m-1}(f, y) p^{m-1},\]
For the induction step, since $v_j $ is divisible by $p^{m-1}$, the Taylor expansion (Lemma~\ref{taylor}) of $f$ at $y$ gives 
\[
    v_{j+1} = f(y + v_j) - y
    \equiv (f(y) - y) + Df(y)\,v_j \pmod{p^{2m-2}},
\]
Using $f(y) - y = p^{m-1} b_{m-1}(f,y)$ and the induction hypothesis yields 
\[
    v_{j+1} \equiv p^{m-1} b_{m-1}(f,y) + Df(y)\cdot p^{m-1} \sum_{i=0}^{j-1}Df(y)^i b_{m-1}(f,y)
    = p^{m-1} \sum_{i=0}^{j}Df(y)^i \, b_{m-1}(f,y) \pmod{p^{2m-2}}.
\]
This completes the induction. Applying \eqref{eq:rec} with $j = p$ we obtain
 
\begin{equation}\label{trans}
p^m b_m(f^p, y)= f^p(y)- y \equiv v_p \equiv p^{m-1}  \! \left(\sum_{i=0}^{p-1}Df(y)^i\right) b_{m-1}(f,y)
    \pmod{p^{2m-2}}.
\end{equation}

Using $Df(y)= S+pM$, we have 

\begin{equation}
\begin{split}      
 \sum_{i=0}^{p-1}Df(y)^i \equiv    \sum_{i=0}^{p-1}(S+ pM)^i  & \equiv   \I+ (S+pM) +     \sum_{i=2}^{p-1} (S+pM)^i \\ 
    & \equiv \I+ S+ pM+ (p-2)S + p  \left( \frac{p(p-1)}{2}-1 \right) SM   \\
    & \equiv  p ( S+M-SM)+ \I - S  \pmod{p^2}. 
\end{split}
\end{equation}

We now claim that $  \sum_{i=0}^{j}Df(y)^i \not\equiv 0 \pmod{p^{2} }$ if $p \neq 2$ (the claim is not true for $p=2$ in general). Assuming the contrary, by first reducing modulo 
$p$ we deduce that $ \I- S \equiv 0 \pmod{ p }$. Write $\Id-S=pX$ for some
matrix $X \in \M_d(\ZZ_p)$, so that $S= \I- pX$. We claim that $X=0$. In fact, the equality $S^2=S$ implies that $pX=p^2X^2$ or $ X= pX^2$. Assuming that $X \neq 0$ we can write  $X=p^\ell Z$ with $\ell \ge 1$ and $Z \in \M_d(\ZZ_p)$ not divisible by $p$. Then we deduce $p^\ell  Z= p^{ 1+2 \ell} Z^2$
or $Z= p^{\ell+1} Z^2$, so that $Z$ is divisible by $p$, a contradiction. Hence $X=0$ and $S= \I$. But then the expression above simplifies to $p \I$, which is non-zero modulo $p^2$. The conclusion is that 
\[  
\sum_{i=0}^{j}Df(y)^i \not\equiv 0 \pmod{p^{2} }.
\]
Now, we need to show that $S b_m(f^p,y) 
\not \equiv 0 \pmod{ p }$. Assume the contrary. Using \eqref{trans}, $m>2$ and the facts that $Df(y) \equiv Df(x) \pmod{ p }$, and that $S$ commutes with $Df(x)$ mod $p$, we deduce 
\[ 0 \equiv p^m S b_m(f^p, y) \equiv
p^{m-1} \left(\sum_{i=0}^{p-1}Df(y)^i\right) Sb_{m-1}(f,y) \pmod{ p^{m+1} }
\]
Since the sum $(\sum_{i=0}^{p-1}Df(y)^i$ is not divisible by $p^2$, it follows that 
$Sb_{m-1}(f, y) \equiv 0 \pmod{ p }$, which contradicts our assumption. 

Summarizing the proof so far, we have shown that if $x$ is a strong fixed point for $f$, and
$m > 2$ is the least integer with the property that $ \Per_{f_m}(x)>1$, then $ \Per_{f_m}(x)=p$
and $\Per_{f_{m+1}(x)}=p^2$. Now suppose that the claim is proven for all values $k < K$ and we want to show that $ \Per_{f_{m+K}}(x)= p^{m+K}$. Consider the function $h=f^{p^{K-1} }$. Then 
$x$ is a strong fixed point for $h$ and for all $j <m$ we have  $\Per_{h_j}(x)= 
\Per_{f^{p^{K-1}}_j}(x)= 1$, i.e., we are in the situation of the beginning of the proof and can proceed inductively.

\end{proof}

For $p=2$ we will prove a slightly weaker result with an additional requirement which will nonetheless yield the desired bounds on the period length in the end.

\begin{lemma}\label{lem:p2-strong} Let $f:\mathbb \ZZ_2^d\to\mathbb \ZZ_2^d$ be a polynomial map and $z\in\mathbb \ZZ_2^d$. Suppose $f(z)\equiv z\pmod 4$ and $Df(z)\equiv S\pmod 4$ with $S^2 = S$. Then either the sequence $\Per_{f_n}(z)$ is bounded, or there are $n_0, n_1 \in \mathbb{N}$ with
\[
\Per_{f_n}(z)=2^{n-n_0}
\]
for all $n \geq n_1$.
\end{lemma}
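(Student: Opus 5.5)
We follow the scheme of the proof of Theorem~\ref{lem:period_fixed_points} and isolate the one place where $p=2$ differs. By the stronger hypothesis $Df(z)\equiv S \pmod 4$, the point $z$ is a strong fixed point with $M\equiv 0 \pmod 2$. By Lemma~\ref{simplestrongfixedpoin}, every point of its orbit is again a strong fixed point, and $Df$ is $\equiv S \pmod 4$ along the whole orbit (Corollary~\ref{dlip} with $\ell=2$). If $\Per_{f_n}(z)=1$ for all $n$, the sequence is bounded and we are done. Otherwise, let $m>2$ be minimal with $\Per_{f_m}(z)>1$. Lemma~\ref{lem:goup} then gives $\Per_{f_m}(z)=2$, together with $\er(Df(y))>0$ and $b_{m-1}(f,y)\notin\uker(Df(y))$ for a suitable orbit point $y$.

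Next we carry out the transfer step with $h=f^2$. The recursion \eqref{eq:rec} is characteristic-free, so \eqref{trans} gives
\[
2^m b_m(f^2,y)\equiv 2^{m-1}\bigl(\I+Df(y)\bigr)\,b_{m-1}(f,y)\pmod{2^{2m-2}}.
\]
Since $Df(y)\equiv S \pmod 4$, we have $\I+Df(y)\equiv \I+S \pmod 4$. Multiplying by $S$ gives $S(\I+S)=2S$, so
\[
2^m S\,b_m(f^2,y)\equiv 2^m S\,b_{m-1}(f,y)\pmod{2^{m+1}}
\]
once $2m-2\ge m+1$, which holds for $m\ge 3$. Hence $S b_m(f^2,y)\not\equiv 0 \pmod 2$ follows from $Sb_{m-1}(f,y)\not\equiv 0$. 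This is exactly where $p=2$ breaks without the extra hypothesis: with a general $M$, the factor $p(S+M-SM)+\I-S$ can vanish modulo $4$. The assumption $M\equiv 0 \pmod 2$ kills that term.

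We then check that the hypothesis propagates to $h=f^2$, so the induction can run. We need $Dh(y)\equiv S' \pmod 4$ for an idempotent $S'$. Here $Dh(y)\equiv Df(f(y))Df(y)\equiv S^2=S \pmod 4$, and $h(y)\equiv y \pmod{2^m}$ with $m\ge 3$, so in particular modulo $4$. Thus $h$ and $y$ satisfy the same hypotheses. Lemma~\ref{lem:goup} applied to $h$ at level $m$ gives $\Per_{h_{m+1}}(y)=2$. The $\uker$ condition transfers because $\uker$ is computed from $S$, which is unchanged. Lemma~\ref{sizeofperiod} then gives $\Per_{f_{m+1}}(y)=4$. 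Iterating with $h=f^{2^{K}}$, as in the last paragraph of the proof of Theorem~\ref{lem:period_fixed_points}, yields $\Per_{f_{m+K}}(z)=2^{K+1}$ for all $K\ge0$. This is the claimed form $2^{n-n_0}$ with $n_0=m-1$ and $n_1=m$.

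The main obstacle is the careful bookkeeping in this transfer step. We must verify that the induction hypothesis at each stage is really ``$Dh\equiv$ idempotent $\pmod 4$'' and not merely modulo $2$, since the chain-rule congruences need $Df$ constant modulo $4$ along the orbit, which requires $z$ fixed modulo $4$. We must also confirm that the error term modulo $2^{2m-2}$ in \eqref{trans} is harmless at the first stage; if $m$ were too small for $2m-2\ge m+1$, we would pass to a later level. The flexibility of $n_1$ in the statement absorbs any such finite initial segment where the pattern has not yet stabilized.
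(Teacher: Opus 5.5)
Your proof is correct, and it rests on the same identity as the paper's: $Dh\equiv S \pmod 4$ gives $S(\I+Dh)\equiv S(\I+S)=2S \pmod 4$, so the $S$-component of the two-step displacement gains exactly one factor of $2$, and $Sb\not\equiv 0 \pmod 2$ propagates from level to level. The difference is in packaging: the paper runs a direct induction on $h^{2^k}$ at a single base point $z$ starting from any doubling level, while you reuse the odd-$p$ scaffolding (minimal level, Lemma~\ref{lem:goup}, \eqref{trans} with $j=2$, Lemma~\ref{sizeofperiod}, iteration via $f^{2^K}$), and choosing a fresh point on the cycle at each stage has the mild advantage of avoiding the paper's tacit assumption that $z$ stays on the eventual cycle at all higher levels.
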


\begin{proof} Put \(q_n := \Per_{F_n}(z)\). Since \(f(z) \equiv z \pmod{4}\), we have
\(q_1=q_2=1\). Moreover, it holds that
$q_{n+1}\in\{q_n,2q_n\}$, for all \(n\in\mathbb{N}\), and hence every \(q_n\) is some power of $2$. Now we assume that the sequence $(q_n)_{n\in\mathbb{N}}$ is not bounded.
Then we choose \(m\geq 2\) such that
$q_{m+1}=2q_m$, and we write \(q_m=2^r\). We may without loss of generality assume that
$z$ lies on the eventual cycle of $f$ modulo $2^{m+1}$. Then we set
$h:=f^{q_m}=f^{2^r}.$ By definition, we have
$ h(z)\equiv z\pmod{2^m},\qquad h(z)\not\equiv z\pmod{2^{m+1}},\qquad h^2(z)\equiv z\pmod{2^{m+1}}. $
Thus, we may write
\[
h(z)-z=2^m b,\qquad b\not\equiv 0\pmod{2}.
\]
Since $f(z)\equiv z\pmod{4}$, every point in the $f$-orbit of $z$
is congruent to $z$ modulo $4$. Using Lemma \ref{trivial}, we have
\[
Dh(z)
 =\prod_{i=0}^{q_m-1}Df\bigl(f^i(z)\bigr)
 \equiv S^{q_m}
 \equiv S
 \pmod{4},
\]
because $S^2=S$. Now we apply Taylor expansion (Lemma~\ref{taylor}) to obtain
\begin{align*}
h^2(z)-z
 &=h(z+2^m b)-z\\
 &\equiv h(z)-z+Dh(z)\,2^m b
   \pmod{2^{m+2}}\\
 &=2^m\bigl(\operatorname{Id}+Dh(z)\bigr)b
   \pmod{2^{m+2}}.
\end{align*}
Since \(h^2(z)\equiv z\pmod{2^{m+1}}\), it follows that
$\bigl(\operatorname{Id}+Dh(z)\bigr)b\equiv 0\pmod{2}.$
Using $Dh(z)\equiv S\pmod{2}$, we get
$(\operatorname{Id}+S)b\equiv 0\pmod{2}.$ Since the residue field has characteristic \(2\), this is equivalent to $Sb\equiv b\pmod{2}.$ In particular,
$Sb\not\equiv 0\pmod{2}.$ We claim that from this level onward the period doubles at every level,
and prove this by induction. Suppose that, for some \(k\geq 0\), with $h_k:=h^{2^k},$ we have
\[
h_k(z)-z=2^{m+k}b_k,\qquad
Sb_k\not\equiv 0\pmod{2},
\]
and
\[
Dh_k(z)\equiv S\pmod{4}.
\]
For $k=0$, this is true by what we have just shown. Since
$h_k(z)\equiv z\pmod{2^{m+k}}$ and $m+k\geq 2$, we have
$ h_k(z)\equiv z\pmod{4}.$ Therefore,
$Dh_k\bigl(h_k(z)\bigr)\equiv Dh_k(z)\pmod{4}.$ Hence,
\[
Dh_{k+1}(z)
 =D(h_k^2)(z)
 =Dh_k\bigl(h_k(z)\bigr)Dh_k(z)
 \equiv S^2
 =S
 \pmod{4}.
\]
Another application of Lemma~\ref{taylor} yields
\[
h_{k+1}(z)-z
 =h_k^2(z)-z
 \equiv
 2^{m+k}\bigl(\operatorname{Id}+Dh_k(z)\bigr)b_k
 \pmod{2^{m+k+2}}.
\]
Using \(Dh_k(z)\equiv S\pmod{4}\), we obtain 
\begin{align*}
S\bigl(h_{k+1}(z)-z\bigr)
 &\equiv
 2^{m+k}S(\operatorname{Id}+S)b_k
 \pmod{2^{m+k+2}}\\
 &=2^{m+k}(S+S^2)b_k
 \pmod{2^{m+k+2}}\\
 &=2^{m+k+1}Sb_k
 \pmod{2^{m+k+2}}.
\end{align*}
Since $Sb_k\not\equiv 0\pmod{2}$, this implies that
$h_{k+1}(z)-z$ is not divisible by $2^{m+k+2}$, but, for
$w:=h_{k+1}(z)-z, $
the vector \(Sw\) has exact valuation \(m+k+1\). Thus, the lift from
level \(m+k+1\) to level \(m+k+2\) is non-trivial. Consequently, the
period doubles at the next level. This proves inductively that
$q_{m+k}=q_m2^k$
for all \(k\geq 0\). Hence, the claim follows.
\end{proof}

We can now prove the following theorem, which is part (1) of Theorem~\ref{them:finitary}.

\begin{theorem}\label{thm:period}
Let $f: \mathbb{Z}_p^d \to \mathbb{Z}_p^d$ be a polynomial map, and let
$x \in \mathbb{Z}_p^d$ be arbitrary. Then either there exists a constant $C$ such that $ \Per_{f_n}(x) \le C$ for all $n \ge 1$, or there exist  $C_1,C_2>0$ such that for all $n \ge 1$ we have 
\[ C_1 p^n \le \Per_{f_n}(x) \le C_2 p^n. \]

\end{theorem}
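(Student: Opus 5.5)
The plan is to reduce to the strong fixed point case, Theorem~\ref{lem:period_fixed_points} (and Lemma~\ref{lem:p2-strong} for $p=2$), by passing to a suitable iterate $g=f^L$ and a suitable point $y$ in the orbit of $x$. Periods of $f$ and of $f^L$ are compared by Lemma~\ref{sizeofperiod}.

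\textbf{Upper bound.} Here $\Per_{f_{n+1}}(x)\le p^d\,\Per_{f_n}(x)$ is too weak. Instead we use the following structure.

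First, choose $y=f^{D}(x)$ with $D=\Tail_{f_2}(x)$, so that $y$ is periodic mod $p^2$ with period $T_2=\Per_{f_2}(x)$. Set $g_0=f^{T_2}$; then $g_0(y)\equiv y\pmod{p^2}$.

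Next, apply Lemma~\ref{lem:decom} to $A=Dg_0(y)$. This gives $r\ge1$ with $A^r=S+pM$, $S^2=S$, $SM=MS$. Let $g=g_0^{r}$. Every point of the $g_0$-orbit of $y$ is congruent to $y$ mod $p^2$, so by Lemma~\ref{trivial} and the chain rule, $Dg(y)\equiv A^r\pmod{p^2}$. Also $g(y)\equiv y\pmod{p^2}$. Hence $y$ is a strong fixed point of $g$ in the sense of Definition~\ref{strong}.

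For $p=2$ we need the stronger condition $Dg(z)\equiv S\pmod 4$ required by Lemma~\ref{lem:p2-strong}. To obtain it, I would iterate once more: raise $A^r=S+2M$ to a power $2^s$. Since $SM=MS$, we get $(S+2M)^{2}=S+4(SM+M^2)\equiv S\pmod 4$. So replacing $g$ by $g^2$ gives a derivative congruent to $S$ mod $4$.

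Set $L=T_2r$ (or $2T_2r$ when $p=2$), so $g=f^L$ with $L$ a constant independent of $n$.

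\textbf{Transferring periods.} For $n\ge2$, $y$ lies in the $f_n$-orbit of $x$, so $\Per_{f_n}(y)=\Per_{f_n}(x)$. By Lemma~\ref{sizeofperiod} applied to $f_n$,
\[
\frac{\Per_{f_n}(x)}{L}\le \Per_{g_n}(y)\le \Per_{f_n}(x),
\]
and hence
\[
\Per_{g_n}(y)\le\Per_{f_n}(x)\le L\,\Per_{g_n}(y).
\]
Theorem~\ref{lem:period_fixed_points} applied to $g$ and $y$ then gives the dichotomy. Either $\Per_{g_n}(y)$ is bounded, so $\Per_{f_n}(x)\le LC$. Or $\Per_{g_n}(y)=p^{\max(n-n_0,0)}$, which yields $p^{-n_0}p^n\le\Per_{f_n}(x)\le Lp^n$ for $n\ge n_0$. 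The finitely many small $n$ are absorbed into the constants, using $\Per_{f_n}(x)\ge1$ and $\Per_{f_n}(x)\le p^{dn}$. For $p=2$, Lemma~\ref{lem:p2-strong} gives $\Per_{g_n}(y)=2^{n-n_0}$ for $n\ge n_1$, and the same absorption works.

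\textbf{Main obstacle.} The delicate step is checking that the iterate $g$ really satisfies the hypotheses, namely the congruence $Dg(y)\equiv S+pM\pmod{p^2}$ with a genuine idempotent $S$ commuting with $M$. The chain rule only gives $Dg(y)$ as a product of $Dg_0$ at different orbit points. These agree with $A$ only mod $p^2$, which is exactly why I arrange $g_0(y)\equiv y\pmod{p^2}$ before taking powers: the whole $g_0$-orbit then stays in one class mod $p^2$. Reducing to that situation first is the main point of the proof.
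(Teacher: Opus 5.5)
Your proposal is correct and follows the paper's proof essentially step for step. You pass to $y=f^{\Tail_{f_2}(x)}(x)$, take an iterate fixing $y$ modulo $p^2$, and use Lemma~\ref{lem:decom} together with the constancy of the derivative modulo $p^2$ along that orbit to make $y$ a strong fixed point of some $g=f^L$; you square once more when $p=2$ to obtain $Dg(y)\equiv S \pmod 4$ for Lemma~\ref{lem:p2-strong}. You then compare $\Per_{f_n}(x)$ with $\Per_{g_n}(y)$ via Lemma~\ref{sizeofperiod}. Your comparison factor $L$, the total number of iterates, is the right one. The paper's write-up puts $n_1=\Tail_{f_2}(x)$ in that place, a slip that your version avoids.
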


\begin{proof}
We first claim that there exists $m \ge 1$ and an element $y$ in the $f$-orbit of $x$, such that $y$ is a strong fixed point for $g:=f^{m}$. 
Set $n_1=\Tail_{f_2}(x)$  so that $f_2^{n_1}(x)$ is a periodic point for $f_2$. Hence, there exists $m_0 \ge 1$ such that for 
$h:=f^{m_0}$ and $y:=f^{n_1}(x)$ we have $h(y) \equiv y \pmod{ p^2 }$.  Let $A:=Dh(y)$. It follows from Corollary~\ref{dlip}
that $Dh( h^j(y) ) \equiv A \pmod{ p^2 }$. 
By Lemma~\ref{lem:decom},  there exist $ m_1 \ge 1$ such that $A^{m_1}$ can be expressed as 
$A^{m_1}= S+pM$ with $S^2=S$ and $MS=SM$. Now,
\[Dh^{m_1}(y)= \prod_{j=0}^{m_1-1} Dh(h^{j}(y) \equiv A^{m_1}
\equiv S+pM, \pmod{ p^2 }.\]
Setting $m:=m_0 m_1$, it follows that $y$ is a strong fixed point for $g:=h^{m_1}=f^{m}$.
We first consider the case $p>2$. Then applying Theorem~\ref{lem:period_fixed_points} to $g$ and the point $x$, we deduce that either  $\Per_{g_n}(x)=1$ for all $n \ge 1$ or there exists $n_0$ such that  $\Per_{g_n}(y)=p^{n-n_0}$ for all $n \ge n_0$. Applying Lemma~\ref{sizeofperiod} to $S=\ZZ/p^n\ZZ$ and the mapping $f_n$ we deduce 
\[ \Per_{f_n}(x) \le n_1 \Per_{g_n}(y) \le n_1.  \]
In the second case, we have 
\[ \Per_{f_n}(x) \le n_1 \Per_{g_n}(y) =n_1 p^{n-n_0}, \quad n \ge n_0.\]
and
\[  \Per_{f_n}(x) \ge \Per_{f^{n_1}_n}(y)= p^{n-n_0}, \quad n \ge n_0.\]
The claim follows immediately. If $p=2$, we consider $k:=h^2$ and we obtain $k(y) \equiv y \pmod 4$. Moreover, 
\[
Dk(y)=Dh(h(y))Dh(y) \equiv Dh(y)^2 \equiv (S+2M)^2 \pmod 4.
\]
Since $S^2 = S$ and $SM = MS$ we obtain
\[
(S+2M)^2 = S^2+2SM+2MS+4M^2 = S+4SM+4M^2 \equiv S \pmod 4.
\]
Thus $Dk(y) \equiv S \pmod 4$ and we are in the situation of Lemma~\ref{lem:p2-strong}. It remains only to compare the periods for $k=f^{2m}$ and for $f$. Put
\[
P_n:=\Per_{k_n}(y), \qquad Q_n:=\Per_{f_n}(y).
\]
Since $k=f^{2m}$, we have $P_n=\frac{Q_n}{\gcd(Q_n,2m)}$ by Lemma~\ref{lem:decom}. Therefore, $Q_n$ is bounded if and only if $P_n$ is bounded, and in the unbounded case $Q_n$ satisfies the claimed bound. Finally, since $y$ is a forward iterate of $x$, the eventual periods of $x$ and $y$ modulo $2^n$ are the same. Hence the same dichotomy holds for $\Per_{f_n}(x)$, completing the proof when $p=2$.
\end{proof}

\subsection{Growth of the pre-periodic part}
Recall that for a polynomial map $f: \mathbb{Z}_p^d \to \mathbb{Z}_p^d, x \in  \ZZ_p^d$ and $n \geq 1$, we write $\Tail_{f_n}(x)$ for the least integer $g(n)$ such that 
\[
f_n^k(x_0) = f_n^{m+k}(x_0)
\]
holds for $k \geq g(n)$ if $m=\Per_{f_n}(x)$. It is clear that $g(n+1) \geq g(n)$. The next theorem shows that the sequence $g(n)$ grows at most linearly. 

\medskip

\begin{theorem}\label{thm:preperiod-growth} Let $f:\ZZ_p^d\to\ZZ_p^d$ be a polynomial map with coefficients in $\ZZ_p$, and let $x\in\ZZ_p^d$. For $j\geq 0$, write $x_j:=f^j(x)$. For each $n\geq 1$, set $g(n):= \Tail_{f_n}(x)$. Write $r:=\Per_{f_1} (x)$. Then, for every $n\geq 1$ we have
\[ g(n+1)\leq g(n)+rd \leq g(n) + dp^d.\]
\end{theorem}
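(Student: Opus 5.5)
The plan is to pass from level $n$ to level $n+1$ by linearizing $f$ along the part of the orbit that is already periodic modulo $p^n$, and then to control the remaining tail with a rank-stabilization argument. The argument needs only the Taylor expansion of Lemma~\ref{taylor}, the fact that $Df$ modulo $p$ depends only on the residue class modulo $p$ (Lemma~\ref{trivial}), and elementary linear algebra over $\mathbb F_p$. The second inequality, $rd \le dp^d$, is immediate: $r=\Per_{f_1}(x)$ is at most the size $p^d$ of $(\ZZ/p\ZZ)^d$.

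\textbf{Step 1: linearization along the cycle.} Fix $n$ and let $z_j$ be a lift of $\pi_n(x_j)$. For $j\ge g(n)$ the classes $\pi_n(x_j)$ lie on the $f_n$-cycle $C$ of length $T=\Per_{f_n}(x)$. For each $c\in C$ let $F_c\cong(\ZZ/p\ZZ)^d$ be the fiber of level-$(n+1)$ classes over $c$, identified via $\tau$ from \eqref{tau}. By Lemma~\ref{taylor}, $f$ maps $F_c$ into $F_{f_n(c)}$ by an affine map $t\mapsto L_c t+\beta_c$, where $L_c$ is $Df(c)$ reduced modulo $p$. By Lemma~\ref{trivial}, $L_c$ depends only on $c \bmod p$. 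Since $g(n)\ge g(1)$, the classes $x_j \bmod p$ for $j\ge g(n)$ run through the $f_1$-cycle of length exactly $r$. Hence the sequence of linear parts $L_j:=L_{\pi_n(x_j)}$ is $r$-periodic in $j$ for $j\ge g(n)$.

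\textbf{Step 2: rank stabilization.} Write $W_j^{(k)}\subseteq F_{\pi_n(x_{j})}$ for the image of the whole fiber over $\pi_n(x_{j-k})$ under $k$ steps. This is an affine subspace whose direction is the image of $P_j^{(k)}:=L_{j-1}\cdots L_{j-k}$. The dimensions $\rho_k:=\dim W^{(k)}_j$ are nonincreasing in $k$. By $r$-periodicity they depend on $j$ only modulo $r$, and moving the window shows $\rho_{k+1}\le\rho_k$ uniformly.

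I will show that if no rank drop occurs over a full block of $r$ consecutive steps, then none occurs ever after. The reason is that the linear maps encountered in the next block are the same $L$'s by periodicity, and they act injectively on the same image subspaces. This also works over arbitrary starting phases, since we are taking the minimum over the $r$ phases. Since the rank can drop at most $d$ times, after $k=rd$ steps every subsequent step $W_j^{(k)}\to W_{j+1}^{(k+1)}$ is a bijection, and $W_{j+1}^{(k+1)}=W_{j+1}^{(k)}$.

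\textbf{Step 3: conclusion.} The union $\bigcup_{c\in C}W_c$ of these stabilized subspaces is a finite set mapped bijectively onto itself by $f_{n+1}$, so every point in it is periodic. The point $\pi_{n+1}(x_{g(n)+rd})$ lies in $W^{(rd)}$, because it is the image after $rd$ steps of an element of the fiber over $\pi_n(x_{g(n)})$. Hence this point is $f_{n+1}$-periodic, which gives $g(n+1)\le g(n)+rd$.

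The main obstacle is making Step 2 rigorous. Equal dimension after one extra step means the next map is injective on the current image, but I must ensure that this injectivity propagates around the whole $T$-cycle and is not merely observed for one phase. I will try to handle this by working with the $r$-periodic family of $\mathbb F_p$-subspaces $\operatorname{Im}P_j^{(k)}$, which are nested as $k$ grows for fixed $j$. For each residue $j \bmod r$ this is a decreasing chain, so it can strictly decrease at most $d$ times. Equality of the chains at $k$ and $k+r$ for all phases then forces constancy from $k$ on. The affine translation parts $\beta_c$ do not affect dimensions, so they can be ignored throughout.
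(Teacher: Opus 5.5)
Your proposal is correct, and it reaches the same bound $g(n)+rd$ by different bookkeeping than the paper.

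\textbf{The paper's route.} The paper does not push whole fibres forward. With $m=g(n)$ and $T=\Per_{f_n}(x_m)$, it tracks one displacement vector $u_j=(x_{j+T}-x_j)/p^n \bmod p$. This vector evolves purely linearly, $u_{j+1}=A_ju_j$, because the translation parts cancel when you subtract two orbit points lying over the same cycle point. Hence $u_{m+rd}=B^du_m\in W:=\operatorname{Im}(B^d)$, where $B$ is the one-period product and $B(W)=W$. The paper then uses a single fibre, over $z=x_{m+rd}$, and the first-return map $J(t)=B^qt+u_{m+rd}$ of $f^T$ there (with $T=qr$). Since the translation lies in $W$ and $B^q$ is invertible on $W$, $J$ permutes $W\ni 0$, so $z$ is periodic modulo $p^{n+1}$.

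\textbf{Your route.} You follow images of entire fibres. These are affine subspaces with direction $\operatorname{Im}P^{(k)}_j$, so you never compute a translation part. You then conclude by the principle that a finite set mapped bijectively onto itself consists of periodic points.

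\textbf{The cost, and how to settle it.} The price is exactly the obstacle you flagged: you must control every phase around the $T$-cycle, not just one base point. Your fix works, and the clean statement is the identity $P^{(k+r)}_j=\tilde B_jP^{(k)}_j$ with $\tilde B_j:=L_{j-1}\cdots L_{j-r}$, which follows from $r$-periodicity of the $L_j$.
\begin{itemize}
\item Along $k\in r\NN$ your nested chain is $\operatorname{Im}\tilde B_j^{\ell}$, which stabilizes at $\ell=d$. This is the same linear-algebra fact as the paper's $\operatorname{Im}(B^d)=\operatorname{Im}(B^{d+1})$, and it is where $rd$ comes from in both proofs.
\item Nestedness in $k$ then gives constancy of $\operatorname{Im}P^{(k)}_j$ for all $k\ge rd$.
\item Your step-by-step bijections also need the stable dimensions at $j$ and $j+1$ to agree. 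This holds because $L_j$ maps the stable subspace at $j$ onto the one at $j+1$, and the dimension returns to its value after $r$ steps.
\item To use windows with $j-k<g(n)$, extend the $L_j$ periodically or shift $j$ by multiples of $T$.
\end{itemize}

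\textbf{What each buys.} Your route gives a slightly stronger uniform statement: every class in the fibre over $\pi_n(x_{g(n)})$ becomes $f_{n+1}$-periodic after $rd$ steps, not only the orbit point itself. The paper's route is shorter, because the first-return map at one base point absorbs the whole $T$-cycle at once.
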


\begin{proof}
Set $m:=g(n)$, and let $T =\Per_{f_n} (x_m)$. Thus
$x_{j+T}\equiv x_j\pmod{p^n}$ for every $j\geq m$.
Note that $\widetilde{u}_j\in\ZZ_p^d$ defined by 
$x_{j+T}=x_j+p^n\widetilde{u}_j$ belongs to $\ZZ_p^d$. 

For $j\geq m$, define
\[
u_j:= \widetilde{u}_j \pmod p\in\mathbb F_p^d, \quad
\text{and} \quad A_j:=Df(x_j)\pmod p. \]
By Corollary~\ref{linearization} we can write 
\[
f(x_j+p^n\widetilde{u}_j)
\equiv
f(x_j)+p^nDf(x_j)\widetilde{u}_j
\pmod{p^{n+1}}.
\]
Applying the definition $x_{j+1}=f(x_j)$ and $x_{j+T+1}=f(x_{j+T})$, we obtain
\[
x_{j+T+1}-x_{j+1}
\equiv
p^nDf(x_j)\widetilde{u}_j
\pmod{p^{n+1}}.
\]
This implies 
\[ \frac{x_{j+T+1}-x_{j+1}}{p^n} \equiv Df(x_j)\widetilde{u}_j \pmod p.
\]
Hence by the definition $u_j$, we obtain the recursive formula 
\[
u_{j+1}=A_j u_j.
\]
Moreover, since $x_{j+r}\equiv x_j\pmod p$ for every $j\geq m$, we deduce $A_{j+r}=A_j$ holds for all $j \ge m$.   Set $ B:=A_{m+r-1}\cdots A_m.$ Then a repeated application of the recursion gives
\[
u_{m+\ell r}=B^\ell u_m
\]
for every $\ell\geq 0$. Consider the descending sequence of subspaces defined by 
\[
\mathbb F_p^d\supseteq \operatorname{Im}(B)\supseteq
\operatorname{Im}(B^2)\supseteq\cdots.
\]
We first claim that if for some $i \ge 1$ we have $\operatorname{Im}(B^i)= \operatorname{Im}(B^{i+1})$, then  $\operatorname{Im} (B^i)= \operatorname{Im}(B^j)$ holds for all $j \ge i$. To see this notes that if $v= B^{j}w$ then $v= B^{j-i}B^{i}w$ for some $w$. Since $\operatorname{Im}(B^i)= \operatorname{Im}(B^{i+1})$, there exists $w'$ with $B^iw = B^{i+1} w'$. This implies that $ x= B^{j+1} w'$, proving the claim. Hence, 
$\operatorname{Im}(B^d)= \operatorname{Im}(B^{d+1}).$ If this is not the case, the claim we just proved shows that $\dim \operatorname{Im}(B^i) \ge \dim \operatorname{Im}(B^{i+1})+1$ for all 
$0 \le i \le d$, which is impossible.

It follows that  for 
$W:=\operatorname{Im}(B^d)$ one has $B(W)=W$. In particular, the restriction $B|_W$ is an automorphism. Set $k:=m+rd$ and $z:=x_k.$ Then $ u_k=B^d u_m\in W.$ Since $k\equiv m\pmod r$, the points $z$ and $x_m$ occupy the same position on the cycle modulo $p$. Now let $F:=f^T$. Since $T=qr$, the chain rule and the periodicity of the matrices $A_j$ give $ DF(z)\equiv B^q\pmod p.$ Furthermore, $ F(z)=x_{k+T}\equiv z+p^n u_k\pmod{p^{n+1}}.$ Identify the fiber above $z\bmod p^n$ with $\mathbb F_p^d$ by sending $t\in\mathbb F_p^d$ to $z+p^n t\bmod p^{n+1}$. Under this identification, the map induced by $F$ is $J(t):=B^q t+u_k.$ Indeed, using Lemma~\ref{taylor} we have 
\[
F(z+p^n t)\equiv z+p^n\bigl(B^q t+u_k\bigr)
\pmod{p^{n+1}}.
\]
Since $u_k\in W$ and $B^q(W)=W$, the affine map $J$ preserves $W$. Its linear part is invertible on $W$, so $J|_W$ is a permutation of the finite set $W$. In particular, $0\in W$ is periodic under $J$. Hence there exists $s\geq 1$ such that $J^s(0)=0$. Translating this back to the orbit of $z$ gives $F^s(z)\equiv z\pmod{p^{n+1}}$, or equivalently, $f^{sT}(x_k)\equiv x_k\pmod{p^{n+1}}.$ Thus, $x_k$ is periodic modulo $p^{n+1}$. This proves the first claim.  Finally, $r\leq p^d$, since the cycle modulo $p$ is contained in the finite set $\mathbb F_p^d$ and $k=m+rd$. Hence, $g(n+1)\leq g(n)+dp^d$.
\end{proof}

\subsection{Proof of Theorem~\ref{thm:Hausdorff}}

\begin{proof}[Proof of Theorem~\ref{thm:Hausdorff}]
We will use Theorem~\ref{them:finitary} to prove suitable bounds for the size of 
$\pi_n(E)$. Denote the orbit of $x$ by
$ O:=\{ f^{(j)}(x_0):j\geq 0\}$.
Because the target $(\ZZ/p^n\ZZ)^d$ is finite and discrete, taking the closure of a subset of $\ZZ_p^d$ does not change its image under $\pi_n$. Therefore,
$ \pi_n(E) = \pi_n(O).$
This orbit consists of its pre-periodic part followed by its periodic part. Consequently,
\[
|\pi_n(O)| =
\Tail_{f_n}(x_0)+\Per_{f_n}(x_0).
\]
Note that since $\Tail_{f_n}(x_0)$ is growing at most linearly, we only need to consider the growth of periods. 
If $\Per_{f_n}(x_0)$ is bounded, then clearly, we have $|\pi_n(O)|= O(n)$ and 
hence $\dim_B(E)=0$. Otherwise, it follows from Theorem~\ref{them:finitary} that the sequence $ |\log \Per_{f_n}(x_0)- n \log p| $ is bounded. The claim follows immediately. 
\end{proof}

\section{Construction of Examples and $S$-adic Variations} \label{sec:examples}

Since there do not exist any ergodic polynomial maps on $\mathbb{Z}_p^d$ for $d>1$, the final aim of this article is to describe a way to obtain examples of ergodic maps in this setting. In view of Theorem~\ref{thm:ergodic:1d}, this yields examples of low-discrepancy sequences in $\ZZ_p^d$. Indeed, the following result implies that ergodic maps on $\mathbb{Z}_p^d$ can actually all be found by analyzing the one-dimensional case. 
\begin{theorem} \label{thm:ergodic_maps_Td} Let $T_d: \mathbb{Z}_p^d \to \mathbb{Z}_p^d$ be a continuous map. Then $T_d$ is ergodic if and only if it is conjugate to an ergodic function $T: \mathbb{Z}_p \to \mathbb{Z}_p$.
\end{theorem}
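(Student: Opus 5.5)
The plan is to construct an explicit conjugacy through a measure-preserving homeomorphism $\phi:\ZZ_p\to\ZZ_p^d$ and to show that ergodicity transfers in both directions. I read "conjugate" here as conjugate via such a $\phi$, that is, a homeomorphism carrying Haar measure $\mu_1$ to $\mu_d$. This is the reading needed for ergodicity, a measure-theoretic notion, to be meaningful on both sides.

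First I fix $\phi$ by digit interleaving. Write $t=\sum_{i\ge 0} a_i p^i\in\ZZ_p$ and set
\[
\phi(t)=\Bigl(\sum_{j\ge 0} a_{jd}p^j,\ \sum_{j\ge 0} a_{jd+1}p^j,\ \dots,\ \sum_{j\ge 0} a_{jd+d-1}p^j\Bigr).
\]
This is a bijection. It maps each residue class $a+p^{dk}\ZZ_p$ onto a residue class $\phi(a)+p^k\ZZ_p^d$. More generally, it maps the cylinder determined by the first $\ell$ digits onto a polydisc of measure $p^{-\ell}$. Consequently $\phi$ and $\phi^{-1}$ are continuous, being maps between compact Hausdorff spaces that preserve cylinder structure. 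Since Haar measures are determined by their values on cylinders, the identity $\mu_d(\phi(C))=\mu_1(C)$ on cylinders gives $\phi_*\mu_1=\mu_d$.

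For the direction ($\Leftarrow$), suppose $T_d=\phi\circ T\circ\phi^{-1}$ with $T$ ergodic on $(\ZZ_p,\mu_1)$. Then $T_d$ is measure-preserving, because $T_d^{-1}(S)=\phi(T^{-1}(\phi^{-1}S))$ and $\phi$ is measure-preserving. If $T_d^{-1}(S)=S$, then $\phi^{-1}(S)$ is $T$-invariant. Its measure is therefore $0$ or $1$, and the same holds for $\mu_d(S)=\mu_1(\phi^{-1}S)$. This step is formal.

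For the direction ($\Rightarrow$), given an ergodic continuous $T_d$, I define $T:=\phi^{-1}\circ T_d\circ\phi$. This map is continuous, and the same formal argument, with the roles of $\phi$ and $\phi^{-1}$ exchanged, shows that $T$ is measure-preserving and ergodic. So every ergodic $T_d$ arises from an ergodic one-dimensional map, which is the conjugacy claimed.

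The main obstacle is interpretive rather than technical. The statement is trivial for any measure-preserving homeomorphism $\phi$, so the substance lies in exhibiting $\phi$ explicitly and checking that it is a homeomorphism pushing $\mu_1$ to $\mu_d$. I would also remark on the $1$-Lipschitz refinement. The interleaving map sends $p^{dk}$-classes to $p^k$-classes, so if $T$ is $1$-Lipschitz and transitive modulo $p^{dk}$, then $T_d$ is compatible at the levels $p^k$. Theorem~\ref{thm:Anashin} then yields transitivity of $T_d$ modulo $p^k$, and with it low-discrepancy orbits via Theorem~\ref{thm:ergodic:1d}. The care needed in this refinement is that compatibility of $T$ modulo $p^{dk}$ implies compatibility of $T_d$ modulo $p^k$, but the converse does not hold. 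The $\Rightarrow$ direction in the Lipschitz category therefore fails in general, and I would state the theorem only in the continuous category, as written.
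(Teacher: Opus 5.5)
Your proof is correct and is the paper's argument: the same digit-interleaving homeomorphism $\phi$, with ergodicity transported formally in both directions. You also supply what the paper leaves implicit, namely that $\phi$ is a homeomorphism with $\phi_*\mu_1=\mu_d$, and you write the conjugations in the correct order, $\phi^{-1}\circ T_d\circ\phi$ and $\phi\circ T\circ\phi^{-1}$; the paper's proof has them swapped.

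One caveat on your closing aside: the failure of $\Rightarrow$ in the $1$-Lipschitz category holds only for the fixed $\phi$. An ergodic $1$-Lipschitz $T_d$ is transitive modulo every $p^k$ (Theorem~\ref{thm:Anashin}), so the orbit map $n\mapsto T_d^{n}(x_0)$ extends to a measure-preserving homeomorphism $\ZZ_p\to\ZZ_p^d$ that conjugates the $1$-Lipschitz odometer $x\mapsto x+1$ to $T_d$.
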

\begin{proof}[Proof of Theorem~\ref{thm:ergodic_maps_Td}]
A homeomorphism $\phi: \mathbb{Z}_p \to \ZZ_p^d$ is given by $z = \sum_{j=0}^\infty a_jp^j \mapsto \left(x_1,x_2,\ldots,x_d \right)$ with
\[
x_k = \sum_{j=0}^\infty a_{k-1+j\cdot d} p^{j}.
\]
If $T_d$ is ergodic, then the map $\phi \circ T_d \circ \phi^{-1}$ is an ergodic map on $\mathbb{Z}_p$. On the other hand if $T$ is ergodic, then $\phi^{-1} \circ T_d \circ \phi$ is an ergodic map on $Z_p^d$ by the definition of the Haar measure.
\end{proof}

Together with Theorem~\ref{thm:ergodic:1d}, this rather simple observation allows us to construct explicit examples of low-discrepancy in the one- and multi-dimensional setting.

One main motivation to study $p$-adic discrepancy lies in the fact that there are important connections between the discrepancy of real and $p$-adic sequences. Recall that the Monna map 
$\textrm{Mon}_p: \ZZ_p \to [0,1]$ is defined via 
\[ \textrm{Mon}_p  \left( \sum_{i=0}^{\infty} a_i p_i \right) = \sum_{i=0}^{\infty} a_i p_{-i-1}, \qquad 
a_i \in \{ 0, 1, \dots, p-1 \}. \]

It was proven by Meijer \cite{Mei68} that $(x_n)_{n \in \NN}$ is a $p$-adic low-discrepancy sequence if and only if the sequence $(\textrm{Mon}_p(x_n))_{n \in \NN}$ of real number has low-discrepancy. However, the induced maps on $[0,1]$ coming from algebraically defined maps on $\ZZ_p$ do not have any specific features that make them easier to study. Hence, it is preferable to study these sequences in $\ZZ_p$ directly, as is also the approach of our article. 

Another reason for studying ergodic $1$-Lipschitz transformations on $\ZZ_p^d$ is that they are promising candidates for the construction of new pseudorandom number generators. Indeed, ergodicity implies that every state is visited uniformly over time. The induced transformation modulo $p^n$ consists of a single cycle, guaranteeing the maximal possible period length $p^n$. Moreover, the algebraic structure of $\ZZ_p$ enables the construction of highly nonlinear yet computationally efficient state-transition functions, while still allowing rigorous mathematical analysis of their dynamical and distributional properties. Research on the application of ergodic $p$-adic transformations to pseudorandom number generation was pioneered in \cite{Ana04, Ana10}, see also \cite[Chapter~14]{Ana25} for a recent comprehensive overview. This potential application furthermore provides motivation for now presenting explicit examples of ergodic $1$-Lipschitz maps.

\subsection{One-Dimensional Setting} The obvious question to ask is how to identify ergodic $1$-Lipschitz functions. To do this we rely on \cite{Mah58}, where Mahler showed that every continuous function $f : \mathbb{Z}_p \to \mathbb{Q}_p$ can be written as a series
\[
f(x) = \sum_{n=0}^\infty a_n \binom{x}{n}
\]
with $a_n \to 0$, where the binomial coefficient is defined as
\[
\binom{x}{n} = \frac{x(x-1)\cdot\ldots\cdot (x-n+1)}{n!}.
\]
It is possible to give a characterization of $1$-Lipschitz functions on the $p$-adic integers as well as sufficient conditions for ergodicity in terms of the Mahler expansion. These conditions originally go back to Anashin in \cite{Ana94}, see also \cite{Jeo13} for a more recent summary of the results.
\begin{theorem}[Anashin] \label{thm:Anashin_Coefficients} Let $f: \mathbb{Z}_p \to \mathbb{Z}_p$ be a continuous function with Mahler expansion
\[
f(x) = \sum_{n=0}^\infty a_n \binom{x}{n}.
\]
Then $f$ is $1$-Lipschitz if and only if $|a_n| \leq |p|^{\lfloor\log_p(n)\rfloor}$ for all $n \geq 0$. Moreover, if the following conditions are satisfied, then $f$ is ergodic
\begin{align*}
& a_0 \not \equiv 0 \pmod{p},\\
& a_1 \equiv 1 \pmod {p}, \quad p \neq 2,\\
& a_1 \equiv 1 \pmod {4}, \quad p = 2,\\
& a_n \equiv 0 \pmod {p^{\lfloor\log_p(n+1)\rfloor+1}}, \quad n\geq2.
\end{align*}
\end{theorem}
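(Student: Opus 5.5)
Theorem~\ref{thm:Anashin_Coefficients} has two parts: a characterization of $1$-Lipschitz functions, and a sufficient condition for ergodicity. The plan treats them in that order.

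\textbf{The $1$-Lipschitz criterion.} The plan is to work with finite differences. Write $(\Delta f)(x)=f(x+1)-f(x)$. Mahler's theorem gives $a_n=(\Delta^n f)(0)=\sum_{k}(-1)^{n-k}\binom{n}{k}f(k)$. Set $\ell(n)=\lfloor\log_p n\rfloor$.

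For necessity, the $1$-Lipschitz property says that $f(x+p^j)-f(x)\equiv 0 \pmod{p^j}$. From this I would show by induction on $j$ that $\Delta^{n}f(0)$ is divisible by $p^{\ell(n)}$. The key identity is that, modulo $p$, $\Delta^{p^j}$ agrees with the difference operator of step $p^j$, $x\mapsto f(x+p^j)-f(x)$. More precisely, $(E-1)^{p^j}\equiv E^{p^j}-1$ in $\ZZ[E]/(p)$. The higher-order corrections would be tracked by Lucas-type congruences for $\binom{p^j}{k}$.

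For sufficiency, I would use the standard estimate
\[ \Bigl|\binom{x+p^k t}{n}-\binom{x}{n}\Bigr|_p \le p^{-k+\ell(n)}, \]
which follows from Vandermonde: $\binom{x+h}{n}-\binom{x}{n}=\sum_{i\ge1}\binom{h}{i}\binom{x}{n-i}$, and $|\binom{p^k t}{i}|_p\le p^{-k+v_p(i)}$. Multiplying by $|a_n|_p\le p^{-\ell(n)}$ and taking the ultrametric maximum over $n$ yields $|f(x+h)-f(x)|_p\le |h|_p$.

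\textbf{Ergodicity.} By Theorem~\ref{thm:Anashin} it suffices to show that $f$ is transitive modulo $p^n$ for every $n$. The strategy is to write $f=a_0+a_1x+g(x)$ with $g(x)=\sum_{n\ge2}a_n\binom{x}{n}$ and compare $f$ with the affine map $x\mapsto a_0+a_1x$. That affine map is ergodic under the stated conditions on $a_0,a_1$, by the criterion recalled after Theorem~\ref{thm:ergodic:1d}.

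The congruences $a_n\equiv0\pmod{p^{\ell(n+1)+1}}$ are the ones that make $g$ a small perturbation. They should imply that $g$ is ``$p$-Lipschitz'', that is, $g(x+p^kt)\equiv g(x)\pmod{p^{k+1}}$. This comes from the Vandermonde estimate above, with the exponent improved by one and with the edge cases $n=p^j-1$ handled by the $\ell(n+1)$ shift. The same reasoning should also give $g(x)\equiv 0 \pmod p$ and, for $p=2$, $g\equiv0\pmod 4$ at level~$2$.

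I would then apply the standard lifting criterion for transitivity: if $f$ is transitive modulo $p^n$, then $f$ is transitive modulo $p^{n+1}$ exactly when $f^{p^n}(x)\not\equiv x\pmod{p^{n+1}}$. Equivalently, the first-order term of the $p^n$-fold iterate over the cycle must be nonzero modulo $p$.

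To compute this term, I would write the iterate over one full cycle, $f^{p^n}(x)-x=\sum_{y\bmod p^n}(f(y)-y)$. Since $f$ is transitive modulo $p^n$, the orbit runs through every residue class exactly once, which is what justifies this sum. The sum is then evaluated as a sum over a complete residue system, using the fact that $g$ is $p$-Lipschitz. The quantity $\sum_{y=0}^{p^n-1}\binom{y}{n'}=\binom{p^n}{n'+1}$ has valuation at least $n-\ell(n'+1)$, so the higher Mahler terms contribute only multiples of $p^{n+1}$ once multiplied by $a_{n'}$. What remains is the affine contribution $p^n a_0+a_1\binom{p^n}{2}$ together with the linear-part corrections, and this should reduce to exactly the affine-case computation.

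\textbf{Main obstacle.} I expect the hardest step to be making this cycle-sum precise. The telescoping identity is only a first-order approximation. For the exact statement I would track $f^{p^n}(x)-x$ via the product of derivative terms, in the spirit of Lemma~\ref{lem:goup}. For $p=2$ the quadratic term $a_1\binom{2^n}{2}$ forces the stronger condition $a_1\equiv1\pmod 4$. The induction must start at $n=1$ for odd $p$ and at $n=2$ for $p=2$.
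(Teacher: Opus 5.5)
The paper does not prove this statement. It quotes it from Anashin \cite{Ana94} (see also \cite{Jeo13}), so there is no in-paper argument to compare with. Judged on its own, your plan is essentially a complete proof, but three points need fixing.

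\emph{Necessity in the Lipschitz criterion.} The congruence $(E-1)^{p^j}\equiv E^{p^j}-1 \pmod p$, like any Lucas-type congruence, only gives one factor of $p$. What closes the induction is Kummer's exact valuation $v_p\binom{p^j}{k}=j-v_p(k)$, together with the strengthened hypothesis ``$\Delta^k f\equiv 0\pmod{p^s}$ for all $k\ge p^s$'' for every $s<j$. Then
\[
\Delta^{p^j}f=(E^{p^j}-1)f-\sum_{k=1}^{p^j-1}\binom{p^j}{k}\Delta^k f\equiv 0 \pmod{p^j}.
\]
Here the first term vanishes by the $1$-Lipschitz property. In each summand, $s=v_p(k)<j$ and $k\ge p^s$, so the summand is divisible by $p^{j-s}\cdot p^s$. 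Applying $\Delta$ further preserves the congruence.

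\emph{Ergodicity.} The obstacle you anticipate is not there. Put $h:=f-\mathrm{id}=a_0+(a_1-1)x+g$. Since $p\mid a_1-1$ and $g$ is $p$-Lipschitz, $h$ is $p$-Lipschitz, so $h(y)\bmod p^{n+1}$ depends only on $y\bmod p^n$. The telescoping identity $f^{p^n}(x)-x=\sum_{i=0}^{p^n-1}h(f^i(x))$ is exact. Replacing the orbit points by $0,\dots,p^n-1$ is then exact modulo $p^{n+1}$, which gives
\[
f^{p^n}(x)-x\equiv p^n a_0+(a_1-1)\binom{p^n}{2}+\sum_{m\ge 2}a_m\binom{p^n}{m+1}\equiv p^n a_0 \pmod{p^{n+1}}.
\]
Note that the coefficient is $a_1-1$, not $a_1$. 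So no derivative tracking is needed. It would not be available anyway: Lemma~\ref{lem:goup} concerns polynomial maps, whereas $1+x+p^2\psi$ satisfies all the hypotheses for every $1$-Lipschitz $\psi$, and such $\psi$ need not be differentiable.

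\emph{The lifting criterion.} Its ``if'' direction is not automatic for $1$-Lipschitz maps. For odd $p$, a non-fixed point of a self-map of a $p$-point fiber need not lie on a $p$-cycle. The criterion holds here because $f(x+p^nt)\equiv f(x)+p^nt\pmod{p^{n+1}}$. Hence $f^{p^n}$ acts on the fiber over $x$ as translation by $(f^{p^n}(x)-x)/p^n \bmod p$. You should state this explicitly. With it, the induction can start at $n=1$ for $p=2$ as well, since $f(x)\equiv x+a_0\pmod 2$.
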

Using these, we can easily construct some one-dimensional examples.
\begin{example} \label{ex:polynomials} At first, we re-consider the polynomial case: Besides the linear examples $T_1(x)= x +b$ with $b \not \equiv 0 \pmod {p}$ mentioned already in Section~\ref{sec:ud_and_discrepancy}, we can e.g. derive that the quadratic polynomial $T_2(x) = p^{\lfloor \log_p(2)\rfloor+1} \cdot \frac{x(x-1)}{2} + x + b$ is an ergodic map (if $p>3$). Hence each orbit of $T_2(x)$ is a low-discrepancy sequence by Theorem~\ref{thm:ergodic:1d}. At the same time, it must hold that $T_2(x)$ is a permutation polynomial: since $T_2^i(0)$ for $i=1,\ldots,p^2$ attains all residue classes $\bmod\ p^2$ and $T_2(x)$ is compatible, also $T_2(i)$ for $i=1,\ldots,p^2$ must attain all residue classes. Hence, $T_2(i)$ must be a permutation polynomial $\bmod\ p^2$. 
\end{example}
Note that the polynomial $T_2(x)$ from Example~\ref{ex:polynomials} does not appear in \cite[Theorem~2.8]{Wei25}, since it is not normalized, that is, its leading coefficient is not equal to~$1$. Moreover, observe that $\lfloor \log_p(n+1) \rfloor + 1 \geq 2$ for all $n \geq p-1$. Consequently, all examples arising from Theorem~\ref{thm:Anashin_Coefficients} are, modulo~$p^2$, essentially of the form $T_2(x)$. Finally, by Theorem~\ref{thm:permutation}, the reduction modulo~$p^2$ is decisive for obtaining low-discrepancy orbits: the permutation property modulo~$p^2$ already implies the permutation property modulo~$p^n$ for arbitrary~$n$.
\medskip

When starting the work on this article, the initial aim was to construct a $p$-adic low-discrepancy sequence, which does not build upon polynomials. Also this is achievable by applying Theorem~\ref{thm:Anashin_Coefficients}. 
\begin{example} To give a specific example, we can consider the orbit of the function
\[
f(x) = 1 + x + \sum_{n=2}^\infty p^{\lfloor\log_p(n+1)\rfloor+1} \binom{x}{n}.
\]
For this non-polynomial examples, the orbit values cannot be computed exactly because the defining sums contain infinitely many terms. Their reductions modulo $p^k$, however, can be computed easily, since all but finitely many coefficients satisfy $a_n\equiv 0\pmod{p^k}$.

\end{example}

\subsection{Multi-Dimensional Setting $\mathbb{Z}_p^d$} The homeomorphism $\phi: \mathbb{Z}_p \to \ZZ_p^d$ is given by 
\[
z = \sum_{j=0}^\infty a_jp^j \mapsto \left(x_1,x_2,\ldots,x_d \right),
\]
where
\[
x_k = \sum_{j=0}^\infty a_{k-1+j\cdot d} p^{j}.
\]
Hence, the map $g=\phi \circ T_1 \circ \phi^{-1}$ with $T_1(x) = x+1$ is an ergodic map on $\ZZ_p^d$. In the special case $d=2$, it is easy to describe what the map $g$ actually does. If we have an element $z=(a,b) \in \mathbb{Z}_p^2$ with $a = \sum_{i=0}^\infty a_i p^i, b = \sum_{i=0}^\infty b_i p^i$, then $\phi^{-1}(a,b) = \sum_{i=0}^\infty a_i p^{2i} + \sum_{i=0}^\infty b_i p^{2i+1} =: \sum_{i=0}^\infty c_i p^i$. Next, $T_1(x)$ increases the coefficient $c_0$ by $1$ if $c_0 < p-1$. If $c_0 = p-1$, then $c_0$ becomes $0$ and the same logic applies to $c_1$ and inductively to all other coefficients. Afterwards, the coefficients get split up into the odd and even ones by $\phi$ to obtain an element in $\mathbb{Z}_p^d$. If we look at the orbit of $(0,0)$ under this map, the first component of the $n$-th element in the orbit is in other words given by $\sum_{i=0}^\infty n_{2i} p^{i}$ if the $p$-adic representation of $n-1$ equals $n-1=\sum n_ip^i$. Similarly the second component is then $\sum_{i=0}^\infty n_{2i+1} p^i$. \\[12pt]
According to \cite[Section~6]{Ana06}, for any $1$-Lipschitz function $g: \mathbb{Z}_p \to \mathbb{Z}_p$, the function $f(x) = 1 +x +p(g(x+1)-g(x))$ is always ergodic on $\mathbb{Z}_p$. This observation and Theorem~\ref{thm:ergodic_maps_Td} allow for the construction of more examples in the same manner as we just described for $T_1(x)=x+1$. Instead of using $T_1(x) = x+1$, the ergodic map $T: \mathbb{Z}_p \to \mathbb{Z}_p$ may thus also be chosen as a polynomial of degree $d$. The latter are ergodic if and only if the mapping $z \mapsto f(z) \pmod {p^{\log_p(d)+3}}$ is compatible and transitive on the residue class $\mathbb{Z}_p/p^{\log_p(d)+3}\mathbb{Z}_p$, compare again \cite[Section~6]{Ana06}.

\subsection{$S$-adic variations} The simplest way to find multi-dimensional low-discrepancy sequences is to work with different prime bases $p_1,\ldots,p_d$ simultaneously instead of looking at $\ZZ_p^d$ as has already been realized in \cite{Mei68}. To conclude this paper, we shortly explain the approach. Let $S = (p_1,\ldots,p_d)$ be a vector of distinct prime numbers and consider the ring
\[
\mathbb{Z}_S := \mathbb{Z}_{p_1} \times \mathbb{Z}_{p_{2}} \times \ldots \times \mathbb{Z}_{p_d}.
\]
For a vector $K=(k_1,\ldots,k_d)$ of non-negative integers and $Z = (z_1,\ldots,z_d) \in \mathbb{Z}_S$ consider the neighborhoods
\[
\textrm{Disc}_S(Z,K) :=  \textrm{Disc}_{p_1}(z_1,k_1) \times \ldots \times \textrm{Disc}_{p_d}(z_d,k_d).
\]
in $\mathbb{Z}_S$. The normalized Haar measure on $\mathbb{Z}_S$ is denoted by $\mu$ so that $\mu(\textrm{Disc}_S(Z,K)) = \prod_{i=1}^d p_i^{-k_i}$. For a sequence $(x_n)$ in $\mathbb{Z}_S$ the expression
\[
\delta_N^{(P)}(x_n) := \sup_{Z \in \mathbb{Z}_S, k \in K} \left| \frac{\#\left( \textrm{Disc}_S(Z,K) \cap \{ x_1,\ldots,x_N \}\right)}{N} - \mu\left(\textrm{Disc}_S(Z,K) \right) \right|
\]
represents the $P$-adic discrepancy. The low-discrepancy property from Theorem~\ref{thm:ergodic:1d} can then easily be transferred to $Z_S$.
\begin{proposition} \label{prop:multidim} Let $p_1,\ldots,p_d$ be distinct prime numbers and let $f_i: \mathbb{Z}_p \to \mathbb{Z}_p$ be ergodic $1$-Lipschitz functions for $i=1,\ldots,d$. For $x_0 = (x_0^1,\ldots,x_0^d) \in \mathbb{Z}_S$  the sequence
\[
(x_n) := \left(f_1^{(n)}(x_0^1),\ldots,f_d^{(n)}(x_0^d)\right)
\]
is a low-discrepancy in $\mathbb{Z}_S$.
\end{proposition}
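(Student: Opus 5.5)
The plan is to reduce everything to the Chinese Remainder Theorem and then repeat the block-counting argument from the proof of Theorem~\ref{thm:ergodic:1d}. Fix $K=(k_1,\ldots,k_d)$ and $Z\in\mathbb{Z}_S$, and set $P:=\prod_{i=1}^d p_i^{k_i}=\mu(\textrm{Disc}_S(Z,K))^{-1}$.

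\textbf{Step 1 (each coordinate is periodic).} By Theorem~\ref{thm:Anashin}, each $f_i$ is transitive modulo $p_i^{k_i}$. Hence the $i$-th coordinate sequence $\pi_{k_i}(f_i^{(n)}(x_0^i))$ is purely periodic with exact period $p_i^{k_i}$, and it runs through every residue class modulo $p_i^{k_i}$ exactly once in each period.

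\textbf{Step 2 (combine the coordinates).} Map $n\bmod P$ to the tuple of the residues $n\bmod p_i^{k_i}$. Since the $p_i$ are distinct, the moduli $p_i^{k_i}$ are pairwise coprime, so by the Chinese Remainder Theorem this map is a bijection $\ZZ/P\ZZ\to\prod_i \ZZ/p_i^{k_i}\ZZ$. For each $i$, the index $n\bmod p_i^{k_i}$ determines the $i$-th coordinate residue of $x_n$, and by Step 1 this correspondence is itself a bijection onto $\ZZ/p_i^{k_i}\ZZ$. Composing the two, the tuple of residues of $x_n$ modulo $(p_1^{k_1},\ldots,p_d^{k_d})$ is a bijective function of $n\bmod P$. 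Since $\textrm{Disc}_S(Z,K)$ is exactly one such tuple of residue classes, every block of $P$ consecutive terms contains exactly one point of the disc.

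\textbf{Step 3 (count).} Write $N=rP+s$ with $0\le s<P$. The first $N$ terms then contain either $r$ or $r+1$ points of the disc. It follows that
\[
\left|\frac{\#\left(\textrm{Disc}_S(Z,K)\cap\{x_1,\ldots,x_N\}\right)}{N}-\frac1P\right|\le \frac1N.
\]
This bound is uniform in $Z$ and $K$, so $\delta_N^{(P)}(x_n)\le 1/N$.

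The only point needing care is that the periods are exactly $p_i^{k_i}$, so that the coordinates are pairwise coprime-periodic. That is precisely what Theorem~\ref{thm:Anashin} gives. Everything else is routine.
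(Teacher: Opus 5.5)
Your proof is correct and follows the same route as the paper: per-coordinate transitivity from Theorem~\ref{thm:Anashin}, the Chinese Remainder Theorem, and the block-counting argument from Theorem~\ref{thm:ergodic:1d}. Your Step~2 shows that the joint residue of $x_n$ is a bijective function of $n \bmod P$, which makes explicit what the paper compresses into ``by the Chinese remainder theorem'' (per-coordinate counts alone would not give the joint count), and it yields the explicit bound $\delta_N^{(P)}(x_n)\le 1/N$.
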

\begin{proof} Let $N \in \mathbb{N}$ be arbitrary and consider an arbitrary $P$-adic disc $\mathrm{Disc}_S(Z,K)$. For each component of the disc we know that $\mathrm{Disc}_{p_i}(z_i,k_i)$ contains $r_i= \lfloor N/p_i^{k_i} \rfloor$ or $r+1$ elements. By the Chinese remainder theorem, $\mathrm{Disc}_S(Z,K)$ thus contains $\lfloor N/ \prod_{i=1}^d p_i^{k_i} \rfloor$ or $\lfloor N/ \prod_{i=1}^d p_i^{k_i} \rfloor + 1$ elements. Therefore, $(x_n)_{n \in \mathbb{N}}$ is a $P$-adic low-discrepancy sequence.
\end{proof}
In contrast to $\mathbb{Z}_p^d$, it is now immediate from the one-dimensional case that there are ergodic affine linear maps on $Z_S$.
\begin{corollary} Let $p_1,\ldots,p_d$ be distinct odd prime numbers with, $(a_1,\ldots,a_d) \in Z_S$ with $a_i \equiv 1 \pmod{ p_i}$, $b=(b_1,\ldots,b_d)\in Z_S$ with $b_i$ relatively prime to $p_i$. Set
\[
A = \begin{pmatrix} a_1& 0 & 0 & \ldots &0\\
0 & a_2 & 0 & \ldots &0 \\
0 & 0 &\ddots &  & 0 \\
\vdots & \vdots & & \ddots & \vdots\\
0 & 0 & \ldots & 0 &a_d\end{pmatrix}.
\]
Then $T(x)=Ax+b$ is ergodic on $\mathbb{Z}_S$.
\end{corollary}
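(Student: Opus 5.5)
The plan is to reduce everything to the one-dimensional ergodic affine maps $T_i(x)=a_ix+b_i$ on $\ZZ_{p_i}$, by way of the product structure of $\ZZ_S$. Since $A$ is diagonal, $T$ acts coordinatewise: for $x=(x^1,\dots,x^d)\in\ZZ_S$ we have $T(x)=(a_1x^1+b_1,\dots,a_dx^d+b_d)$. Each $p_i$ is odd, $a_i\equiv 1 \pmod{p_i}$ and $b_i$ is prime to $p_i$. By the characterization quoted after the proof of Theorem~\ref{thm:ergodic:1d} (\cite[Proposition~1.5]{Ana06}), each $T_i$ is therefore ergodic on $\ZZ_{p_i}$. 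By Theorem~\ref{thm:Anashin}, $T_i$ is transitive modulo $p_i^{k}$ for every $k$: it acts on $\ZZ/p_i^{k}\ZZ$ as a single cycle of length $p_i^{k}$.

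Next I would establish an analogue of Theorem~\ref{thm:Anashin} on $\ZZ_S$: $T$ is ergodic as soon as it permutes each finite quotient
\[
\ZZ_S/(p_1^{k_1}\ZZ_{p_1}\times\cdots\times p_d^{k_d}\ZZ_{p_d})\cong \prod_{i=1}^d \ZZ/p_i^{k_i}\ZZ
\]
as a single cycle. On this quotient $T$ is the product of the cycles $T_i \bmod p_i^{k_i}$, of lengths $p_i^{k_i}$. These lengths are pairwise coprime because the primes are distinct. Hence, by the Chinese remainder theorem, the product permutation is a single cycle of length $\prod_i p_i^{k_i}$: the point $(T_1^n(x^1),\dots,T_d^n(x^d))$ returns to its start exactly when $p_i^{k_i}\mid n$ for all $i$.

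To pass from transitivity to ergodicity, I would argue directly rather than through Anashin's theorem, which is stated only for $\ZZ_p^d$. First, $T$ preserves Haar measure: it maps each cylinder set (a product of discs) bijectively onto a cylinder of the same measure, since each $T_i$ is an isometric bijection because $a_i$ is a unit. Next, let $E$ be a measurable $T$-invariant set. For any level $K=(k_1,\dots,k_d)$, the $\mu$-conditional measures of $E$ on the cells of level $K$ are permuted by $T$. Because $T$ permutes the cells transitively, all these cells contain the same proportion of $E$, and that common proportion is $\mu(E)$. The cylinder sets generate the Borel $\sigma$-algebra, so the Lebesgue density / martingale convergence argument gives $\mu(E)\in\{0,1\}$.

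The main obstacle is this last step, since no $\ZZ_S$-version of Theorem~\ref{thm:Anashin} has been stated in the paper. If a full measure-theoretic argument is undesirable, an alternative is to show unique ergodicity instead. By transitivity, any $T$-invariant probability measure must give equal mass to all cells of every level $K$. Hence it coincides with $\mu$ on cylinders and therefore everywhere. Invariance of $\mu$ together with its uniqueness as an invariant measure then implies that $\mu$ is ergodic, which completes the proof.
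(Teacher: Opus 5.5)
Your proof is correct and follows essentially the route the paper intends. The paper gives no written argument; it only calls the corollary immediate from the one-dimensional affine characterization combined with the Chinese-remainder reasoning behind Proposition~\ref{prop:multidim}, which is exactly your observation that coordinate cycles of pairwise coprime lengths $p_i^{k_i}$ merge into a single cycle on every finite quotient of $\ZZ_S$. Your last step (transitivity on all quotients implies ergodicity, via the $\pi$--$\lambda$/martingale argument or via unique ergodicity, the latter mirroring (3)$\Rightarrow$(2)$\Rightarrow$(1) in the proof of Theorem~\ref{thm:ergodic:1d}) soundly supplies the $\ZZ_S$-analogue of Theorem~\ref{thm:Anashin} that the paper leaves implicit.
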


\bibliographystyle{abbrv}

\bibliography{literatur}

\end{document}